\documentclass{amsproc}

\usepackage{ifpdf}
\newif\ifdvipdfmx
\dvipdfmxtrue
\ifpdf
  \usepackage{graphicx}
  \usepackage{xcolor}
  \definecolor{darkblue}{rgb}{0.05,0.10,0.45}
  \usepackage[
    colorlinks=true,
    linkcolor=darkblue,
    citecolor=darkblue,
    urlcolor=darkblue
  ]{hyperref}
\else
  \ifdvipdfmx
    \usepackage[dvipdfmx]{graphicx}
    \usepackage[dvipdfmx]{xcolor}
    \definecolor{darkblue}{rgb}{0.05,0.10,0.45}
    \usepackage[
      dvipdfmx,
      colorlinks=true,
      linkcolor=darkblue,
      citecolor=darkblue,
      urlcolor=darkblue
    ]{hyperref}
  \else
    \usepackage[dvips]{graphicx}
    \usepackage[dvips]{xcolor}
    \definecolor{darkblue}{rgb}{0.05,0.10,0.45}
    \usepackage[
      dvips,
      colorlinks=true,
      linkcolor=darkblue,
      citecolor=darkblue,
      urlcolor=darkblue
    ]{hyperref}
  \fi
\fi

\usepackage[all]{xy}
\usepackage{amssymb}
\usepackage{amscd}
\usepackage{braket}
\usepackage{enumitem}

\def\a{\alpha}
\def\b{\beta}
\def\c{\gamma}
\def\d{\delta}
\def\e{\epsilon}

\def\l{\lambda}
\def\s{\sigma}
\def\t{\tau}

\def\L{\Lambda}
\def\O{\Omega}

\def\AA{{\mathbb A}}

\def\NN{{\mathbb N}}
\def\PP{{\mathbb P}}

\def\ZZ{{\mathbb Z}}

\def\cal{\mathcal}

\def\cA{{\cal A}}

\def\cC{{\cal C}}

\def\cF{{\cal F}}

\def\cL{{\cal L}}

\def\cP{{\cal P}}

\def\cV{{\cal V}}

\def\Aut{\operatorname{Aut}}

\def\Coker{\operatorname{Coker}}

\def\CM{\operatorname {CM}}

\def\depth{\operatorname{depth}}

\def\det{\operatorname{det}}
\def\dim{\operatorname{dim}}

\def\End{\operatorname {End}}
\def\Ext{\operatorname {Ext}}

\def\GL{\operatorname {GL}}

\def\gldim{\operatorname{gldim}}

\def\grmod{\operatorname{grmod}}
\def\GrMod{\operatorname{GrMod}}

\def\GrAut{\operatorname{GrAut}}

\def\Hom{\operatorname {Hom}}

\def\id{\operatorname {id}}

\def\Im{\operatorname{Im}}

\def\ind{\operatorname {ind}}

\def\Ker{\operatorname {Ker}}

\def\mod{\operatorname{mod}}

\def\Obj{\operatorname{Obj}}

\def\Pic{\operatorname{Pic}}
\def\Proj{\operatorname{Proj}}

\def\tors{\operatorname{tors}}

\def\tails{\operatorname{tails}}

\def\uCM{\underline {\operatorname{CM}}}

\def\<{\langle}
\def\>{\rangle}

\def\NMF{\operatorname{NMF}}
\def\uNMF{\underline{\operatorname{NMF}}}
\def\LMF{\operatorname{LMF}}

\def\rnum#1{\expandafter{\romannumeral #1}}
\def\Rnum#1{\uppercase\expandafter{\romannumeral #1}}

\theoremstyle{plain}
\newtheorem{theorem}{Theorem}[section]
\newtheorem{corollary}[theorem]{Corollary}
\newtheorem{lemma}[theorem]{Lemma}
\newtheorem{proposition}[theorem]{Proposition}

\theoremstyle{definition}
\newtheorem{definition}[theorem]{Definition}
\newtheorem{example}[theorem]{Example}

\theoremstyle{remark}

\newtheorem{remark}[theorem]{Remark}

\numberwithin{equation}{section}
\renewcommand{\labelenumi}{(\arabic{enumi})}
\renewcommand{\labelenumii}{(\alph{enumii})}
\renewcommand{\theequation}{\thesection.\arabic{equation}}

\begin{document}

\title{Irreducible Noncommutative Quadrics}

\author{Izuru Mori}

\address{Department of Mathematics,
Faculty of Science,
Shizuoka University,
836 Ohya, Suruga-ku, Shizuoka 422-8529, Japan}
\email{mori.izuru@shizuoka.ac.jp}

\author{Kenta Ueyama}

\address{Department of Mathematics,
Faculty of Science,
Shinshu University,
3-1-1 Asahi, Matsumoto, Nagano 390-8621, Japan}
\email{ueyama@shinshu-u.ac.jp}

\author{Wenchao Wu}

\address{School of Mathematical Sciences,
University of Science and Technology of China,
Hefei Anhui 230026, China}
\email{wuwch20@mail.ustc.edu.cn}

\subjclass[2020]{Primary 14A22, 16S37; Secondary 16W50, 16G50}

\thanks{
The first author was supported by JSPS KAKENHI Grant Number JP25K06917.
The second author was supported by JSPS KAKENHI Grant Number JP26K06761.
}

\begin{abstract}
In this paper, we study irreducible noncommutative quadrics $S/(f)$ via noncommutative graded matrix factorizations. We show that the line modules over $S/(f)$ are described by the rulings arising from indecomposable noncommutative linear matrix factorizations of $f$ of rank $2$. We study when Zhang twists of a standard smooth irreducible noncommutative quadric are standard. Finally, by identifying all singular central Sklyanin quadrics, we prove that every smooth central Sklyanin quadric is standard.
\end{abstract}

\maketitle

\section{Introduction}
Throughout this paper, $k$ is an algebraically closed field of characteristic $0$.

Noncommutative quadric hypersurfaces (and their associated noncommutative projective schemes) are among the major objects of study in noncommutative algebraic geometry (see e.g.\ \cite{HU, HM, HMW26, IKU, MUnkp, SV, Uqh}).
The simplest non-trivial examples are noncommutative conics, which have been studied intensively in recent years; in particular, noncommutative
central conics were completely classified in \cite{HMM,HMW26}. The next
natural step is to study noncommutative quadrics.

In \cite{MUqps}, it was shown that a standard smooth noncommutative quadric $A=S/(f)$ which is a domain has a remarkable property: there exists a $3$-dimensional $\mathbb N$-graded AS-regular algebra $B$ such that $B_0$
is the path algebra $kK_2$ of the $2$-Kronecker quiver $K_2$ and
$\Proj_{nc} A\cong \Proj_{nc} B$. Hence the classification of such noncommutative quadrics is closely related to that of $3$-dimensional $\mathbb N$-graded AS-regular algebras, which are important objects in
noncommutative algebraic geometry and in representation theory of finite-dimensional algebras \cite{MM}.

On the other hand, it was also shown in \cite{MUqps} that there exists a non-standard smooth noncommutative quadric which is a domain. This raises the question of whether non-standard smooth irreducible noncommutative quadrics are common or exceptional among smooth irreducible noncommutative quadrics.

The noncommutative quadrics considered below are quotients $A=S/(f)$, where $S$ is a $4$-dimensional quantum polynomial algebra and $f\in S_2$ is a regular normal element; the precise assumptions used in this paper
will be stated in Section 3.
It is unclear to us whether one can always replace $f$ by a central element up to graded Morita equivalence, as indicated in \cite{SV}.
Moreover, it is often difficult to check whether $A$ is a domain, while it is often easier to check whether $f$ is irreducible.
For this reason, we work with normal elements $f$ which are not necessarily central, and assume that $f$ is irreducible rather than assuming that $A$ is a domain, although these two conditions may be equivalent in some situations.

Our main tool is the theory of noncommutative graded matrix factorizations developed in \cite{CCKM,MUnmf}. We use noncommutative graded matrix factorizations to study line modules over noncommutative quadrics. In particular, for a smooth irreducible noncommutative quadric, we give a clearer characterization of line modules in terms of the two rulings arising from noncommutative linear matrix factorizations of rank $2$ defined in this paper.
This provides a matrix-factorization interpretation of the rulings on a smooth noncommutative quadric considered in \cite{SV}.

The paper is organized as follows. In Section 2, we collect some definitions and preliminary results needed in this paper. In Section 3, we characterize line modules over noncommutative quadrics in terms of noncommutative graded matrix factorizations, and then prove some results that will be used later.

In Section 4, we study Zhang twists of a standard smooth noncommutative quadric and determine when the twists remain standard.
When $A=S/(f)$ is a standard smooth noncommutative quadric, $A$ has exactly
two isomorphism classes of indecomposable maximal Cohen-Macaulay modules generated in degree $0$ (or equivalently, of noncommutative linear matrix factorizations of rank $2$).
We denote them by $X$ and $Y$.
To $X$ and $Y$, respectively, we associate two families
${\mathsf L}_X$ and ${\mathsf L}_Y$ of lines on certain quadric surfaces in $\mathbb P^3$.
The following result gives a geometric criterion for
standardness of Zhang twists in terms of the induced action on these two families of lines.

\begin{theorem}[Theorem \ref{thm:switch}]
Let $A=S/(f)$ be a  standard smooth noncommutative quadric. For any $\s\in \GrAut S$ such that $\s(f)=f$ (so that $\s \in \GrAut A$),
$A^{\s}$ is standard if and only if $\s^*(\ell)\in {\mathsf L}_Y$ for every $\ell \in {\mathsf L}_Y$, and $A^{\s}$ is non-standard if and only if $\s^*(\ell)\in {\mathsf L}_X$ for every $\ell \in {\mathsf L}_Y$.
\end{theorem}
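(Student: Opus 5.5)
Throughout, ``standard'' refers to the definition of Section 3: $A=S/(f)$ is standard when its point scheme $E_A$ and its geometric data $(E,\sigma)$ satisfy the condition that the quadric part of the point scheme together with its two rulings is preserved compatibly by the induced automorphism. I will therefore prove both equivalences by describing the rulings of $A^{\s}$ and checking that condition directly.

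First I will compute the twisted data. For $\s\in\GrAut S$ with $\s(f)=f$, the Zhang twist $S^{\s}$ is again a $4$-dimensional quantum polynomial algebra, $f$ remains a regular normal element of $S^{\s}$, and $A^{\s}\cong S^{\s}/(f)$. The graded module categories are equivalent, $\GrMod A\cong\GrMod A^{\s}$, via $M\mapsto M^{\s}$. This equivalence sends maximal Cohen--Macaulay modules to maximal Cohen--Macaulay modules, modules generated in degree $0$ to modules generated in degree $0$, and line modules to line modules. Hence the two indecomposable rank-$2$ noncommutative linear matrix factorizations of $f$ over $S^{\s}$ are the twists $X^{\s}$ and $Y^{\s}$. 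At the level of matrices, twisting a matrix factorization $(\Phi,\Psi)$ over $S$ replaces its entries by $\s$-twisted entries, using the Zhang twist of noncommutative matrix factorizations. Using the line module characterization of Section 3, each ruling line attached to $M$ is the zero locus of the linear entries of a row (or column) of $\Phi_M$. So the ruling of $X^{\s}$ is $\s^*({\mathsf L}_X)$ and the ruling of $Y^{\s}$ is $\s^*({\mathsf L}_Y)$, up to the fixed identification of $\PP^3$ for $S$ and for $S^{\s}$. I expect this identification to be the main obstacle: the point scheme of $S^{\s}$ carries the automorphism $\s^{-1}$ composed with the original one, and I must check that the rulings move by $\s^*$ rather than by some power twisted by the geometric automorphism. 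I will do this by computing, degree by degree, the annihilators of the line modules $M^{\s}$ in terms of those of $M$.

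Next I will translate standardness into a statement about rulings. Because standardness is defined asymmetrically, singling out which of the two MCM modules plays the role of $Y$, $A^{\s}$ is standard exactly when the ruling attached to its distinguished module coincides with the family ${\mathsf L}_Y$ as a family of lines on the underlying quadric surfaces. Smoothness of $A$ ensures the two families ${\mathsf L}_X$ and ${\mathsf L}_Y$ are distinct. Since $\s^*$ is a linear automorphism preserving $f$, it preserves the union of the two rulings. Each family is irreducible, parametrized by a $\PP^1$ or by a connected family over the quadric pencil. Hence $\s^*$ either maps ${\mathsf L}_Y$ onto ${\mathsf L}_Y$ or maps it onto ${\mathsf L}_X$. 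Checking this dichotomy is where smoothness and irreducibility of $f$ are used: irreducibility rules out reducible quadrics, where a line could lie in both families.

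Finally I combine the two steps. If $\s^*({\mathsf L}_Y)={\mathsf L}_Y$, then the distinguished MCM module of $A^{\s}$ is $Y^{\s}$, and its ruling agrees with ${\mathsf L}_Y$, so $A^{\s}$ is standard. If instead $\s^*$ swaps the families, the roles of $X$ and $Y$ are interchanged and the standardness condition fails, so $A^{\s}$ is non-standard. Because exactly one of the two alternatives holds, both ``if and only if'' statements follow together.
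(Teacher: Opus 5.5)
There is a genuine gap, and it starts with the definition. In this paper standardness is not a condition on point schemes. For a smooth irreducible quadric with $\mathbb M=\{X,Y\}$, $A$ is standard if $\Omega X(1)\cong Y$ and $\Omega Y(1)\cong X$. It is non-standard if $\Omega X(1)\cong X$ and $\Omega Y(1)\cong Y$. This condition is symmetric under $X\leftrightarrow Y$, so there is no ``distinguished module'', and interchanging the roles of $X^\sigma$ and $Y^\sigma$ can never make standardness fail.

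So your second and third steps prove nothing. Even granting your first step (which family of lines is attached to $X^\sigma$ and to $Y^\sigma$), you never relate it to whether the syzygy operation $\Omega(-)(1)$ of $A^\sigma$ swaps or fixes $X^\sigma$ and $Y^\sigma$. A symptom is that you never use the hypothesis that $A$ is standard, although it is essential. The two families of lines can be defined in the same way for the non-standard quadric of Example~\ref{ex.coex}, and $\sigma=\id$ preserves them although that quadric is non-standard.

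Your justification of the dichotomy is also not valid as stated. $f$ is a noncommutative element, and ${\mathsf L}_X,{\mathsf L}_Y$ lie on the commutative quadrics $\cV(\det\Psi^\natural)$ and $\cV(\det\Phi^\natural)$, which need not coincide (Remark~\ref{rem.det}). So ``$\sigma$ preserves $f$'' says nothing directly about them.

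The missing ideas are the following. For the dichotomy, the paper notes that $(\sigma^{-1}(\Phi),\sigma^{-1}(\Psi))\in\LMF^2_S(f)$, so $\Coker\overline{\sigma^{-1}(\Phi)}$ is $X$ (case $(\dagger)$) or $Y$ (case $(\ddagger)$). By Lemma~\ref{lem.pp3}, $\sigma^{-1}(\Phi)=P\Phi Q$ or $P\Psi Q$ with $P,Q\in\GL_2(k)$. This gives $\sigma^*({\mathsf L}_Y)\subseteq{\mathsf L}_Y$ or $\sigma^*({\mathsf L}_Y)\subseteq{\mathsf L}_X$ respectively (Lemma~\ref{lem.dsd}), and Lemma~\ref{lem.empty} makes the two cases exclusive.

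For the link with standardness, the paper uses Corollary~\ref{cor.mq}, which rests on Proposition~\ref{prop.321} and $C(A)\cong M_2(k)\times M_2(k)$. It says $A$ is standard iff there is an exact sequence $0\to L'(-1)\to A/aA\to L\to 0$ with $L,L'$ in different rulings. Twisting such a sequence, Lemma~\ref{lem.mq2} gives $L_{\ell'}(-1)^\sigma\cong(L_{\sigma^*(\ell')})^\sigma(-1)$; the degree shift is exactly where $\sigma^*$ enters. Proposition~\ref{prop.mq1}(3) says twisting preserves ``same ruling''. Hence $(\dagger)$ gives $A^\sigma$ standard and $(\ddagger)$ gives $A^\sigma$ non-standard (Lemma~\ref{lem.dsn}).

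You could also argue directly. The twist of $(\Phi,\Psi)$ is $(\Phi,\sigma^{-1}(\Psi))\in\LMF^2_{S^\sigma}(f^\sigma)$, so $\Omega X^\sigma(1)\cong\Coker\overline{\sigma^{-1}(\Psi)}$ over $A^\sigma$. By Lemma~\ref{lem.pp3}, $A^\sigma$ is standard iff $\sigma^{-1}(\Psi)\sim\Psi$. Because $\Omega(-)(1)$ swaps $X$ and $Y$ over the standard quadric $A$, this is equivalent to $(\dagger)$.
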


In the special case of the commutative smooth quadric, the two families ${\mathsf L}_X$ and ${\mathsf L}_Y$ recover the two classical rulings on the quadric surface.
Thus the above criterion can be expressed purely in
terms of whether the induced automorphism preserves or switches these rulings. This gives the following result.

\begin{theorem}[Theorem \ref{thm.cotw}]
Let $A=k[x, y, z, w]/(xw-yz)$ be the homogeneous coordinate ring of a smooth quadric $Q=\cV(xw-yz)\subset \PP^3$.
For $\s\in \GrAut A$,
$A^{\s}$ is standard if and only if $\s^*\in \Aut Q$ preserves the rulings of $Q$,
and $A^{\s}$ is non-standard if and only if $\s^*\in \Aut Q$ switches the rulings of $Q$.
\end{theorem}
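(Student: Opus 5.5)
The plan is to deduce this from Theorem \ref{thm:switch} by specializing to $S=k[x,y,z,w]$ and $f=xw-yz$, and then identifying the families ${\mathsf L}_X$ and ${\mathsf L}_Y$ with the two classical rulings of $Q$.

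First I reduce to automorphisms of $S$ fixing $f$. Since $A$ is generated in degree $1$, every $\s\in\GrAut A$ is determined by a linear map on $A_1=S_1$, so it lifts to some $\tilde\s\in\GrAut S$. Because $\tilde\s$ preserves the ideal $(f)$ and $f$ spans $(f)_2$, we get $\tilde\s(f)=cf$ for some $c\in k^\times$. Since $k$ is algebraically closed, choose $\lambda$ with $\lambda^2=c$ and replace $\tilde\s$ by $\lambda^{-1}\tilde\s$. This new lift fixes $f$. It induces on $A$ the automorphism $\lambda^{-1}\s$ rather than $\s$. This is harmless: for a scalar $\lambda$ acting by $\lambda^{n}$ in degree $n$, the Zhang twist $A^{\lambda^{-1}\s}$ is isomorphic to $A^{\s}$ (via $a\mapsto \lambda^{\deg a}a$ up to the usual normalization), so standardness is unaffected. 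Scalars also act trivially on $\PP^3$, so $\s^*$ is unchanged. We also need that $A$ is a standard smooth noncommutative quadric in the sense of Section 3; this is the commutative smooth quadric case and should be immediate from the definitions, since $\CM$ has exactly the two classical spinor modules.

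Next I identify the families. The rank $2$ linear matrix factorizations of $xw-yz$ are
$$\Phi=\begin{pmatrix} x & y\\ z & w\end{pmatrix},\quad \Psi=\begin{pmatrix} w & -y\\ -z & x\end{pmatrix},$$
together with the transposed pair. These give $X$ and $Y$, namely the cokernels of $\Phi$ and of $\Phi^T$ up to syzygy. The lines attached to a matrix factorization are, I expect, the loci $\cV(a x+b y, a z+b w)$ and $\cV(a x+ bz, a y+ bw)$ for $(a:b)\in\PP^1$. In the commutative case these are exactly the two rulings of $Q$, as the paper remarks. So ${\mathsf L}_X$ is one ruling and ${\mathsf L}_Y$ is the other. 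The main point to check is that the paper's construction of ${\mathsf L}_X$ from the rank $2$ factorization really yields these lines, i.e.\ that the lines are the zero loci of the rows or columns of the matrix evaluated at points of $\PP^1$. I expect this to be the main obstacle, but it should be a direct computation from the definition in Section 4.

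Finally I apply Theorem \ref{thm:switch}. Any automorphism of $Q$ maps lines on $Q$ to lines on $Q$. It also maps each ruling, as a connected family, onto a single ruling, since two lines in the same ruling are disjoint and lines in different rulings meet. Hence $\s^*$ either preserves both rulings or swaps them. In the first case $\s^*({\mathsf L}_Y)={\mathsf L}_Y$, so $A^{\s}$ is standard. In the second case $\s^*({\mathsf L}_Y)={\mathsf L}_X$, so $A^{\s}$ is non-standard. These two cases are exhaustive and mutually exclusive, which gives both equivalences.
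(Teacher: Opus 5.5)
Your proposal is correct and is essentially the paper's proof: you specialize Theorem \ref{thm:switch} to $f=xw-yz$, check that ${\mathsf L}_Y=\{\cV(ax+bz,ay+bw)\}$ and ${\mathsf L}_X=\{\cV((a,b)\Psi)\}=\{\cV(ax+by,az+bw)\}$ are the two rulings of $Q$, and note that the $\s^*$ used there is just the induced automorphism of $Q$, since $\s^{-1}(\det\Phi)=\s^{-1}(f)=f$. The disjoint-versus-meeting argument then shows that $\s^*$ either preserves or swaps the rulings. Your rescaling of the lift so that it fixes $f$ is a step the paper glosses over and is fine, except that the isomorphism $A^{\lambda^{-1}\s}\cong A^{\s}$ is $a\mapsto\lambda^{n(n-1)/2}a$ on $A_n$, not $a\mapsto\lambda^{n}a$: the latter is an automorphism of $A$ and cannot absorb the factor $\lambda^{-ij}$ in the twisted product.
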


Finally, in Section 5, we study central Sklyanin quadrics, that is, noncommutative quadrics $S/(f)$ where $S$ is a $4$-dimensional (non-degenerate) Sklyanin algebra and $0\neq f\in Z(S)_2$. We first identify the singular central Sklyanin quadrics explicitly and show that, for each fixed $S$, there is
only one singular central Sklyanin quadric up to isomorphism. We then use this classification to show that every smooth central Sklyanin quadric is standard.
More precisely, our result is as follows.

\begin{theorem}[Theorem \ref{thm_sing}]
Let $S$ be a $4$-dimensional  (non-degenerate) Sklyanin algebra and $0\neq f \in
k\Omega_1 + k\Omega_2 $, where $\Omega_1, \Omega_2\in Z(S)_2$ are certain central elements of degree $2$.
(In this case $f$ is irreducible by Proposition \ref{prop_cenirr}.)
\begin{enumerate}
\item The following conditions are equivalent.
\begin{enumerate}
\item $S/(f)$ is singular.
\item $f$ is one of $\Omega_1$, $\d_1(\Omega_1)$, $\d_2(\Omega_1)$,
or $\d_3(\Omega_1)$ up to scalar, where $\delta_1,\delta_2,\delta_3$ are certain graded algebra automorphisms of $S$.
\item $S/(f) \cong S/(\Omega_1)$.
\end{enumerate}
\item  If $S/(f)$ is smooth, then $S/(f)$ is standard.
\end{enumerate}
\end{theorem}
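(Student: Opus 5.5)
The plan rests on the standard facts about a central Sklyanin quadric $S/(f)$, $f=\a\Omega_1+\b\Omega_2$. By Smith--Stafford, $S$ has point scheme $E\cup\{e_0,e_1,e_2,e_3\}$, where $E\subset\PP^3$ is an elliptic curve, and $E$ is the base locus of the pencil of quadrics $\{\cV(\a g_1+\b g_2)\}$ obtained from $\Omega_1,\Omega_2$ through the twisted-commutative description of $E$. Smoothness of $S/(f)$ (finite global dimension of $\tails$, equivalently $\uCM^{\ZZ}(S/(f))$ semisimple in the sense of \cite{SV,MUnmf}) will be detected by the associated quadric $Q_f\subset\PP^3$ in the pencil. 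The intended criterion is that $S/(f)$ is singular if and only if $Q_f$ is one of the four singular (rank $3$) members of the pencil. The four cones have vertices exactly the four extra points $e_0,\dots,e_3$.

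For (1), (a)$\Rightarrow$(b): if $S/(f)$ is singular, I will produce a point module, or a noncommutative linear matrix factorization of rank $1$ or of rank $2$ that decomposes. Via Section 3 this forces $Q_f$ to have rank $\le 3$. Since every member of the pencil of a smooth elliptic quartic has rank $\ge 3$, and exactly four members have rank $3$, this leaves four values of $(\a:\b)$. I then check by explicit computation that these four values are $\Omega_1,\d_1(\Omega_1),\d_2(\Omega_1),\d_3(\Omega_1)$. Here $\d_i$ are the automorphisms induced by the $2$-torsion translations of $E$, which permute the coefficients $\a,\b,\c$ and the singular members. Concretely, $\d_i$ acts on $k\Omega_1+k\Omega_2$ and permutes the four degenerate points of the pencil transitively. (b)$\Rightarrow$(c) is immediate since $\d_i\in\GrAut S$ induces $S/(\Omega_1)\cong S/(\d_i(\Omega_1))$. (c)$\Rightarrow$(a) amounts to showing directly that $S/(\Omega_1)$ is singular. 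For this I will exhibit a module of infinite projective dimension in $\tails$, for example the point module at $e_0$ or a rank-one factorization, using that $\Omega_1$ is the element whose associated quadric is a cone.

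For (2), a smooth central Sklyanin quadric has, by the results recalled in Section 4, exactly two or three types of rank-$2$ indecomposable factorizations. Standardness is characterized by $C(A)\cong k^4$, or equivalently by line modules forming two rulings. My approach is to use connectedness of the smooth locus $U=\PP^1\setminus\{4\text{ points}\}$ of the pencil. The algebra $C(S/(f))$ varies in a flat family over $U$, so its isomorphism type is constant on $U$ because a semisimple algebra is rigid. To conclude, I only need to check a single smooth member, for instance by a degeneration or explicit computation at a convenient $(\a:\b)$, or by using that $C(A)$ is commutative semisimple of dimension $4$ exactly when $Q_f$ has rank $4$.

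The main obstacle is the precise link between the rank of the commutative quadric $Q_f$ and the singularity and standardness of $S/(f)$. I would first try to prove this by computing the rank of the symmetric matrix attached to $f$ in the $\theta$-coordinates of $S_1$.
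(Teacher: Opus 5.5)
Your argument rests on the criterion ``$S/(f)$ singular $\iff$ $Q_f$ is a cone'', and that criterion is backwards. Nothing in Section 3 relates singularity to the rank of a commutative quadric.

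\emph{The cones are the smooth cases.} Read $Q_f$ as the form $\b_0x_0^2+\b_1x_1^2+\b_2x_2^2+\b_3x_3^2$, or any diagonal rescaling of it, which is what a coordinate identification of $k\Omega_1+k\Omega_2$ with the pencil through $E$ produces. Then the rank-$3$ members are exactly those with $\b_0\b_1\b_2\b_3=0$. By Lemma \ref{lem_del} these are $\Omega_2,\d_1(\Omega_2),\d_2(\Omega_2),\d_3(\Omega_2)$. By Lemma \ref{lem_3variables} and Proposition \ref{prop.Om2} they give \emph{standard smooth} quadrics. Meanwhile $\Omega_1=x_0^2-x_1^2-x_2^2-x_3^2$ is nondegenerate, so ``$\Omega_1$ is the element whose associated quadric is a cone'' is false. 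Geometrically, for $|\t|=\infty$, singularity means $z+\t\in E_2$, i.e.\ ${\mathsf R}_z={\mathsf R}_{-z-2\t}$. The lines $\overline{pq}$ with $p+q=z$ form a ruling of a cone only when $2z=0$, which is a different condition.

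\emph{Your proposed witnesses do not exist.} The point module at $e_0$ is $S/(x_1,x_2,x_3)S$, and $\Omega_1$ acts on it by $x_0^2\neq 0$. So it is not an $S/(\Omega_1)$-module at all; it lives over the smooth quadric with $\b_0=0$. Rank-$1$ or decomposable rank-$2$ linear factorizations cannot occur either, because $f$ is irreducible (Proposition \ref{prop_cenirr}). For irreducible $f$, singularity appears instead as $\mathbb M=\{X\}$ with $\Omega X(1)\cong X$, i.e.\ a symmetric factorization $(\Phi,\Phi)\in\LMF^2_S(f)$.

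Part (2) uses the wrong invariant. By Proposition \ref{prop.sism}, every smooth irreducible quadric has $C(A)\cong M_2(k)\times M_2(k)$, of dimension $8$ and never $k^4$. It also has exactly two objects in $\mathbb M$, whether it is standard or not. Standardness is whether $X\mapsto\Omega X(1)$ swaps those two objects. So rigidity of the isomorphism type of $C(A)$ over the smooth locus tells you nothing. A deformation argument would have to track the swap itself, e.g.\ how the odd part of $A^![(f^!)^{-1}]$ moves the central idempotents of $C(A)$, and it would still need the four singular points first.

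The missing idea is to attack the symmetric factorization directly.
\begin{enumerate}
\item Since $f$ is central, both singularity and non-standard smoothness force some $\Phi\in M_2(S_1)$ with $\Phi^2=fE_2$ (Lemma \ref{lem.isns}).
\item Write $\Phi=\sum M_ix_i$ and split the cross terms into symmetric and antisymmetric parts using the relations. This shows some $M_i$ must be scalar, and a $\d_i$ reduces to $M_0=\pm E_2$.
\item Then $M_1,M_2,M_3$ satisfy the relations of $Cl_3(k)$. When all $\b_i\neq 0$, this forces $f\in\{\Omega_1,\d_i(\Omega_1)\}$, with $\Phi$ unique up to conjugation and sign (Lemma \ref{lem_singular}).
\item The $f$ with a vanishing coefficient are the $\d_i(\Omega_2)$. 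These are handled by an explicit pair $(\Phi,\Psi)$ with $\dim\Phi\neq\dim\Psi$.
\end{enumerate}
Uniqueness of $\Phi$ gives (c)$\Rightarrow$(a). Part (2) follows because a non-standard smooth $S/(f)$ would produce such a $\Phi$ and hence be singular. This route also avoids the $|\t|=\infty$ hypothesis that your geometric inputs require.
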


Thus the Sklyanin case provides a class of noncommutative
quadrics in which smoothness forces standardness, in contrast with the
existence of non-standard smooth noncommutative quadrics in general.

\section{Preliminaries}

\subsection{Terminology and Notation}

Throughout this paper, graded algebras are assumed to be $\mathbb Z$-graded.
A {\it connected graded algebra} is an $\mathbb N$-graded algebra $A=\bigoplus_{i\in\NN}A_i$ such that $A_0=k$.

For a graded algebra $A = \bigoplus_{i \in \ZZ} A_i$,
we denote by $\GrMod A$ the category whose objects are graded right $A$-modules and whose morphisms are degree-preserving right $A$-module homomorphisms.
We denote by $\grmod A$ the full subcategory of $\GrMod A$ consisting of finitely generated modules.

 We say that $M\in \GrMod A$ is {\it locally finite} if $\dim _kM_i<\infty$ for all $i\in \ZZ$, and in this case, we define the {\it Hilbert series} of $M$ by
$
H_M(t) : = \sum_{i \in \mathbb{Z}} (\dim_{k} M_i) t^i \in \mathbb{Z}[[t,t^{-1}]].
$

For $M \in \GrMod A$ and $j\in \ZZ$, we define the {\it shift} $M(j) \in \GrMod A$ by $M(j)_i : = M_{j+i}$ for $i \in \mathbb{Z}$.
With a slight abuse of notation, we set
\[
\Ext^i_A (M,N)_0: = \Ext^i_{\GrMod A} (M, N)\ \
\text{and}\ \
\Ext^i_A (M,N):= \bigoplus_{j \in \mathbb{Z}} \Ext^i_A (M, N(j))_0
\]
for $M, N \in \GrMod A$.

Let $\cC$ be an additive category and $\cP$ a set of objects of $\cC$ closed under finite direct sums.
Then the \emph{factor category} $\cC/\cP$ has $\Obj(\cC/\cP) =\Obj(\cC)$ and $\Hom_{\cC/\cP}(M, N) = \Hom_{\cC}(M, N)/\cP(M,N)$ for $M, N\in \Obj(\cC/\cP)=\Obj(\cC)$,
where $\cP(M,N)$ is the subgroup consisting of all morphisms from $M$ to $N$ that factor through objects in $\cP$. Note that $\cC/\cP$ is also an additive category.

Let $A$ be a connected graded algebra and let $M\in \GrMod A$ be a bounded-below graded $A$-module. Since a minimal graded free resolution of $M$
\[
\xymatrix@R=0.5pc@C=3pc{
	\cdots \ar[r] &F^2 \ar[r]^{\phi^1} &F^1 \ar[r]^{\phi^0} &F^0 \ar[r]^{\e_M} &M \ar[r] &0
}
\]
is unique up to isomorphism, we may define $\Omega M:=\Ker \epsilon_M$ up to isomorphism.

We recall a nice operation on graded algebras, called {\it Zhang twist}, introduced by Zhang \cite{Zh}.
Let $A$ be a graded algebra and $\s\in \GrAut A$ a graded algebra automorphism. The \emph{Zhang twist} of $A$ by $\s$ is a graded algebra $A^\s$ where $A^\s=A$ as a graded $k$-vector space with the new multiplication
\[
a^\s b^\s=(a\s^i(b))^\s
\]
for $a^\s\in A^\s_i$ and $b^\s\in A^\s$.
Here we write $a^\s\in A^\s$ for $a\in A$ when viewed as an element of $A^\s$, and the product $a\s^i(b)$ is computed in $A$.

Let $M\in \GrMod A$. We define a graded right $A^\s$-module $M^\s$ as follows. As a graded $k$-vector space, $M^\s=M$.
For $m\in M_i$ and $a\in A$, we define the right $A^\s$-action by $m^\s a^\s := (m\s^i(a))^\s$.
Here $m^\s$ denotes the element $m\in M$ viewed as an element of $M^\s$, and the product $m\s^i(a)$ is computed in $M$.
If $\varphi:M\to N$ is a morphism in $\GrMod A$, then the same graded $k$-linear map defines a morphism $\varphi^\s:M^\s\to N^\s$ in $\GrMod A^\s$. By \cite[Theorem 3.1]{Zh}, this construction gives an equivalence
\begin{equation}\label{eZh}
(-)^\s:\GrMod A\longrightarrow \GrMod A^\s,
\quad
M\longmapsto M^\s .
\end{equation}

\subsection{Noncommutative Graded Matrix Factorizations}
In this subsection, we recall some background results on  noncommutative graded matrix factorizations obtained in \cite{MUnmf}.

\begin{definition}[{\cite[Definition 2.1]{MUnmf}}]
	Let $S$ be a graded ring and $f\in S_d$ a homogeneous element.
	A \emph{noncommutative graded (right) matrix factorization} of $f$ over $S$ of rank $r$ is a sequence of graded right $S$-module homomorphisms $\phi: = \{\phi^i:F^{i+1}\to F^i\}_{i\in \ZZ}$ , where $F^i= \bigoplus _{s=1}^rS(-m_{is})$ for some $m_{is} \in \ZZ$ for every $i\in \ZZ$, and $\phi^i\phi^{i+1}=f\cdot$.

	A morphism $\mu :\{\phi^i:F^{i+1}\to F^i\}_{i\in \ZZ}\to \{\psi^i:G^{i+1}\to G^i\}_{i\in \ZZ}$
	of noncommutative graded (right) matrix factorizations is a sequence of right $S$-module homomorphisms $\{\mu ^i:F^i\to G^i\}_{i\in \ZZ}$ such that the diagram
	\[\xymatrix@R=2pc@C=3pc{
		F^{i+1} \ar[d]_{\mu ^{i+1}} \ar[r]^{\phi^i} &F^i \ar[d]^{\mu ^{i}} \\
		G^{i+1} \ar[r]^{\psi^i} &G^{i}
	}\]
	commutes for every $i\in \ZZ$.
	The category of noncommutative graded (right) matrix factorizations is denoted by $\NMF^{\ZZ}_S(f)$.
\end{definition}

For a noncommutative graded matrix factorization $\phi=\{\phi^i:F^{i+1}\to F^i\}_{i\in \ZZ}$, we define the {\it degree shift} by $\phi(1):=\{(\phi(1))^i=\phi^{i}(1):F^{i+1}(1)\to F^{i}(1)\}_{i\in \ZZ}$, and the {\it position shift} by $\phi[1]:=\{(\phi[1])^i=\phi^{i+1}:F^{i+2}\to F^{i+1}\}_{i\in \ZZ}$.
The shifts $(1)$ and $[1]$ define autoequivalences of $\NMF^{\ZZ}_S(f)$.

\begin{remark} \label{rem.lambda}
\begin{enumerate}
\item
Let $\{\phi^i:F^{i+1}\to F^i\}_{i\in \ZZ}$ be a noncommutative graded
matrix factorization of $f$ over $S$ of rank $r$, where
$F^i=\bigoplus_{s=1}^r S(-m_{is})$.
The homomorphism $\phi^i$ is represented by the left multiplication by a matrix $\Phi^i$ whose entries are homogeneous elements of $S$, and $\Phi^i\Phi^{i+1}=fE_r$, where $E_r$ is the identity matrix of size $r$.
\item
By (1), we often write $\Phi^i$ for $\phi^i$, and
a noncommutative graded matrix factorization as $\{\Phi^i\}_{i\in \ZZ}\in \NMF^{\ZZ}_{S}(f)$ by abuse of notation.
If $f$ is regular normal, then
it follows from \cite[Theorem 4.4]{MUnmf} that
$\{\Phi^i\}_{i\in \ZZ}$ is uniquely determined by $\Phi^0$ and $\Phi^1$,
so we also simply write $(\Phi^0, \Phi^1)\in \NMF_S^{\ZZ}(f)$.
\end{enumerate}

\end{remark}

\begin{definition}[{\cite[Definition 6.3]{MUnmf}}]
	Let $S$ be a graded ring and $f\in S_d$. For a graded free module $F \in \grmod S$,
	we define $\phi_F, {}_F\phi \in \NMF^{\mathbb Z}_S(f)$
	by
	\[
	\phi_F^{2i}=\operatorname{id}_F:F(-id)\longrightarrow F(-id),
	\qquad
	\phi_F^{2i+1}=f\cdot :F(-id-d)\longrightarrow F(-id),
	\]
	\[
	{}_F\phi^{2i}=f\cdot :F(-id-d)\longrightarrow F(-id),
	\qquad
	{}_F\phi^{2i+1}=\operatorname{id}_F :F(-id-d)\longrightarrow F(-id-d).
	\]
	We define
\begin{align*}
&\mathcal F:=\{\phi_F\mid
\text{$F$ is a graded free module in $\grmod S$}
\},\\
&\mathcal G:=\{\phi_F \oplus {}_G\phi\mid
\text{$F$ and $G$ are graded free modules in $\grmod S$}
\}
\end{align*}
	and $\underline{\NMF}_S^\mathbb{Z}(f): = \NMF_S^\mathbb{Z}(f)/\mathcal{G}$.
\end{definition}

For $\phi\in \NMF_S^{\ZZ}(f)$, we define $\Coker \phi:=\Coker \overline {\Phi^0}\in \grmod S/(f)$.

\begin{lemma}[{\cite[Proposition 6.4]{MUnmf}}] \label{lem.conm}
Let $S$ be a graded algebra and $f\in S$ a homogeneous regular normal element.	Then the functor  $\Coker : \NMF_S^{\ZZ}(f)/\cF\to \grmod S/(f)$ is fully faithful.
\end{lemma}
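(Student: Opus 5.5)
We must show that $\Coker:\NMF_S^{\ZZ}(f)/\cF\to\grmod S/(f)$ is well defined and fully faithful. Throughout, put $A=S/(f)$ and write $\overline{(-)}=(-)\otimes_S A$.

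\emph{Well-definedness.} A morphism $\mu=\{\mu^i\}:\phi\to\psi$ induces a map $\Coker\overline{\Phi^0}\to\Coker\overline{\Psi^0}$ via $\overline{\mu^0}$. For $\phi_F$ we have $\Phi^0=\id$, so $\Coker\phi_F=0$. Hence any morphism factoring through an object of $\cF$ induces the zero map, and $\Coker$ descends to the factor category $\NMF_S^{\ZZ}(f)/\cF$.

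\emph{Fullness.} Let $\alpha:\Coker\phi\to\Coker\psi$ be given. Since $F^0$ is free, $\alpha$ lifts to $\overline{\mu^0}:\overline{F^0}\to\overline{G^0}$, and then to $\mu^0:F^0\to G^0$. We need $\mu^1$ with $\Psi^0\mu^1=\mu^0\Phi^0$.
\begin{itemize}
\item Modulo $f$, the image of $\mu^0\Phi^0$ lies in $\Im\overline{\Psi^0}$. So $\mu^0\Phi^0=\Psi^0\nu+f\theta$ for some $\nu$ and some $\theta:F^1\to G^0(-d)$.
\item Using $f\cdot=\Psi^0\Psi^1$, rewrite $f\theta$ as $\Psi^0\Psi^1\theta'$, where $\theta'$ is $\theta$ twisted by the normalizing automorphism of $f$. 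The twisting is needed because $f$ is normal, not central: $f s=\nu_f(s)f$.
\item Set $\mu^1=\nu+\Psi^1\theta'$.
\end{itemize}

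\emph{Inductive extension.} To continue, use $\Psi^0\mu^1\Phi^1=\mu^0 f=f\mu^0_{\text{tw}}$ and $f=\Psi^0\Psi^1$. These give $\Psi^0(\mu^1\Phi^1-\Psi^1\mu^{2})=0$ for a suitable twisted $\mu^2$. Since $f$ is regular, $\Psi^0$ is injective, so $\mu^2$ is forced, and the construction propagates. Negative indices are handled symmetrically using $\Phi^{-1}\Phi^0=f$. Uniqueness of factorizations determined by $(\Phi^0,\Phi^1)$ (Remark \ref{rem.lambda}) organizes this bookkeeping.

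\emph{Faithfulness.} Suppose $\mu$ induces zero on cokernels. Then $\overline{\mu^0}$ maps $\overline{F^0}$ into $\Im\overline{\Psi^0}$, so by freeness
\[
\mu^0=\Psi^0 h+f k=\Psi^0\bigl(h+\Psi^1k'\bigr)=\Psi^0 s
\]
for some $s:F^0\to G^1$. We then want to show $\mu$ factors through $\phi_{F^0}$ up to position and degree shifts. The map $\phi\to\phi_{F^0}$ is given by $\Phi^0$ at position $0$ and the identity at position $1$ (suitably extended). The map $\phi_{F^0}\to\psi$ is given by $\Psi^0 s=\mu^0$ at position $0$ and $s$ at position $1$. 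One checks that the composite has components $\mu^0$ and $s\Phi^0$, and $s\Phi^0=\mu^1$ because $\Psi^0 s\Phi^0=\mu^0\Phi^0=\Psi^0\mu^1$ and $\Psi^0$ is injective. The remaining components agree by the uniqueness coming from regularity, again with twists by the normalizing automorphism at alternate positions.

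\emph{Main obstacle.} The hardest part is keeping track of the twisting by the normalizing automorphism of $f$ in all these identities. The tool that resolves it each time is injectivity of the maps $\Psi^i$, which follows from regularity of $f$.
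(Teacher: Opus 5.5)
The paper gives no argument for this lemma; it only quotes it from \cite[Proposition 6.4]{MUnmf}. Your proposal is a correct self-contained proof along the standard lines: for fullness you lift $\alpha$ to a compatible pair $(\mu^0,\mu^1)$ and extend it to all positions using injectivity of the $\psi^i$ and the normalizing automorphism of $f$; for faithfulness you write $\mu^0=\psi^0 s$ and factor $\mu$ through $\phi_{F^0}$, with $s\Phi^0=\mu^1$ by injectivity of $\psi^0$.

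There are two small slips, neither of which affects validity. First, the morphism $\phi\to\phi_{F^0}$ should be $\id_{F^0}$ at position $0$ and $\Phi^0$ at position $1$. You state these reversed, although your composite computation uses the correct version. Second, the twist in the fullness step is not needed: $f\theta$ is already $\Psi^0\Psi^1\theta$ once $\theta$ is regarded as a map into $G^2=G^0(-d)$. In fact the whole detour can be dropped. Since $fG^0=\psi^0\psi^1(G^2)\subseteq\Psi^0G^1$, you have $\Coker\overline{\Psi^0}=G^0/\Psi^0G^1$, so $\mu^0\Phi^0$ lifts through $\Psi^0$ directly because $F^1$ is free.
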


Furthermore, we define
\[
\NMF^{0}_{S}(f):= \left\{ \{\phi^i:F^{i+1}\to F^i\}_{i\in\ZZ}\in \NMF^{\ZZ}_{S}(f)\ \middle|\ F^0 \text{ is generated in degree } 0 \right\}.
\]

\begin{definition} A noncommutative graded matrix factorization $\phi\in \NMF^0_S(f)$ is called a {\it noncommutative linear matrix factorization} if all the entries of $\Phi^i$ are in $S_1$.
The full subcategory of $\NMF^0_S(f)$ consisting of noncommutative linear matrix factorizations (of rank $r$) is denoted by $\LMF_S(f)$ (by $\LMF^r_S(f)$).
\end{definition}

	Let $S$ be a graded algebra.  For $\Phi, \Psi\in M_r(S)$, we define $\Phi\sim \Psi$ if there are $P, Q\in \GL_r(k)$ such that $\Psi=P\Phi Q$.

	\begin{lemma} \label{lem.pp3}  Let $S$ be a graded algebra and $f\in S$ a homogeneous regular normal element.	Then the natural functor $\LMF_S(f)\to \NMF_S^{\ZZ}(f)/\cF$ is fully faithful.  Moreover, for $\phi, \psi\in \LMF_S^r(f)$, the following are equivalent:
		\begin{enumerate}
			\item{} $\phi\cong \psi$.
			\item{} $\Coker \phi\cong \Coker \psi$.
			\item{} $\Phi^0\sim \Psi^0$.
		\end{enumerate}
	\end{lemma}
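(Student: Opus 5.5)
The plan is to prove fully faithfulness first and then deduce the three equivalences from it together with Lemma \ref{lem.conm}.

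\emph{Fully faithful.} Let $\phi,\psi\in\LMF_S(f)$. Since $\phi$ is linear and $F^0$ is generated in degree $0$, we have $F^i=S(-i)^{r}$ for all $i$ (up to the obvious bookkeeping: $\Phi^i$ has entries in $S_1$, so $F^{i+1}$ is generated in degree $i+1$). Any morphism $\mu:\phi\to\psi$ that factors through some $\phi_F\in\cF$ is a pair of maps $\phi\to\phi_F\to\psi$. A morphism $\phi\to\phi_F$ is given by $\alpha^i:F^i\to F(-jd)$ compatible with the differentials. I will show by a degree argument that any composite $\phi\to\phi_F\to\psi$ is zero.

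The composite's component in position $0$ is $\beta^0\alpha^0$, where $\beta^0:F\to G^0=S^{r'}$ and $\alpha^0:S^r\to F$. The structure of $\phi_F$ forces compatibility relations such as $\alpha^0\Phi^0=\alpha^1$ and $\Psi^0\beta^1=\beta^0$. Since $\phi_F$ has $f$ (of degree $2$) in odd positions, tracking degrees forces the relevant components to have negative degree, hence to vanish, for the linear complexes. Concretely, $\alpha^1=f\alpha'$-type relations combined with $\Phi^0\Phi^1=f$ and linearity force the composite to be a multiple of $f$ in degree $0$, which is zero. I expect this degree bookkeeping to be the main obstacle; I would organize it by writing $\mu^i=\beta^i\alpha^i$ and using $\mu^{i}\Phi^{i}=\Psi^{i}\mu^{i+1}$ with $\mu^i$ of degree $0$ between free modules generated in degree $i$, so that $\mu^i$ is a scalar matrix.

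Fullness is immediate, since the functor is an inclusion followed by a quotient. For faithfulness, it suffices to show: a morphism $\mu$ between linear factorizations factors through $\cF$ only if $\mu=0$. Since each $\mu^i$ is a scalar matrix in $M_{r'\times r}(k)$, and factoring through $\phi_F$ implies $\mu^0$ factors as $F^0\to F\xrightarrow{}G^0$ with the odd-position maps forcing the degree-$0$ part through $\Phi^0$ or $f$, we get $\mu^0\in \Psi^0 M(S_{-1})=0$. Moreover $\mu$ is determined by $\mu^0$ and $\mu^1$ (Remark \ref{rem.lambda}(2)), and $\mu^1$ is similarly forced to vanish. Hence $\mu=0$.

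\emph{Equivalences.} For (1)$\Leftrightarrow$(2): composing with Lemma \ref{lem.conm}, $\Coker:\LMF_S(f)\to\grmod S/(f)$ is fully faithful, and a fully faithful functor reflects isomorphisms.

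For (3)$\Rightarrow$(1): if $\Psi^0=P\Phi^0Q$, set $\Psi'^1=Q^{-1}\Phi^1P^{-1}$. Then $\Psi^0\Psi'^1=P\,f\,P^{-1}$, which equals $fE_r$ since $P$ is a scalar matrix. By the uniqueness in Remark \ref{rem.lambda}(2), $\Psi'^1=\Psi^1$, and the factorization extends. The scalar matrices $\mu^0=P$, $\mu^1=Q^{-1}$, and so on, give an isomorphism; the higher $\mu^i$ are determined via the normalizing automorphism of $f$.

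For (1)$\Rightarrow$(3): an isomorphism $\mu$ has components $\mu^i$ of degree $0$ between $S(-i)^r$, so each $\mu^i\in\GL_r(k)$. From $\mu^0\Phi^0=\Psi^0\mu^1$ we get $\Psi^0=\mu^0\Phi^0(\mu^1)^{-1}$, that is, $\Phi^0\sim\Psi^0$.
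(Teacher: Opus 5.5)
Your proposal is correct and is essentially the paper's argument: factoring $\mu$ through $\phi_F$, whose position-$0$ map is the identity, gives $\mu^0=\Psi^0\beta^1\alpha^0$ and $\mu^1=\beta^1\alpha^0\Phi^0$, where $\beta^1\alpha^0$ is a degree-$0$ map $S^r\to S(-1)^s$ and hence zero; the equivalences then follow from Lemma \ref{lem.conm} together with the routine $\GL_r(k)$ bookkeeping. (Two small corrections. Your remarks about ``odd-position maps'' and ``multiples of $f$'' misdescribe the mechanism, since it is the identity in position $0$ of $\phi_F$ that does the work. Also, the fact that $\Psi^1$ is determined by $\Psi^0$ does not come from Remark \ref{rem.lambda}(2); it comes from injectivity of $\psi^0$, because the position-$(-1)$ component satisfies $\Psi^{-1}\Psi^0=fE_r$ and $f$ is regular. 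Neither point affects correctness.)
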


	\begin{proof} For $\phi, \psi\in \LMF_S(f)$, if $\mu\in\cF(\phi, \psi)$,
		then $\mu:\phi\to \psi$ factors as
\[
\xymatrix@C=2.1cm@R=0.7cm{
S(-1)^r \ar[r]^-{\phi^0=\Phi^0\cdot} \ar[d]_-{\alpha^1}
  & S^r \ar[d]^-{\alpha^0} \\
\bigoplus_i S(\ell_i) \ar[r]^-{\id} \ar[d]_-{\beta^1}
  & \bigoplus_i S(\ell_i) \ar[d]^-{\beta^0} \\
S(-1)^s \ar[r]^-{\psi^0=\Psi^0\cdot}
  & S^s
}
\]
		Since $\b^1\a^0=0:S^r\to S(-1)^s$, we have $\mu^0=\b^0\a^0=\psi^0\b^1\a^0=0$ and $\mu^1=\b^1\a^1=\b^1\a^0\phi^0=0$, so
		we can conclude that $\mu=0$.  This implies that
		$$\Hom_{\NMF_S^{\ZZ}(f)/\cF}(\phi, \psi)=\Hom_{\NMF_S^{\ZZ}(f)}(\phi, \psi)/\cF(\phi, \psi)=\Hom_{\NMF_S^{\ZZ}(f)}(\phi, \psi),$$
		so the natural functor $\LMF_S(f)\to \NMF_S^{\ZZ}(f)/\cF$ is fully faithful.

		It follows that $\phi\cong \psi$ in $\LMF_S(f)$ if and only if  $\phi\cong \psi$ in $\NMF_S^{\ZZ}(f)/\cF$ if and only if $\Coker \phi\cong \Coker \psi$ in $\grmod S/(f)$ by Lemma \ref{lem.conm}, so we obtain (1) $\Leftrightarrow$  (2).
		(2) $\Leftrightarrow$  (3) is clear.
	\end{proof}

\section{Line Modules and Rulings for Noncommutative Quadrics}

In this section, we study line modules over noncommutative quadrics via
noncommutative graded matrix factorizations. We then define rulings on
irreducible noncommutative quadrics and give a criterion, in terms of exact
sequences involving line modules and their shifts, for a smooth irreducible
noncommutative quadric to be standard or non-standard.

\subsection{Noncommutative Quadrics}

A quantum polynomial algebra, as defined below, is a noncommutative analogue of a commutative polynomial algebra in noncommutative algebraic geometry.
\begin{definition} \label{def.qpa}
	A noetherian connected graded algebra $S$ is called an $n$-dimensional {\it quantum polynomial algebra} if
	\begin{enumerate}
		\item{} $\gldim S=n$,
		\item{} $\Ext^i_S(k, S)=\begin{cases}  k(n) & \textnormal {if } i=n, \\
			0 & \textnormal {if } i\neq n, \end{cases}$
		\item{} $H_S(t)=(1-t)^{-n}$.
	\end{enumerate}
\end{definition}

By \cite{ATV2}, every 4-dimensional quantum polynomial algebra is a domain.
We will additionally assume
the following condition (*) for a 4-dimensional quantum polynomial algebra $S$, namely,
\begin{enumerate}
\item{} $S$ is Auslander-regular, and
\item{} $S$ satisfies the Cohen-Macaulay property
\end{enumerate}
in the sense of \cite{LS} to use the results from \cite{LS}.

\begin{remark}
As far as the authors know, no example is known of a 4-dimensional quantum polynomial algebra which does not satisfy condition (*).
\end{remark}

Since the noncommutative projective scheme associated to a 4-dimensional quantum polynomial algebra is regarded as a noncommutative analogue of $\PP^3$, we will make the following definition.

\begin{definition}
We say that $A=S/(f)$ is (the homogeneous coordinate ring of) a {\it noncommutative quadric} if $S$ is a 4-dimensional quantum polynomial algebra satisfying (*) and $f\in S_2$ is a (regular) normal element.

We say that a noncommutative quadric $A$ is {\it irreducible} if
$f$ is irreducible. Otherwise, $A$ is called {\it reducible}.

We say that a noncommutative quadric $A$ is {\it smooth} if the Serre quotient category $\tails A:= \grmod A/\tors A$ has finite global dimension, where $\tors A$ is the full subcategory consisting of finite-dimensional modules. Otherwise, $A$ is called {\it singular}.
\end{definition}

For a noncommutative quadric $A$,
we  define
\begin{align*}
\CM^{\ZZ}(A)&:=\{M\in \grmod A\mid \Ext_A^i(M, A)=0 \; \; \forall i\geq 1\}, \\
\CM^{0}(A)&:=\{M\in \CM^{\ZZ}(A)\mid M=M_0A\}, \\
\uCM^{\ZZ}(A)&:=\CM^{\ZZ}(A)/\cP \quad \textnormal{where $\cP$ is the set of all graded free right $A$-modules},\\
\mathbb M&:=\{M\in \ind(\CM^0(A))\mid M_0\cong k^2\}/\cong.
\end{align*}

If $A=S/(f)$ is a noncommutative quadric,  then there exists a unique regular normal element $f^!\in A_2^!$ up to scalar such that $S^! = A^!/(f^!)$ by \cite[Corollary]{ST}. We define $C(A): = A^![(f^!)^{-1}]_0$, which plays an essential role to study $A$ \cite{SV}.

\begin{lemma}  \label{lem.pp4}
Let $A=S/(f)$ be a noncommutative quadric.
\begin{enumerate}
\item{} \textnormal{(\cite[Theorems 6.5 and 6.6]{MUnmf})} There are equivalences
$\NMF^{\ZZ}_S(f)/\cF\cong \CM^{\ZZ}(A)$ and
$\uNMF^{\ZZ}_S(f)\cong \uCM^{\ZZ}(A)$.
\item{} \textnormal{(\cite[Lemma 4.13]{MUnkp})} There is an equivalence $\uCM^{0}(A) \cong \mod C(A)$.
\end{enumerate}
\end{lemma}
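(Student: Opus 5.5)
Both parts of Lemma \ref{lem.pp4} are cited from earlier work, so the plan is to show that the stated equivalences follow from results already recalled here, making the functors explicit and checking that the hypotheses of the cited theorems hold for a noncommutative quadric.

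\textbf{Part (1).} I would use the functor $\Coker:\NMF^{\ZZ}_S(f)/\cF\to \grmod A$ of Lemma \ref{lem.conm}, which is fully faithful because $f$ is regular normal. Two things remain. The first is that the image lies in $\CM^{\ZZ}(A)$. For $\phi\in\NMF^{\ZZ}_S(f)$, periodicity gives a complete resolution
\[\cdots\to \overline{F^{2}}\to\overline{F^1}\to \overline{F^0}\to \Coker\phi\to 0\]
over $A$, which is exact by regularity of $f$. Its $A$-dual is again given by the transposed factorization and is exact, so $\Ext^i_A(\Coker\phi,A)=0$ for $i\ge1$. The second is essential surjectivity. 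For $M\in\CM^{\ZZ}(A)$, the $S$-module $M$ has $\operatorname{pd}_S M=1$ by the Auslander--Buchsbaum type formula, which is available because $S$ satisfies (*) and so the results of \cite{LS} apply. A minimal $S$-resolution $0\to F^1\to F^0\to M\to 0$ has $Mf=0$, which gives $\Phi^0\Phi^1=f$ with $F^1$ of the same rank as $F^0$. Extending periodically via \cite[Theorem 4.4]{MUnmf} produces $\phi$ with $\Coker\phi\cong M$.

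For the stable version, the free $A$-modules are exactly $\Coker$ of objects of $\cG$: $\Coker\phi_F=0$ and $\Coker{}_G\phi=\overline G$. Conversely, a morphism factoring through a free $A$-module lifts to one factoring through some ${}_G\phi$. Passing to quotients then gives $\uNMF^{\ZZ}_S(f)\cong\uCM^{\ZZ}(A)$. This is \cite[Theorems 6.5, 6.6]{MUnmf}.

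\textbf{Part (2).} I would follow the Buchweitz--Orlov approach of \cite{MUnkp}. First, $\uCM^{\ZZ}(A)\cong D^b(\grmod C(A))$-type statements come from Koszul duality: since $A$ is Koszul with $A^!$ having regular normal $f^!$, Smith--Van den Bergh show that $\uCM^{\ZZ}(A)$ is equivalent to $D^b(\mod C(A))$, with $C(A)$ semisimple or at least of finite global dimension in the smooth case. Under this equivalence, the subcategory $\CM^0(A)$ of modules generated in degree $0$ (which have linear resolutions, i.e.\ come from $\LMF_S(f)$ by Lemma \ref{lem.pp3}) corresponds to the heart $\mod C(A)$. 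Concretely, one sends $M$ to the $(f^!)$-localized degree-zero part of its Koszul dual $E(M)=\bigoplus\Ext^i_A(M,k)$.

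\textbf{Main obstacle.} The hard step is verifying that linear (degree-$0$ generated) MCM modules correspond exactly to the module heart and not to complexes, that is, that $E(M)[(f^!)^{-1}]_0$ is concentrated in one degree. I would try to prove this first via linearity of the resolution of $M\in\CM^0(A)$, which holds because the factorization is linear by Lemma \ref{lem.pp3}.
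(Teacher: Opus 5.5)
\textbf{Part (1).} Your reconstruction is fine. It follows \cite[Theorems 6.5, 6.6]{MUnmf}, and the paper itself only cites those results. You do not need an Auslander--Buchsbaum formula or condition (*) to get $\operatorname{pd}_S M\le 1$. Rees' lemma gives $\Ext^{i+1}_S(M,S)\cong\Ext^{i}_A(M,A)$ up to a shift and a twist by the normalizing automorphism, so $\Ext^{\geq 2}_S(M,S)=0$, and minimality of resolutions over $S$ (finite global dimension) gives the bound. Also, the extension of $(\Phi^0,\Phi^1)$ is periodic only up to twisting by the normalizing automorphism.

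\textbf{Part (2).} Your outline does not prove this, and it puts the difficulty in the wrong place.

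The step you single out is automatic: that $E(M)[(f^!)^{-1}]_0$ is a module rather than a complex. An object of $\CM^0(A)$ without free summands is the cokernel of a linear factorization. That is \cite[Proposition 7.8]{MUnmf}, as used in Lemma \ref{lem.mphi}(1); Lemma \ref{lem.pp3} does not give it. Hence $M$ has a linear resolution, Koszul duality sends $M$ to the single $A^!$-module $E(M)$, and $(-)[(f^!)^{-1}]_0$ is exact.

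What is actually missing is the following.
\begin{enumerate}
\item You must show that the equivalence $\uCM^{\ZZ}(A)\cong D^b(\mod C(A))$ you quote agrees on $\CM^0(A)$ with $M\mapsto E(M)[(f^!)^{-1}]_0$. This requires tracking how Koszul duality turns the internal shift $(1)$ into a homological shift. Note also that \cite{SV} work with central $f$, whereas here $f$ is only normal. Without this identification you cannot transport full faithfulness.
\item More seriously, you must prove essential surjectivity. Every $N\in\mod C(A)$ has to be realized by a maximal Cohen--Macaulay module \emph{generated in degree $0$}. A general object of $\uCM^{\ZZ}(A)$ is an MCM module with generators in several degrees. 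Nothing in your sketch shows that the objects landing in the heart are stably isomorphic to objects of $\CM^0(A)$.
\end{enumerate}

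The second point is exactly what the paper relies on. In Proposition \ref{prop.sism}, the existence of two (resp.\ one) objects in $\mathbb M$ comes from the simple $C(A)$-modules. Closing the gap needs a construction producing such a degree-$0$ generated MCM module from $N$, for instance via Koszul duality applied to a suitable $A^!$-module built from $N$. Alternatively, simply cite \cite[Lemma 4.13]{MUnkp}, as the paper does.
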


\begin{proposition} \label{prop.sism}
Let $A=S/(f)$ be a noncommutative quadric.
\begin{enumerate}
\item If $A$ is smooth irreducible, then $C(A)\cong M_2(k)\times M_2(k)$, so ${\mathbb M}$ consists of two objects.
\item If $A$ is singular irreducible, then $C(A)\cong M_2(k[x]/(x^2))$, so ${\mathbb M}$ consists of one object.
\end{enumerate}
\end{proposition}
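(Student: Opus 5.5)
The plan is to pin down the $8$-dimensional algebra $C(A)$ from three ingredients: its dimension, the criterion of \cite{SV} relating smoothness of $A$ to semisimplicity of $C(A)$, and a lower bound on the dimensions of simple $C(A)$-modules coming from irreducibility of $f$. Once $C(A)$ is known, the statements about ${\mathbb M}$ follow from Lemma \ref{lem.pp4}.

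\emph{Dimension.} Since $S^!=A^!/(f^!)$ with $f^!$ regular normal of degree $2$ and $H_{S^!}(t)=(1+t)^4$, we get $H_{A^!}(t)=(1+t)^4/(1-t^2)$. The degree-zero part of the localization $A^![(f^!)^{-1}]$ is the direct limit of $A^!_{2j}$ under multiplication by $f^!$. Hence
\[
\dim_k C(A)=\dim_k A^!_{2j}\ \ (j\gg 0)=1+6+1=8 .
\]

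\emph{No one-dimensional simple modules.} Under Lemma \ref{lem.pp4}(2) and Lemma \ref{lem.pp4}(1), a $C(A)$-module of dimension $d$ should correspond to a non-free $M\in\CM^0(A)$ with $\dim_k M_0=d$. I expect this to follow from the construction in \cite{MUnkp}, where the $C(A)$-module is read off from $M_0$; this needs checking. By Lemma \ref{lem.pp3}, such an $M$ is then the cokernel of a noncommutative linear matrix factorization of rank $d$. If $C(A)$ had a module of dimension $1$, we would get $(u,v)\in\LMF^1_S(f)$ with $u,v\in S_1$ and $uv=f$. This contradicts irreducibility of $f$. So every simple $C(A)$-module has dimension $\ge 2$, and every simple factor of $C(A)/J(C(A))$ is $M_m(k)$ with $m\ge 2$.

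\emph{Smooth case.} By \cite{SV}, $\tails A$ has finite global dimension if and only if $C(A)$ is semisimple. So when $A$ is smooth, $C(A)\cong\prod M_{m_i}(k)$ with $\sum m_i^2=8$ and every $m_i\ge 2$. The only solution is $4+4$, so $C(A)\cong M_2(k)\times M_2(k)$. Its indecomposable modules are the two $2$-dimensional simples, so ${\mathbb M}$ has exactly two objects.

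\emph{Singular case.} Now $J:=J(C(A))\neq 0$, so $\dim_k C(A)/J<8$. Since $C(A)/J$ is a product of $M_m(k)$ with $m\ge 2$, the only possibility is $C(A)/J\cong M_2(k)$. There is then a unique simple module $T$, with $\dim_k T=2$, and $C(A)/J\cong T^{2}$ as right modules. Lifting to projective covers gives $C(A)\cong P(T)^{2}$, hence $C(A)\cong M_2(E)$ with $E=\End(P(T))$. The algebra $E$ is local of dimension $8/4=2$, so $E\cong k[x]/(x^2)$. Via Morita equivalence with $k[x]/(x^2)$, the indecomposable $C(A)$-modules are $T$ (dimension $2$) and $P(T)$ (dimension $4$). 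Thus ${\mathbb M}$ has exactly one object.

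The step I expect to be the main obstacle is importing the smoothness criterion from \cite{SV} under the present hypotheses: $f$ is only normal, not central, and $S$ is assumed to satisfy $(*)$. I would first check whether the argument in \cite{SV} (or the equivalences of \cite{MUnkp}) works verbatim for normal $f$. If it does not, I would instead derive it from Buchweitz–Orlov type equivalences, namely $\uCM^{\ZZ}(A)\cong D^b(\mod C(A))$ together with Orlov's semiorthogonal decomposition relating this category to $D^b(\tails A)$. Dimension-matching in Lemma \ref{lem.pp4}(2) is a secondary point to verify.
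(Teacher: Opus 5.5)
Your proposal is correct and is essentially the paper's argument: $\dim_k C(A)=8$, every simple $C(A)$-module has dimension at least $2$ because $f$ is irreducible, and a Wedderburn/projective-cover count then leaves only $M_2(k)\times M_2(k)$ (semisimple, i.e.\ smooth) or $M_2(k[x]/(x^2))$ (singular), with ${\mathbb M}$ read off via Lemma \ref{lem.pp4}(2). The inputs you reconstruct or worry about are exactly the ones the paper cites from \cite{MUnkp}: Lemma 4.13(1) for the dimension, Lemma 5.11 for the bound on simples, and Theorem 5.5 for ``smooth iff $C(A)$ semisimple'', which the paper applies directly for normal $f$, so no Buchweitz--Orlov detour is needed.
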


\begin{proof}
Let $C(A) = \bigoplus^{s}_{i=1} P_i^{m_i}$ be an indecomposable decomposition of $C(A)$
($P_i \ncong P_j$ for $i \neq j$).
Then every simple module is given by $S_i = P_i/P_i J_{C(A)}$ for $i= 1, \dots, s$, where $J_{C(A)}$ is the Jacobson radical of $C(A)$.
Since $k$ is algebraically closed,
we have
\begin{align*}
\dim_k C(A) &= \sum_{i=1}^s m_i(\dim_k P_i) = \sum_{i=1}^s m_i(\dim_k S_i + \dim_k P_i J_{C(A)})\\
&=\sum_{i=1}^s m_i^2  + \sum_{i=1}^s m_i\dim_k P_i J_{C(A)}.
\end{align*}
Since $f$ is irreducible, $m_i=\dim_k S_i\geq 2$ by \cite[Lemma 5.11]{MUnkp}.  Since  $\dim_k C(A)=8$ by \cite[Lemma 4.13 (1)]{MUnkp},
one of the following two cases occurs:

(a) $s=2$ and $m_1 = m_2 = 2$;
In this case, $P_i J_{C(A)}=0$ for any $i$, so $J_{C(A)}=0$.
Thus $C(A)$ is semisimple. Since $k$ is algebraically closed, $C(A) \cong M_2(k) \times M_2(k)$.

(b) $s=1$ and $m_1 = 2$;
In this case, $C(A) = P_1^2$ in $\mod C(A)$.
Let $\Lambda = \End_{C(A)}(P_1)$. Then $\Lambda$ is a local ring and $ C(A) \cong \End_{C(A)}(P_1^2) \cong M_2(\Lambda)$.
Since $\dim_k \Lambda = (\dim_k C(A))/4 = 2$, we have $\Lambda \cong k[x]/(x^2)$, so $C(A) \cong M_2(k[x]/(x^2))$.

Since $A$ is smooth if and only if $C(A)$ is semisimple  by \cite[Theorem 5.5]{MUnkp},
the result follows from Lemma \ref{lem.pp4} (2).
\end{proof}

\begin{lemma} \label{lem.mphi}
Let $A=S/(f)$ be a noncommutative quadric.
\begin{enumerate}
\item{} $X\in \mathbb M$ if and only if there exists an indecomposable
$\phi\in \LMF^2_S(f)$ unique up to isomorphism such that $X\cong \Coker \phi$.
\item For $X\in \mathbb M$, we have $\Omega ^iX(i) \in \mathbb M$ for every $i\in \ZZ$.
\item If $A$ is irreducible, then
$\Omega ^2X(2)\cong X$ for $X\in \mathbb M$.
\end{enumerate}
\end{lemma}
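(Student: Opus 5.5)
For (1), I will combine Lemma \ref{lem.pp4}(1) with Lemma \ref{lem.pp3}. Let $X\in\mathbb M$. By Lemma \ref{lem.pp4}(1), $X\cong\Coker\phi$ for some $\phi\in\NMF^{\ZZ}_S(f)$. Taking $\phi$ minimal, I may write $F^0=S^2$, since $X=X_0A$ and $X_0\cong k^2$.

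The main task is to show that $\phi$ is linear of rank $2$, i.e.\ $F^1=S(-1)^2$ and $F^2=S(-2)^2$.
\begin{itemize}
\item The kernel of $A^2\to X$ is $\Omega X$, and it has no component in degree $0$.
\item Since $\Phi^0\Phi^1=fE_r$ with $\deg f=2$, every generator of $F^1$ has degree $1$ or $2$.
\item A generator of degree $2$ would correspond to an entry of $\Phi^0$ of degree $2$. If I split off the trivial summands $\phi_F,{}_G\phi$, then minimality forces the entries of $\Phi^1$ to lie in $S_1$ and those of $\Phi^0$ to lie in $S_1$.
\item To make this precise, I will use that the rank of $\phi$ equals the rank of $F^0$. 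The determinant/degree count $\sum\deg(\text{columns of }\Phi^0)+\sum\deg(\text{columns of }\Phi^1)=2r$ then forces all degrees to equal $1$.
\end{itemize}
This gives $\phi\in\LMF^2_S(f)$. The map $\Coker$ is fully faithful (Lemma \ref{lem.conm}), so indecomposability of $X$ gives indecomposability of $\phi$, and uniqueness of $\phi$ follows from Lemma \ref{lem.pp3}.

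Conversely, let $\phi\in\LMF^2_S(f)$ be indecomposable. Then $\Coker\phi$ is generated by its degree-$0$ part $k^2$ and lies in $\CM^{\ZZ}(A)$ by Lemma \ref{lem.pp4}(1). By full faithfulness of $\LMF_S(f)\to\NMF/\cF\to\CM^{\ZZ}(A)$ (Lemma \ref{lem.pp3}), its endomorphism ring is local, so $\Coker\phi$ is indecomposable. I also need $(\Coker\phi)_0\cong k^2$. Linear entries give no relations in degree $0$, so this holds.

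For (2), I will use $\phi[1](1)$, whose cokernel is $\Omega X(1)$. Indeed, the minimal resolution of $\Coker\phi$ over $A$ is the periodic complex $\cdots\to A(-2)^2\xrightarrow{\overline{\Phi^1}}A(-1)^2\xrightarrow{\overline{\Phi^0}}A^2$ (cf.\ \cite{MUnmf}). Hence $\Omega X\cong\Coker\overline{\Phi^1}(-1)$, so $\Omega X(1)\cong\Coker\phi[1](1)$. The factorization $\phi[1](1)$ is again linear of rank $2$. It is indecomposable because the shifts $[1]$ and $(1)$ are autoequivalences. By (1), $\Omega X(1)\in\mathbb M$. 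Iterating this gives the result for $i\geq0$. For $i<0$ I will use $\phi[-1](-1)$ in the same way, interpreting $\Omega^{-1}$ through the cosyzygy; this is legitimate since $\Omega$ is an autoequivalence on $\uCM^{\ZZ}(A)$.

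For (3), $\Omega^2X(2)=\Coker\phi[2](2)$, and $\Phi^2$ satisfies $\Phi^2=\nu(\Phi^0)$ up to the Nakayama-type automorphism induced by normality of $f$. By (2), both $X$ and $\Omega^2X(2)$ lie in $\mathbb M$, and they become isomorphic in $\uCM^0(A)\cong\mod C(A)$ by the periodicity of \cite{MUnmf} transported via Lemma \ref{lem.pp4}(2). The main obstacle is pinning down exactly which object $\Omega^2X(2)$ is when $\mathbb M$ has two objects. By Proposition \ref{prop.sism}, in the irreducible case $\mathbb M$ has one or two objects. The case of one object is automatic. With two objects $X,Y$, I must exclude the possibility that $\Omega^2$ swaps them. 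The plan is to show that $\Omega X(1)\not\cong X$. Granting this, $\Omega X(1)\cong Y$ and $\Omega Y(1)\cong X$, and hence $\Omega^2X(2)\cong X$. To prove $\Omega X(1)\not\cong X$, I will assume $\Omega X(1)\cong X$, so that $\Phi^1\sim\Phi^0$ and hence $\Phi^0 Q\Phi^0 P=f E_2$ up to scalars. I expect this to contradict the dimension count $\dim_k C(A)=8$ with two simple $C(A)$-modules $S_X,S_Y$. The cleaner route is via the simple $C(A)$-modules: $\Omega(-)(1)$ corresponds to an auto-equivalence of $\mod C(A)$ whose square is the identity. If I cannot carry out the contradiction argument directly, I will fall back on this route.
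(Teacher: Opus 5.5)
Your parts (1) and (2) follow the paper. Part (1) is the linearity argument from the proof of \cite[Proposition 7.8]{MUnmf} combined with Lemma \ref{lem.pp3}, and part (2) is exactly the passage $\phi\mapsto\phi[1](1)$. One correction in (1): your identity ``sum of degrees $=2r$'' holds automatically and proves nothing. What forces linearity is this: since $X$ has no free summand, no trivial summand can be split off. Hence every column of $\Phi^0$ and every row of $\Phi^1$ has degree $\geq 1$, and the two degrees attached to each index add up to $2$.

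The genuine gap is in (3). Your key step is to show $\Omega X(1)\not\cong X$ when $\mathbb M=\{X,Y\}$, and this is false in general. The isomorphism $\Omega X(1)\cong X$ is precisely the paper's definition of a non-standard quadric, and such quadrics exist. In Example \ref{ex.coex}, $\phi=(\Phi,\sigma^{-1}(\Psi))\in\LMF^2_{S^\sigma}(f^\sigma)$ satisfies $\sigma^{-1}(\Psi)\sim\Phi$. Hence $\Omega X(1)\cong\Coker\overline{\sigma^{-1}(\Psi)}\cong\Coker\overline{\Phi}=X$ by Lemma \ref{lem.pp3}. So no contradiction with $\dim_k C(A)=8$ can be reached. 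The step also targets the harmless case: if $\Omega X(1)\cong X$ and $\Omega Y(1)\cong Y$, then $\Omega^2X(2)\cong X$ holds trivially.

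Your other justifications do not close the gap either. For normal, non-central $f$ you only have $\Phi^2=\nu(\Phi^0)$. The $\nu$-twist need not fix the isomorphism class of $X$ in general, and this is exactly where irreducibility has to enter. Your fallback asserts that the induced autoequivalence of $\mod C(A)$ squares to the identity, which is the statement you are trying to prove.

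The missing idea is a counting argument, which is what the paper uses. By (2) with $i=1$ and $i=-1$, the assignment $X\mapsto\Omega X(1)$ maps $\mathbb M$ to itself, with inverse $X\mapsto\Omega^{-1}X(-1)$. Injectivity also follows from (1), because $[1](1)$ is an autoequivalence of $\LMF^2_S(f)$ and isomorphism classes in $\mathbb M$ correspond to isomorphism classes of indecomposable objects there. So $X\mapsto\Omega X(1)$ is a permutation of $\mathbb M$. When $f$ is irreducible, Proposition \ref{prop.sism} gives $|\mathbb M|\le 2$, and every permutation of a set with at most two elements is an involution. Therefore $\Omega^2X(2)\cong X$ in both cases: when $\Omega(-)(1)$ swaps $X$ and $Y$ (standard) and when it fixes them (non-standard).
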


\begin{proof}  (1) If $X\in \mathbb{M}\subset \CM^0(A)$, then $X$ has no free summand, so $X\cong \Coker \phi$ for some $\phi\in \LMF_{S}^2(f)$ by the proof of \cite[Proposition 7.8]{MUnmf} (1).
By Lemmas \ref{lem.pp3} and  \ref{lem.pp4} (1), the functor $\LMF_S^2(f)\to \CM^0(A)$ is fully faithful, so $\phi$ is indecomposable. The uniqueness follows from Lemma \ref{lem.pp3}.  The converse is clear.

(2) Since $
\phi\in \LMF^2_S(f)$ is an (indecomposable) noncommutative linear matrix factorization if and only if
so is $\phi[1](1)\in \LMF^2_S(f)$,
it follows from (1) that
$X=\Coker \phi
\in \mathbb M$ if and only if $\Omega ^iX(i)\cong \Coker (\phi[i](i))
\in \mathbb M$ for every $i\in \ZZ$.

(3) If  $\mathbb M=\{X\}$, then $\Omega^2 X(2) \cong X$ by (2). If  $\mathbb M=\{X, Y\}$, then either $\Omega X(1)\cong X, \Omega Y(1)\cong Y$ or $\Omega X(1)\cong Y, \Omega Y(1)\cong X$ by (2),
so $\Omega^2 X(2) \cong X$ in either case.
\end{proof}

\begin{definition}[{\cite[Section 5]{MUqps}}]
Let $A$ be a smooth irreducible noncommutative quadric and $\mathbb M=\{X, Y\}$.
We say that $A$ is {\it standard} if $\Omega X(1)\cong Y, \Omega Y(1)\cong X$, and {\it non-standard} if $\Omega X(1)\cong X, \Omega Y(1)\cong Y$.
\end{definition}

\begin{remark} \cite[Corollary 5.7]{SV} claims that every smooth noncommutative quadric $A=S/(f)$ is standard if $A$ is a domain and $f\in Z(S)_2$.  We will see later that this is not always the case (see Example \ref{ex.coex}).
One of the motivations of this paper is to show that this claim is generically true (see Theorem \ref{thm.Skl}).
\end{remark}

\subsection{Line Modules over Noncommutative Quadrics}

In noncommutative algebraic geometry, line modules defined below play an important role.

\begin{definition}
Let $A$ be a graded algebra finitely generated in degree 1.  We say that $L\in \GrMod A$ is a {\it line module} over $A$ if it is cyclic and $H_L(t)=(1-t)^{-2}$.
\end{definition}

\begin{lemma}
\label{lem.LS}
Let $S$ be a 4-dimensional quantum polynomial algebra satisfying the condition (*).
Then $L$ is a line module over $S$ if and only if $L\cong S/uS+vS$ for some linearly independent elements $u, v\in S_1$ such that $uS_1\cap vS_1\neq 0$.
\end{lemma}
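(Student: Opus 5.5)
The argument rests on the Levasseur–Stafford theory of line modules over Auslander-regular, Cohen–Macaulay algebras \cite{LS}, which is why condition (*) is assumed.

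\emph{Step 1: forward direction.} Let $L$ be a line module over $S$. Since $L$ is cyclic we write $L\cong S/I$ with $I$ a graded right ideal. The Hilbert series gives $\dim_k L_1=2$, hence $\dim_k I_1=2$; choose a basis $u,v$ of $I_1$. We compare with $M:=S/(uS+vS)$, which surjects onto $L$.

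\emph{Step 2: a degree-two relation.} $\dim_k L_2=3$ and $\dim_k S_2=10$, so $\dim_k I_2=7$. On the other hand $uS_1+vS_1\subseteq I_2$, and $\dim_k(uS_1+vS_1)=8-\dim_k(uS_1\cap vS_1)$ because $S$ is a domain. This forces $uS_1\cap vS_1\neq 0$, i.e. there are $a,b\in S_1$ with $ua=vb\ne 0$. This proves the intersection condition.

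\emph{Step 3: the main obstacle, $I=uS+vS$.} I would write down the complex
\[
0\to S(-2)\xrightarrow{\binom{a}{-b}} S(-1)^2\xrightarrow{(u\ v)} S\to M\to 0 .
\]
This complex is exact on the left since $S$ is a domain. If it is exact in the middle, then $H_M(t)=(1-2t+t^2)/(1-t)^4=(1-t)^{-2}=H_L(t)$, so $M\cong L$. Exactness in the middle means that the syzygy module of $(u\ v)$ is generated by $(a,-b)$. Using (*), I would argue with grade: $M$ has projective dimension at most $2$. By Auslander-regularity and the Cohen–Macaulay property, $\operatorname{GKdim}M\ge 4-\operatorname{pd}M\ge 2$. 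The syzygy module $K$ of $(u\ v)$ is reflexive of rank $1$. Any extra syzygy beyond $(a,-b)S$ would make $K/(a,-b)S$ a nonzero module of grade $\ge 2$. Tracking this through the long exact sequence, I expect it to force $\operatorname{GKdim}M\le 1$, which is impossible.

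Alternatively, I would cite \cite{LS} directly: for such $S$, a module $S/(uS+vS)$ with $uS_1\cap vS_1\ne0$ is Cohen–Macaulay of GK-dimension $2$ and has the resolution above. Once $H_M=H_L$ is known, the surjection $M\to L$ is an isomorphism, which finishes the forward direction.

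\emph{Step 4: converse.} Given independent $u,v$ with $ua=vb\neq0$, the module $M=S/(uS+vS)$ is cyclic. The same resolution argument from Step 3 gives $H_M(t)=(1-t)^{-2}$, so $M$ is a line module. Thus both directions reduce to the exactness statement of Step 3, which is where (*) is essential.
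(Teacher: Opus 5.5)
Steps 1 and 2 are correct, and you rightly reduce both directions to a single claim: if $u,v\in S_1$ are linearly independent and $ua=vb\neq 0$ with $a,b\in S_1$, then $M=S/(uS+vS)$ satisfies $H_M(t)=(1-t)^{-2}$. But Step 3, which carries the entire content of the lemma, is not established.

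\begin{itemize}
\item The assertion $\operatorname{pd}M\le 2$ is circular. Since $K_1=0$ and $0\neq(a,-b)\in K_2$, the module $K$ is projective (i.e.\ graded free of rank one) exactly when $K=(a,-b)S$. That is what you are trying to prove.
\item The claim that $K/(a,-b)S$ has grade $\ge 2$ is unsupported.
\item The contradiction you expect goes the wrong way. Additivity of Hilbert series along your complex gives $H_M(t)=(1-t)^{-2}+H_{K/(a,-b)S}(t)$. So extra syzygies make $M$ bigger and can never force $\operatorname{GKdim}M\le 1$.
\item Citing \cite{LS} directly does not close the gap either. \cite[Proposition 2.8]{LS} is proved for algebras containing a regular normal sequence $\Omega_1,\Omega_2\in S_2$ with $S/(\Omega_1,\Omega_2)$ a twisted homogeneous coordinate ring. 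A general quantum polynomial algebra with (*) need not have such a sequence, so you must say which properties of $S$ that proof actually uses.
\end{itemize}

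The missing ingredient is GK-criticality. Under (*), $S/wS$ is $3$-critical for every $0\neq w\in S_1$ by \cite[Corollary 1.11]{LS}. Together with $H_S(t)=(1-t)^{-4}$, this is exactly what the paper extracts from the proof in \cite{LS}, and with it the key step is short:
\begin{itemize}
\item Put $N=S/uS$, so $H_N(t)=(1-t)^{-3}$, and let $\bar v\neq 0$ be the image of $v$ in $N_1$.
\item Since $\bar vS$ is a nonzero submodule of the $3$-critical module $N$, we have $\operatorname{GKdim}\bar vS=3$.
\item Since $\bar v b=\overline{ua}=0$, the module $\bar vS$ is a quotient of $S(-1)/bS$. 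That module is again $3$-critical, so $\bar vS$ cannot be a proper quotient of it.
\item Hence $\bar vS\cong S(-1)/bS$, and $H_M(t)=H_N(t)-H_{\bar vS}(t)=(1-t)^{-3}-t(1-t)^{-3}=(1-t)^{-2}$.
\end{itemize}
This gives the converse at once. Combined with your Step 2 and the surjection $M\to L$, it also gives the forward direction, and a posteriori the exactness of your complex.
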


\begin{proof}  In \cite[Proposition 2.8]{LS}, this was proved for a graded algebra $S$ of finite global dimension containing a regular normal sequence $\Omega_1, \Omega_2\in S_2$ such that $S/(\Omega_1, \Omega_2)\cong B(E, \s, \cL)$ is a twisted homogeneous coordinate ring with $\deg \cL=4$ (see the beginning of \cite[Section 2]{LS}), however, the proof only requires that $H_S(t)=(1-t)^{-4}$ and $S/uS$ is a 3-critical module for every $0\neq u\in S_1$, which hold if $S$ is a 4-dimensional quantum polynomial algebra satisfying the condition (*)
by \cite[Corollary 1.11]{LS}.
\end{proof}

\begin{lemma} \label{lem.lphi}
Let $A=S/(f)$ be a noncommutative quadric.
For every line module $L$, there exists
$\phi\in \LMF^2_S(f)$ and $(a, b)\in \PP^1$ such that
$L\cong \Coker ((a, b)\overline {\Phi^0})$.
\end{lemma}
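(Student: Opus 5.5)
A line module over $A$ is, via $S\to A$, a line module over $S$, so Lemma \ref{lem.LS} gives $L\cong S/(uS+vS)$ with $u,v\in S_1$ linearly independent and $uS_1\cap vS_1\neq 0$. We want to build the matrix factorization $\phi$ directly from the relation between $u$ and $v$, so that $L$ appears as the cokernel of one row of $\Phi^0$.

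\emph{Step 1: the relation.} Since $L$ is an $A$-module, $f$ annihilates $\bar 1\in L$. Hence $f\in uS+vS$, and we can write $f=ua+vb$ with $a,b\in S_1$. We also pick a nonzero element $uc=-vd\in uS_1\cap vS_1$ with $c,d\in S_1$, that is, $uc+vd=0$.

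\emph{Step 2: the candidate factorization.} Set
\[
\Phi^1=\begin{pmatrix} a & c\\ b & d\end{pmatrix},
\]
and look for $\Phi^0$ with first row $(u\ v)$, so that $(u\ v)\Phi^1=(f\ 0)$. We need a second row $(u'\ v')$ with $(u'\ v')\Phi^1=(0\ f)$, that is, $u'a+v'b=0$ and $u'c+v'd=f$. Then $\Phi^0\Phi^1=fE_2$. Since $f$ is regular normal and $S$ is a domain, \cite[Theorem 4.4]{MUnmf} (Remark \ref{rem.lambda}) extends $(\Phi^0,\Phi^1)$ to a full factorization $\phi\in\LMF^2_S(f)$. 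This is legitimate: $\Phi^1\Phi^0=f'E_2$ for the twisted element $f'$, which follows from $\Phi^0\Phi^1=fE_2$ by cancellation in a domain.

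\emph{Step 3, the main obstacle: existence of the second row.} I would argue via Hilbert series and the module $M=S^2/\Phi^0 S(-1)^2$ rather than by explicit solving. An alternative route is as follows. Consider the right module $N=\Coker\bigl((u\ v)\cdot\bigr)=S/(uS+vS)=L$ together with its minimal resolution over $S$. The kernel of $(u\ v):S(-1)^2\to S$ contains the syzygies $(c,d)^T$ and, modulo $f$, the element $(a,b)^T$. Because $L$ has Hilbert series $(1-t)^{-2}$ and $H_S=(1-t)^{-4}$, the minimal $S$-resolution of $L$ is
\[
0\to S(-2)\xrightarrow{(c,d)^T} S(-1)^2\xrightarrow{(u\ v)} S\to L\to 0 .
\]
So $uS_1\cap vS_1$ is one-dimensional, and the syzygies of $(u,v)$ are generated by $(c,d)^T$. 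Now take the left-module analogue: the row space $\{(u',v')\in S_1^2 : u'a+v'b=0\}$ should be nonzero, by the analogous relation among $a,b$ coming from the symmetric role of $\Phi^1$. If it is nonzero, pick $(u',v')$ in it. Then $(u'c+v'd)$ is a degree-2 element, and $(u'\ v')\Phi^1=(0,\ g)$ with $g=u'c+v'd$. Comparing with $(u\ v)\Phi^1=(f,0)$ and using that $\det$-type arguments in a domain force $g\in kf$, we get $g=\lambda f$ with $\lambda\neq 0$, since otherwise $\Phi^1$ would have a nontrivial left kernel and rank $\le 1$. Rescaling the row gives $\Phi^0$. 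I expect this existence step to be the real difficulty. If it fails as stated, the fallback is to use $\Coker$ directly: view $L$ as an $A$-module and compute $\Omega L$ over $A$. This should be a rank-2 module in $\CM^0$-like form, generated in degree $1$, whose syzygy matrix gives $\Phi^0$ via Lemma \ref{lem.pp4}(1).

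\emph{Step 4: conclusion.} With $\phi$ constructed, the first row of $\overline{\Phi^0}$ is $(1,0)\overline{\Phi^0}=(\bar u\ \bar v)$. Therefore
\[
\Coker\bigl((1,0)\overline{\Phi^0}\bigr)=A/(\bar uA+\bar vA)=S/(uS+vS+fS)=L,
\]
which proves the statement with $(a,b)=(1,0)\in\PP^1$.
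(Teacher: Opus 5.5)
You have a genuine gap at Step 3, which you yourself flag as the crux. Steps 1, 2 and 4 are fine. Your Hilbert-series observation that $\Ker(u\ v)=(c,d)^TS$ is also correct, and it is exactly what is needed. But the argument for the second row does not work as written:
\begin{itemize}
\item Nothing you have established makes $\{(u',v')\in S_1^2 : u'a+v'b=0\}$ nonzero. The pair $(a,b)$ is only determined modulo $k(c,d)$, and no ``symmetric role'' gives $S_1a\cap S_1b\neq 0$.
\item Even granting such a $(u',v')$, there are no determinants over the noncommutative domain $S$, so nothing forces $g=u'c+v'd$ into $kf$. Your rank argument only shows $g\neq 0$.
\item The fallback via $\Omega L$ is essentially the paper's route, but you only assert that $\Omega L(1)$ ``should be'' in $\CM^0(A)$ of rank $2$. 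The paper gets this from the linear resolution of $L$ over $A$ with all terms of rank $2$ \cite[Proposition 7.2(2)]{LS}. It then still needs $\Ext^1_A(Y,A)_0=0$ for $Y=\Coker\phi$ to see that $A(-1)^2\to A$ factors as $(a,b)\overline{\Phi^0}$ with $(a,b)\in k^2$.
\end{itemize}
A smaller point: in Step 2 the correct identity is $\Phi^1\nu(\Phi^0)=fE_2$, with $\nu$ the normalizing automorphism of $f$, not $\Phi^1\Phi^0=f'E_2$. This is harmless, since you only need \cite[Theorem 4.4]{MUnmf}.

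Your direct route can be completed by using normality of $f$ to produce a \emph{right} factor first.
\begin{itemize}
\item Since $\Ker(u\ v)=(c,d)^TS$, the identity $uf=f\nu(u)=ua\nu(u)+vb\nu(u)$ gives $(f-a\nu(u),\,-b\nu(u))^T=(c,d)^T\lambda$ for some $\lambda\in S_1$.
\item Likewise $vf=f\nu(v)$ gives $(a\nu(v),\,b\nu(v)-f)^T=(c,d)^T\mu$ for some $\mu\in S_1$.
\item Hence $\Phi^1\Psi=fE_2$ with $\Psi=\begin{pmatrix}\nu(u)&\nu(v)\\ \lambda&-\mu\end{pmatrix}$.
\item Left multiplication by $\Phi^1$ on columns is injective: if $\Phi^1(s,t)^T=0$, then $fs=(u\ v)\Phi^1(s,t)^T=0$, so $s=0$, and then $t=0$.
\item Therefore $\Phi^1\Psi\nu(\Phi^1)=f\nu(\Phi^1)=\Phi^1 f$ forces $\Psi\nu(\Phi^1)=fE_2$.
\item Applying $\nu^{-1}$, and using $\nu(f)=f$, gives $\Phi^0\Phi^1=fE_2$ for $\Phi^0:=\nu^{-1}(\Psi)$. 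Its first row is exactly $(u\ v)$, so your Step 4 applies.
\end{itemize}
With this fix your proof is genuinely different from, and more elementary than, the paper's. It uses only Lemma \ref{lem.LS}, the Hilbert series of $L$ over $S$, and normality of $f$, and it produces $\phi$ explicitly with $(a,b)=(1,0)$. The paper instead relies on \cite[Proposition 7.2(2)]{LS}, the MCM/matrix factorization correspondence, and an $\Ext$-vanishing lifting argument.
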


\begin{proof} By \cite[Proposition 7.2 (2)]{LS}, every line module $L$ has a linear free resolution of the form
$$\begin{CD} \cdots @>>>A(-3)^2 @>>>A(-2)^2 @>>> A(-1)^2 @>\a>> A @>\b>> L\to 0,\end{CD}$$
so, for $X:=(\Ker \b)(1)$,
there exists
$\phi\in \LMF^2_S(f)$ such that $X\cong \Coker (\phi[1](1))\in \CM^0(A)$.
Since $Y:=\Omega ^{-1}X(-1)\cong \Coker \phi \in \CM^0(A)$, an exact sequence
$$0\to X(-1)\to A^2\to Y\to 0$$
induces an exact sequence
$$0\to \Hom_A(Y, A)_0\to\Hom_A(A^2, A)_0\to \Hom_A(X(-1), A)_0\to \Ext^1_A(Y, A)_0=0,$$
so
$$\begin{CD}\a: A(-1)^2 @>\overline{\Phi^0}\cdot>> A^2 @>(a, b)\cdot>> A \end{CD}$$
for some $(a, b)\in \PP^1$, hence $L\cong \Coker \a=\Coker ((a, b)\overline {\Phi^0})$.
\end{proof}

\begin{lemma} \label{lem.lcor}
Let $A=S/(f)$ be a noncommutative quadric, and   $\phi\in \LMF^2_S(f)$.  For $(a, b)\in \PP^1$,
$$L=(\Coker (a, b)\overline {\Phi^0})\cong A/(a\phi_1+b\phi_3)A+(a\phi_2+b\phi_4)A,$$
where $\Phi^0=\begin{pmatrix} \phi_1 & \phi_2 \\ \phi_3 & \phi_4 \end{pmatrix}$, is a line module if and only if $a\phi_1+b\phi_3, a\phi_2+b\phi_4\in S_1$ are linearly independent and $(a\phi_1+b\phi_3)S_1\cap (a\phi_2+b\phi_4)S_1\neq 0$.
\end{lemma}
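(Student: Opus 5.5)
The argument has two parts: first identify the cokernel with the cyclic quotient of $A$, then reduce the line-module condition on that quotient to Lemma \ref{lem.LS} over $S$.

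\textbf{Step 1: the cokernel.} Multiplying the row $(a,b)$ by $\overline{\Phi^0}$ gives the row
$(a\phi_1+b\phi_3,\ a\phi_2+b\phi_4)$. Set $u:=a\phi_1+b\phi_3$ and $v:=a\phi_2+b\phi_4$. The map $A(-1)^2\to A$ sends the basis vectors to $\bar u$ and $\bar v$, so its image is $\bar uA+\bar vA$. Hence
\[
\Coker ((a,b)\overline{\Phi^0})\cong A/(\bar uA+\bar vA)\cong S/(uS+vS+fS).
\]

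\textbf{Step 2: $f$ lies in $uS+vS$.} Since $\Phi^0\Phi^1=fE_2$, we get
\[
(a,b)\Phi^0\Phi^1=(af,\,bf).
\]
Choose the column of $\Phi^1$ corresponding to a nonzero coordinate of $(a,b)$, say the $j$-th. Then $fa_j=u\,c+v\,d$ for some $c,d\in S_1$, with $a_j\neq 0$. Therefore $f\in uS+vS$, and so
\[
L\cong S/(uS+vS).
\]
This is the key observation, and it is the step I would check most carefully.

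\textbf{Step 3: ($\Leftarrow$).} If $u,v$ are linearly independent and $uS_1\cap vS_1\neq 0$, then $S/(uS+vS)$ is a line module over $S$ by Lemma \ref{lem.LS}. Since it is annihilated by $f$, it is an $A$-module, cyclic with Hilbert series $(1-t)^{-2}$. Hence it is a line module over $A$.

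\textbf{Step 4: ($\Rightarrow$).} Suppose $L$ is a line module over $A$. It is cyclic with Hilbert series $(1-t)^{-2}$, and as an $S$-module it equals $S/(uS+vS)$, hence is a line module over $S$.
\begin{itemize}
\item Linear independence: if $u,v$ were dependent, then $H_L$ would have $\dim L_1\geq 3$, contradicting $\dim L_1=2$.
\item Intersection condition: Lemma \ref{lem.LS} gives $L\cong S/(u'S+v'S)$ for some $u',v'$ with $u'S_1\cap v'S_1\neq 0$. The degree-one part of the defining ideal is $\mathrm{span}(u,v)=\mathrm{span}(u',v')$, so $u'S_1+v'S_1=uS_1+vS_1$. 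Comparing dimensions, $\dim(uS_1+vS_1)<8$, and therefore $uS_1\cap vS_1\neq 0$.
\end{itemize}

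The main subtlety is Step 2: the containment $f\in uS+vS$ is exactly what lets us pass between $A$-modules and $S$-modules.
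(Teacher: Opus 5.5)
Your proof is correct and follows the paper's argument: the identity $\Phi^0\Phi^1=fE_2$ gives $f\in uS_1+vS_1$ (with normality of $f$ giving $(f)=fS\subset uS+vS$), so $L\cong S/(uS+vS)$, and Lemma \ref{lem.LS} finishes. The only differences are cosmetic. You pick a nonzero coordinate of $(a,b)$ instead of first reducing to $(a,b)=(1,0)$ by a $\GL_2(k)$ change of basis. You also spell out the forward direction, via $\dim L_1=2$ and comparing degree-two parts using that $S$ is a domain, which the paper leaves implicit in its appeal to Lemma \ref{lem.LS}.
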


\begin{proof}
For $(a, b)\neq (0, 0)$, there exists $(c, d)\neq (0, 0)$ such that $P=\begin{pmatrix} a & b \\ c & d \end{pmatrix}\in \GL_2(k)$.  Since $\{\Phi^i\}_{i\in \ZZ}\in \LMF^2_S(f)$ if and only if $\{\Psi^i\}_{i\in \ZZ}\in \LMF^2_S(f)$ where $\Psi^{2j}=P\Phi^{2j}, \Psi^{2j+1}=\Phi^{2j+1}P^{-1}$ for $j\in \ZZ$,
we may assume that $(a, b)=(1, 0)$.

Let $\phi\in \LMF^2_S(f)$.
If
$\Phi^{1}=\begin{pmatrix} \psi_1 & \psi_2 \\ \psi_3 & \psi_4 \end{pmatrix}$, then
$$\begin{pmatrix} f & 0 \\ 0 & f \end{pmatrix} = \Phi^0\Phi^{1}=\begin{pmatrix} \phi_1\psi_1+\phi_2\psi_3 & \phi_1\psi_2+\phi_2\psi _4 \\ \phi_3\psi_1+\phi_4\psi_3 & \phi_3\psi_2+\phi_4\psi_4 \end{pmatrix}$$
in $S$,
so $f=\phi_1\psi_1+\phi_2\psi_3\in \phi_1S_1+\phi_2S_1$.  Since $f\in S_2$ is normal,  $(f)=fS\subset \phi_1S+\phi_2S$, so  $A/\phi_1A+\phi_2A\cong S/\phi_1S+\phi_2S$.
By Lemma \ref{lem.LS}, $S/\phi_1S+\phi_2S$ is a line module over $S$ or equivalently a line module over $A$
if and only if $\phi_1, \phi_2\in S_1$ are linearly independent and $\phi_1S_1\cap \phi_2S_1\neq 0$.
 \end{proof}

\begin{lemma} \label{lem.irr}  Let $\phi\in \LMF^2_S(f)$ and $\Phi^0=\begin{pmatrix} \phi_1 & \phi_2 \\ \phi_3 & \phi_4 \end{pmatrix}$.
\begin{enumerate}
\item{} If $\Phi^0\sim \begin{pmatrix} \phi'_1 & \phi'_2 \\ \phi'_3 & \phi'_4 \end{pmatrix}$ such that $\phi'_j=0$ for some $j=1, 2, 3, 4$,
then $f$ is reducible.
\item{} If $f$ is irreducible, then
$$\{\phi_1, \phi_2\}, \{\phi_3, \phi_4\}, \{\phi_1, \phi_3\}, \{\phi_2, \phi_4\}$$
are linearly independent pairs.
\end{enumerate}
\end{lemma}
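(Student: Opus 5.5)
Part (1) will be proved first, and part (2) will then follow from it by contraposition.

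For (1), suppose $\Phi^0\sim \Phi'$ with $\Phi'=\begin{pmatrix} \phi'_1 & \phi'_2 \\ \phi'_3 & \phi'_4 \end{pmatrix}$ and $\phi'_j=0$ for some $j$. Write $\Phi'=P\Phi^0Q$ with $P,Q\in\GL_2(k)$. As in the proof of Lemma \ref{lem.lcor}, I will replace $\phi$ by the isomorphic factorization $\psi$ with $\Psi^{2i}=P\Phi^{2i}Q$ and $\Psi^{2i+1}=Q^{-1}\Phi^{2i+1}P^{-1}$. Since $P,Q$ have scalar entries, $\Psi^0\Psi^1=P\Phi^0\Phi^1P^{-1}=fE_2$. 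So $\psi\in\LMF^2_S(f)$ and $\Psi^0=\Phi'$, and we may assume $\Phi^0=\Phi'$ itself. Permuting rows and columns (again by permutation matrices in $\GL_2(k)$), we may further assume the zero entry is $\phi'_3=0$. Alternatively, one can handle each $j$ by the same computation below.

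Write $\Phi^1=\begin{pmatrix} \psi_1 & \psi_2 \\ \psi_3 & \psi_4 \end{pmatrix}$ with $\psi_i\in S_1$. The relation $\Phi^0\Phi^1=fE_2$ then gives $\phi'_4\psi_3=0$ and $\phi'_4\psi_4=f$. Since $f$ is regular, $f\neq 0$, so $\phi'_4\neq 0$. Because $S$ is a domain by \cite{ATV2}, $\phi'_4\psi_3=0$ forces $\psi_3=0$. The remaining identity $f=\phi'_4\psi_4$ with $\phi'_4,\psi_4\in S_1$ nonzero exhibits $f$ as a product of two elements of degree $1$, so $f$ is reducible. (Here I take "irreducible" in the sense that $f$ is not a product of two nonzero elements of positive degree.)

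For the other positions of the zero, the argument is the same. If $\phi'_1=0$, use the $(1,1)$ entry: $f=\phi'_2\psi'_3$. If $\phi'_2=0$, use $f=\phi'_1\psi'_1$. If $\phi'_4=0$, use $f=\phi'_3\psi'_2$. In each case the row of $\Phi'$ containing the zero has its other entry nonzero (otherwise that row would give $f=0$), and $f$ factors through that entry. So the domain property is not even needed; reading off the appropriate diagonal entry of $\Phi'\Psi'=fE_2$ suffices.

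For (2), assume $f$ is irreducible and suppose one of the pairs is linearly dependent, say $\{\phi_1,\phi_2\}$. Then there is $(c,d)\neq(0,0)$ with $c\phi_1+d\phi_2=0$. Complete $(c,d)^T$ to an invertible matrix $Q$ whose first column is $(c,d)^T$. Then $\Phi^0Q$ has $(1,1)$ entry equal to $0$, contradicting (1). A dependent pair $\{\phi_3,\phi_4\}$ is handled the same way. Dependent pairs $\{\phi_1,\phi_3\}$ or $\{\phi_2,\phi_4\}$ (columns) are handled by left multiplication by a suitable $P$.

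The only mildly delicate step is the reduction to the normalized matrix in (1), i.e. checking that $\sim$-equivalent matrices still occur as $\Psi^0$ of a genuine factorization. This is immediate because $P,Q$ are scalar matrices.
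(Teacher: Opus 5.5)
Your proof is correct and follows the same route as the paper. You transport the factorization along $P,Q\in\GL_2(k)$ so that $P\Phi^0Q\cdot Q^{-1}\Phi^1P^{-1}=fE_2$, read off a diagonal entry where the zero entry kills one term and exhibits $f$ as a product of two linear forms, and obtain (2) from (1) by producing a zero entry through a row or column operation. As you observe yourself, the detour showing $\psi_3=0$ via the domain property is unnecessary, since the diagonal entry gives the factorization directly.
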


\begin{proof}

Although the proof is straightforward, we include it for completeness.

(1) If there exists $P, Q\in \GL_2(k)$ such that $P\Phi^0Q=\begin{pmatrix} \phi_1' & 0 \\ \phi_3' & \phi_4' \end{pmatrix}$, then
$$\begin{pmatrix} f & 0 \\ 0 & f \end{pmatrix}=\Phi^0\Phi^{1}=P\Phi^0QQ^{-1}\Phi^{1}P^{-1}=\begin{pmatrix} \phi_1'\psi'_1 & * \\ * & * \end{pmatrix},$$
where $Q^{-1}\Phi^{1}P^{-1}=\begin{pmatrix} \psi_1' & \psi_2' \\ \psi_3' & \psi_4' \end{pmatrix}$, so $f$ is reducible.  The other cases are similar.

(2) If $\phi_1, \phi_2\in S_1$ are linearly dependent, then there exists $P\in \GL_2(k)$ such that $\Phi^0P=\begin{pmatrix} \phi_1' & 0 \\ \phi_3' & \phi_4' \end{pmatrix}$,
so
$f$ is reducible by (1).
The other cases are similar.
\end{proof}

\begin{lemma} \label{lem.cocoW}
Let $A=S/(f)$ be an irreducible noncommutative quadric.  Then $L$ is a line module over $A$ if and only if
$$L\cong \Coker ((a, b)\overline {\Phi^0})$$
for some (indecomposable)
$\phi\in \LMF^2_S(f)$
and for some $(a, b)\in \PP^1$.
\end{lemma}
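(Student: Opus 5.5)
One direction is Lemma \ref{lem.lphi}: every line module $L$ over $A$ is isomorphic to $\Coker((a,b)\overline{\Phi^0})$ for some $\phi\in\LMF^2_S(f)$ and some $(a,b)\in\PP^1$. This holds for any noncommutative quadric, irreducible or not. It remains to check two things. First, that $\phi$ may be taken indecomposable. Second, the converse: when $f$ is irreducible, every such cokernel is a line module.

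For indecomposability I would argue by contradiction. Suppose $\phi\in\LMF^2_S(f)$ is decomposable. Its summands are rank $1$ linear factorizations, so up to isomorphism $\Phi^0=\mathrm{diag}(u,u')$ with $u,u'\in S_1$. This uses Lemma \ref{lem.pp3}, since isomorphism of linear factorizations is the relation $\Phi^0\sim\Psi^0$. Then $\Phi^0$ is $\sim$-equivalent to a matrix with a zero entry, so $f$ is reducible by Lemma \ref{lem.irr}(1), contradicting irreducibility. Hence when $f$ is irreducible, every $\phi\in\LMF^2_S(f)$ is automatically indecomposable, and the parenthetical word in the statement costs nothing.

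For the converse, fix $\phi\in\LMF^2_S(f)$ and $(a,b)\in\PP^1$. As in the proof of Lemma \ref{lem.lcor}, I would replace $\Phi^0$ by $P\Phi^0$ with $P\in\GL_2(k)$ having first row $(a,b)$. This reduces to the case $(a,b)=(1,0)$, so $L\cong A/\phi_1A+\phi_2A$. By Lemma \ref{lem.lcor}, I then need two facts:
\begin{enumerate}
\item $\phi_1,\phi_2$ are linearly independent;
\item $\phi_1S_1\cap\phi_2S_1\neq0$.
\end{enumerate}
Linear independence is Lemma \ref{lem.irr}(2), applied to the new matrix $P\Phi^0$, which still belongs to a factorization in $\LMF^2_S(f)$.

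The main step is the nonzero intersection. Write $\Phi^1=(\psi_{ij})$. From $\Phi^0\Phi^1=fE_2$ in the first row, off-diagonal entry, we get $\phi_1\psi_2+\phi_2\psi_4=0$ in $S_2$. Thus $\phi_1\psi_2=-\phi_2\psi_4\in\phi_1S_1\cap\phi_2S_1$, and it suffices to show this element is nonzero. Since $S$ is a domain (by \cite{ATV2}), it is enough that $\psi_2\neq0$. If $\psi_2=0$, then $\Phi^1$ has a zero entry. Applying the argument of Lemma \ref{lem.irr}(1) to $\Phi^1$ instead of $\Phi^0$, using $\Phi^1\Phi^2=fE_2$ (valid since $\phi[1]$ is again a factorization of $f$), shows that $f$ factors as a product of two degree $1$ elements and is therefore reducible, a contradiction. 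So $\psi_2\neq0$, hence $\phi_1\psi_2\neq0$, and $L$ is a line module by Lemma \ref{lem.lcor}.
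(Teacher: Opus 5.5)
Your proof is correct and follows the paper's argument. Like the paper, you reduce to $(a,b)=(1,0)$, obtain linear independence of $\phi_1,\phi_2$ from Lemma \ref{lem.irr}, exhibit $0\neq \phi_1\psi_2=-\phi_2\psi_4\in\phi_1S_1\cap\phi_2S_1$ using that $S$ is a domain, and conclude with Lemmas \ref{lem.lphi} and \ref{lem.lcor}. You also make explicit two points the paper leaves implicit: why $\psi_2\neq 0$ (via $\Phi^1\Phi^2=fE_2$), and why every $\phi\in\LMF^2_S(f)$ is automatically indecomposable when $f$ is irreducible.
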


\begin{proof}
Without loss of generality, we may assume that $(a, b)=(1, 0)$.
By Lemmas \ref{lem.lphi} and \ref{lem.lcor}, it is enough to show that $\phi_1, \phi_2\in S_1$ are linearly independent and $\phi_1S_1\cap \phi_2S_1\neq 0$ where $\Phi^0=\begin{pmatrix} \phi_1 & \phi_2 \\ \phi_3 & \phi_4 \end{pmatrix}$.  By Lemma \ref{lem.irr}, $\phi_1, \phi_2$ are linearly independent.  Since $S$ is a domain and all $\phi_i,\psi_i$ are nonzero, we have $0\neq \phi_1\psi_2=-\phi_2\psi_4\in \phi_1S_1\cap \phi_2S_1$, so the result follows from Lemma \ref{lem.lcor}.
\end{proof}

\begin{remark} \label{rem.cocoW}
By the argument in the above lemmas,  it may be more reasonable to claim that $A/uA+vA$ is a line module over $A$ if and only if there exists $\phi\in \LMF^2_S(f)$ such that $\Phi^0=\begin{pmatrix} u & v \\ * & * \end{pmatrix}$.
Let $\psi\in \LMF^2_S(f)$ where $\Psi^0=\begin{pmatrix} u' & v' \\ * & * \end{pmatrix}$.  By Lemma \ref{lem.pp3}, $\phi\cong \psi$ if and only if $\Coker \phi
\cong \Coker \psi
$, however, this does not imply $A/uA+vA\cong A/u'A+v'A$,
so the above statement of Lemma \ref{lem.cocoW} is convenient for the purpose of this paper.
\end{remark}

Let $S=k\<x_1, \dots, x_n\>/I$ be a graded algebra.  For $\Phi=(\phi_{ij})\in M_r(S)$ such that $\phi_{ij}\in S_1$, we define
$$\dim \Phi:=\dim _k\sum_{1\leq i, j\leq r}k\phi_{ij}.$$

\begin{lemma} \label{lem.lisc1}
Let $S=k\<x_1, \dots, x_n\>/I$ be a graded algebra, and $\Phi=(\phi_{ij}), \Psi=(\psi_{ij})\in M_r(S)$ such that $\phi_{ij}, \psi_{ij}\in S_1$.
\begin{enumerate}
\item{} $\dim \Phi^t=\dim \Phi$.
\item{} If $\Phi\sim \Psi$,
then $\dim \Psi=\dim \Phi$.
\end{enumerate}
\end{lemma}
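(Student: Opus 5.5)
Both parts follow from the observation that $\dim \Phi$ is the dimension of the subspace $V_\Phi:=\sum_{i,j} k\phi_{ij}\subseteq S_1$ spanned by the entries of $\Phi$. So it suffices to compare these spans.

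For (1), the entries of $\Phi^t$ are the entries of $\Phi$ in different positions, so $V_{\Phi^t}=V_\Phi$ and hence $\dim \Phi^t=\dim \Phi$.

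For (2), suppose $\Psi=P\Phi Q$ with $P=(p_{ij}),Q=(q_{ij})\in \GL_r(k)$. Each entry is
\[
\psi_{ij}=\sum_{l,m} p_{il}\,\phi_{lm}\,q_{mj}.
\]
The $p_{il}$ and $q_{mj}$ are scalars in $S_0=k$, which is central in $S$. Hence $\psi_{ij}$ is a $k$-linear combination of the $\phi_{lm}$, and $V_\Psi\subseteq V_\Phi$. We also have $\Phi=P^{-1}\Psi Q^{-1}$ with $P^{-1},Q^{-1}\in\GL_r(k)$. The same argument then gives $V_\Phi\subseteq V_\Psi$. Therefore $V_\Phi=V_\Psi$, and $\dim\Psi=\dim\Phi$.

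The only point that needs care is that the scalar entries of $P$ and $Q$ commute with elements of $S_1$, even though $S$ is noncommutative. This holds because $S$ is a $k$-algebra, so $k$ is central. With that, the argument is a direct linear-algebra check.
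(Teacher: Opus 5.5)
Your proof is correct, and it is slightly more complete than the paper's, which only treats $r=2$. The underlying idea is the same in both: the entries of $P\Phi Q$ are $k$-linear combinations of the entries of $\Phi$, because $k$ is central in $S$.

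The paper handles left multiplication by observing that the vector of entries $(\phi_1,\phi_3,\phi_2,\phi_4)^t$ of $\Phi$ is carried to the vector of entries of $P\Phi$ by the invertible matrix $\begin{pmatrix} P & 0\\ 0 & P\end{pmatrix}\in\GL_4(k)$. It then reduces right multiplication to left multiplication using part (1):
\[
\dim P\Phi Q=\dim\Phi Q=\dim(\Phi Q)^t=\dim Q^t\Phi^t=\dim\Phi^t=\dim\Phi .
\]
In that chain, the identity $(\Phi Q)^t=Q^t\Phi^t$ uses centrality of scalars, but only implicitly.

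You instead compare the spans $V_\Phi$ and $V_\Psi$ directly. The entry formula gives $V_\Psi\subseteq V_\Phi$, and $\Phi=P^{-1}\Psi Q^{-1}$ gives the reverse inclusion. This handles both sides at once, avoids the transpose detour, and works for every $r$, matching the generality of the statement.
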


\begin{proof} (1) Clear.

(2) We will prove the case $r=2$, which is needed in this paper.
If $\Phi = \begin{pmatrix} \phi_1 & \phi_2 \\ \phi_3 &\phi_4 \end{pmatrix}$ and $P = \begin{pmatrix} a & b \\ c & d \end{pmatrix}\in \GL_2(k)$, then
$$P\Phi=\begin{pmatrix} a\phi_1+b\phi_3 & a\phi_2+b\phi_4 \\ c\phi_1+d\phi_3 & c\phi_2+d\phi_4 \end{pmatrix},$$
so
$$\begin{pmatrix} a\phi_1+b\phi_3 \\ c\phi_1+d\phi_3\\ a\phi_2+b\phi_4  \\ c\phi_2+d\phi_4 \end{pmatrix}=\begin{pmatrix} P & 0 \\ 0 & P \end{pmatrix}\begin{pmatrix} \phi_1 \\ \phi_3 \\ \phi_2 \\ \phi_4 \end{pmatrix}.$$
Since $\begin{pmatrix} P & 0 \\ 0 & P \end{pmatrix}\in \GL_4(k)$, we have $\dim P\Phi=\dim \Phi$.   For $Q\in \GL_2(k)$,
$$\dim P\Phi Q=\dim \Phi Q=\dim (\Phi Q)^t=\dim Q^t\Phi^t=\dim \Phi^t=\dim \Phi$$
by (1).
\end{proof}

\begin{lemma} \label{lem.4con}
Let $A=S/(f)$ be an irreducible noncommutative quadric.  If $\phi\in \LMF^2_S(f)$, then $\dim \Phi^0 \geq 3$.
\end{lemma}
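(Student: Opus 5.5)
We argue by contradiction: assume $\dim \Phi^0\leq 2$ and show that $\Phi^0$ is equivalent under $\sim$ to a matrix with a zero entry, so that $f$ is reducible by Lemma \ref{lem.irr} (1). Write $\Phi^0=\begin{pmatrix} \phi_1 & \phi_2 \\ \phi_3 & \phi_4 \end{pmatrix}$. By Lemma \ref{lem.irr} (2), $\phi_1,\phi_2$ are linearly independent, so $\dim \Phi^0\geq 2$. It therefore suffices to rule out $\dim\Phi^0=2$, in which case every entry lies in $V:=k\phi_1+k\phi_2$.

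Assume $\dim\Phi^0=2$ and choose a basis $u,v$ of $V$. Then $\Phi^0=uM+vN$ with $M,N\in M_2(k)$. A zero entry in $P\Phi^0Q$ means that there are nonzero vectors $p,q\in k^2$ with $p^tMq=0$ and $p^tNq=0$. So we need $p\neq 0$ such that $p^tM$ and $p^tN$ are linearly dependent in $(k^2)^t$: once they span at most a line, we can take $q\neq 0$ orthogonal to both.

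We obtain such a $p$ as follows. The row vectors $p^tM,p^tN$ are dependent iff the $2\times 2$ matrix with rows $p^tM$ and $p^tN$ has zero determinant. This determinant is a homogeneous quadratic form in $p=(p_1,p_2)$, and since $k$ is algebraically closed it has a nonzero zero unless it vanishes identically. If it vanishes identically, every $p$ works. Hence $P\Phi^0Q$ has a zero entry for suitable $P,Q\in\GL_2(k)$ (take $p^t$ as the first row of $P$ and $q$ as the first column of $Q$). Lemma \ref{lem.irr} (1) then says $f$ is reducible, a contradiction.

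The only subtle point is the degenerate case. If $p^tM=p^tN=0$, the entire row vanishes, and the conclusion holds even more directly. If $p^tM$ and $p^tN$ are merely proportional, the orthogonal $q$ exists and gives a zero entry. Both cases are covered, so no real obstacle remains beyond this linear-algebra reduction and the algebraic closedness of $k$ used to solve a binary quadratic.
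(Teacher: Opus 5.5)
Your proof is correct and is essentially the paper's argument. Assuming $\dim \Phi^0\le 2$, both arguments find $P,Q\in\GL_2(k)$ such that $P\Phi^0Q$ has a zero entry, by solving a quadratic over the algebraically closed field $k$, and then apply Lemma \ref{lem.irr}(1). The only difference is presentation: the paper kills the $(2,2)$ entry with explicit unipotent matrices and needs $\beta\neq 0$ (from Lemma \ref{lem.irr}(2)) so that its quadratic in $b$ is solvable. Your homogeneous binary quadratic coming from the pencil $uM+vN$ has a nonzero root with no such condition.
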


\begin{proof}
Suppose that $\Phi^0= \begin{pmatrix} \phi_1 & \phi_2 \\ \phi_3 &\phi_4 \end{pmatrix}$
satisfies $\dim \Phi^0\leq 2$.
By Lemma \ref{lem.irr}, $\{\phi_1, \phi_2\}$ is linearly independent.  Since $\dim \Phi^0\leq 2$, $\phi_3=\a \phi_1+\b\phi_2, \phi_4=\c \phi_1+\d \phi_2$ for some $\a, \b, \c, \d\in k$ and $\b\neq 0$.  Then we have
$$\begin{pmatrix} 1 & 0 \\ a & 1 \end{pmatrix}\begin{pmatrix} \phi_1 & \phi_2 \\ \phi_3 &\phi_4 \end{pmatrix}\begin{pmatrix} 1 & b \\ 0 & 1 \end{pmatrix}=\begin{pmatrix} \phi_1 & b\phi_1+\phi_2 \\ a\phi_1+\phi_3 & ab\phi_1+a\phi_2+b\phi_3+\phi_4 \end{pmatrix},$$
and
$$ab\phi_1+a\phi_2+b\phi_3+\phi_4=(b(a+\a)+\c)\phi_1+(a+b\b +\d)\phi_2.$$
It remains to show that there exist $a, b\in k$ such that $b(a+\a)+\c=a+b\b +\d=0$.  To solve this system of equations, since $a=-b\b-\d$, it is enough to find $b\in k$ such that $\b b^2+(\d-\a)b-\c=0$.  Since $\b\neq 0$ and $k$ is algebraically closed, it has a solution.
Therefore $\Phi^0\sim \begin{pmatrix} * & * \\ * & 0 \end{pmatrix}$.
By Lemma \ref{lem.irr}, $f$ is reducible, which is a contradiction.
\end{proof}

\begin{lemma} \label{lem.isom}
Let $A=S/(f)$ be an irreducible noncommutative quadric, and $\phi\in \LMF^2_S(f)$.
For $(a, b), (c, d)\in \PP^1$,  $\Coker (a, b)\overline {\Phi^0}\cong \Coker (c, d)\overline {\Phi^0}$ if and only if $(a, b)=(c, d)$.
\end{lemma}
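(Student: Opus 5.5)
The direction ``if'' is trivial, so the plan is to prove ``only if''. Put $u=a\phi_1+b\phi_3$, $v=a\phi_2+b\phi_4$, $u'=c\phi_1+d\phi_3$, $v'=c\phi_2+d\phi_4$. By Lemma \ref{lem.lcor} we have $\Coker (a,b)\overline{\Phi^0}\cong A/uA+vA$, and by the argument there this equals $S/uS+vS$, since $f\in uS_1+vS_1$; the same holds for $(c,d)$.

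First I will show that an isomorphism of these cyclic modules forces equality of the degree-one parts of their defining right ideals. Any graded isomorphism $S/uS+vS\to S/u'S+v'S$ sends the image of $1$ to a nonzero scalar multiple of the image of $1$, because both modules are cyclic and generated in degree $0$ with one-dimensional degree $0$ part. Their annihilators of the generator therefore coincide, so $uS+vS=u'S+v'S$ as right ideals. Comparing degree-one parts gives $ku+kv=ku'+kv'$ inside $S_1$.

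Next I will assume $(a,b)\neq(c,d)$ in $\PP^1$ and derive a contradiction. Then $(a,b)$ and $(c,d)$ form a basis of $k^2$, so $k\phi_1+k\phi_3=ku+ku'$ and $k\phi_2+k\phi_4=kv+kv'$. By Lemma \ref{lem.irr}(2), each of $u,v,u',v'$ is nonzero and $\{u,v\}$ is linearly independent. Lemma \ref{lem.cocoW} applied to $P\Phi^0$ gives the same conclusion for $\{u',v'\}$. The pair $\{u,u'\}$ is a change of basis of $\{\phi_1,\phi_3\}$ and is therefore independent, and likewise $\{v,v'\}$ is independent. Since $ku+kv=ku'+kv'$, the span $W$ of all four elements $u,v,u',v'$ is $2$-dimensional. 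But $W=k\phi_1+k\phi_2+k\phi_3+k\phi_4$, because the $\phi_i$ are recovered from $u,u',v,v'$. Hence $\dim\Phi^0=2$, which contradicts Lemma \ref{lem.4con}.

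The only delicate step is the first one: that an isomorphism of the cokernels, as opposed to an equality, yields equality of the right ideals. It rests on the modules being cyclic with one-dimensional degree $0$ part and on graded maps preserving degree. Once that is in place, the rest is linear algebra on spans, closed off by Lemma \ref{lem.4con}.
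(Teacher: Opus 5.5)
Your proof is correct and follows essentially the same route as the paper: the isomorphism gives $ku+kv=ku'+kv'$ in $S_1$, and if $(a,b)\neq(c,d)$ this puts all four entries of $\Phi^0$ into a space of dimension at most $2$, contradicting Lemma \ref{lem.4con}. The paper normalizes to $(a,b)=(1,0)$ and leaves implicit the step from isomorphism to equality of spans, which you justify explicitly using the one-dimensional degree-$0$ part; your linear-independence claims for $u,v,u',v'$ are harmless but unnecessary, since $\dim\Phi^0\le 2$ already gives the contradiction.
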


\begin{proof} Without loss of generality, we may assume that $(a, b)=(1, 0)$.  Let $\Phi^0=\begin{pmatrix} \phi_1 & \phi_2 \\ \phi_3 & \phi_4 \end{pmatrix}$.  If
$$\Coker (1, 0)\overline {\Phi^0}=A/\phi_1A+\phi_2A\cong \Coker (c, d)\overline {\Phi^0}=A/(c\phi_1+d\phi_3)A+(c\phi_2+d\phi_4)A,$$
then $k\phi_1+k\phi_2=k(c\phi_1+d\phi_3)+k(c\phi_2+d\phi_4)$.    By Lemma \ref{lem.4con}, $\dim \Phi^0\geq 3$. If $d\neq 0$, then $\phi_3, \phi_4\in k\phi_1+k\phi_2$, which is a contradiction, so $(c, d)=(1, 0)$.
\end{proof}

\subsection{Rulings on an Irreducible Noncommutative Quadric}

\begin{definition} Let $A=S/(f)$ be an irreducible noncommutative quadric.
For $X=\Coker (\phi[1])(1)\in \mathbb M$ where $\phi\in \LMF_S^2(f)$,
we define  the {\it ruling} associated to $X$ by $${\mathsf R}_X:=\{\Coker ((a, b)\overline {\Phi^0}) \mid (a, b)\in \PP^1\}.$$
\end{definition}

\begin{lemma} \label{lem.coco}
Let $A=S/(f)$ be an irreducible noncommutative quadric.
Then the set of all isomorphism classes of line modules over $A$ is given by
$$\coprod _{X\in \mathbb M}{\mathsf R}_X.$$
Moreover,
$\Omega L\cong X(-1)$ for every $L\in {\mathsf R}_X$.
\end{lemma}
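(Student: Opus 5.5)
We prove three things in turn: every line module lies in some ${\mathsf R}_X$, each ${\mathsf R}_X$ is a well-defined set of line modules, and the union over $X\in\mathbb M$ is disjoint. The formula $\Omega L\cong X(-1)$ falls out along the way.

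\emph{Covering and well-definedness.} By Lemma \ref{lem.cocoW}, $L$ is a line module if and only if $L\cong \Coker((a,b)\overline{\Phi^0})$ for some indecomposable $\phi\in\LMF^2_S(f)$ and some $(a,b)\in\PP^1$. By Lemma \ref{lem.mphi}(1),(2), $X:=\Coker(\phi[1](1))\cong \Omega(\Coker\phi)(1)$ lies in $\mathbb M$. Hence $L\in{\mathsf R}_X$, so every line module appears in the union.

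Conversely, fix $X\in\mathbb M$. By Lemma \ref{lem.mphi}(1) and (3) (i.e.\ $\Omega^{-1}X(-1)\in\mathbb M$), $\phi$ with $X\cong\Coker(\phi[1](1))$ is unique up to isomorphism. By Lemma \ref{lem.pp3}, replacing $\phi$ by an isomorphic $\psi$ changes $\Phi^0$ to $P\Phi^0Q$. Then $(a,b)\overline{\Phi^0}$ becomes $(a,b)P^{-1}\overline{\Psi^0}Q^{-1}$, so the set ${\mathsf R}_X$ is unchanged. By Lemma \ref{lem.cocoW} each member is a line module.

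\emph{Syzygy formula.} For $L=\Coker((a,b)\overline{\Phi^0})$, change basis by $P$ as in Lemma \ref{lem.lcor} so that $(a,b)=(1,0)$. By Lemma \ref{lem.LS} and the irreducibility argument of Lemma \ref{lem.cocoW}, the presentation
$$A(-1)^2\xrightarrow{(1,0)\overline{\Phi^0}}A\to L\to 0$$
is minimal. So $\Omega L$ is the image of $A(-1)^2$ under $(1,0)\overline{\Phi^0}$. We compare it with the exact sequence
$$0\to \Coker(\phi[1])\to A^2\to \Coker\phi\to0,$$
which comes from $\Phi^0\Phi^1=f$ and uses $\Omega(\Coker\phi)\cong\Coker(\phi[1])$. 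Take the map $A(-1)^2\to A(-1)^2/\overline{\Phi^1}A(-2)^2$ followed by the row $(1,0)$. This gives a surjection $\Coker(\phi[1])\to\Omega L$, i.e.\ $X(-1)\to\Omega L$.

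Both sides have Hilbert series $2t(1-t)^{-1}-t^2(1-t)^{-2}$ after shifting. Indeed $H_L=(1-t)^{-2}$ and $H_A=(1+t)(1-t)^{-3}$, so $H_{\Omega L}=H_A-H_L$. For $X$, the resolution $0\to X(-1)\to A^2\to \Coker\phi\to0$ together with $\Coker\phi\in\mathbb M$ and $H_{\Coker\phi}=2(1-t)^{-2}$ (rank-two linear matrix factorization) gives the same series. A surjection between locally finite modules with equal Hilbert series is an isomorphism, so $\Omega L\cong X(-1)$. I expect this Hilbert series bookkeeping to be the main obstacle.

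\emph{Disjointness.} If $L\in{\mathsf R}_X\cap{\mathsf R}_{X'}$, then $X(-1)\cong\Omega L\cong X'(-1)$, so $X\cong X'$. Within a single ruling, distinct points of $\PP^1$ give non-isomorphic line modules by Lemma \ref{lem.isom}. This makes the union a disjoint union of copies of $\PP^1$.
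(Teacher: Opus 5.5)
Your plan matches the paper's proof: you cover the line modules via Lemmas \ref{lem.cocoW} and \ref{lem.mphi}, identify $\Omega L$ with $\Coker\overline{\Phi^1}=X(-1)$, and get disjointness from $X\cong\Omega L(1)$. The paper simply asserts that $\overline{\Phi^1}\cdot$ followed by $(a,b)\overline{\Phi^0}\cdot$ gives a linear free resolution of $L$. Your surjection $X(-1)\to\Omega L$ plus a Hilbert series comparison is the right way to justify exactness at $A(-1)^2$. Your check that ${\mathsf R}_X$ does not depend on the choice of $\phi$ is also a useful addition. However, the Hilbert series step, which you rightly call the crux, is wrong as written, and with your numbers it fails.

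The error is the value $H_{\Coker\phi}=2(1-t)^{-2}$. The module $\Coker\phi$ is maximal Cohen--Macaulay over $A$, which has GK-dimension $3$. As an $S$-module it has the resolution $0\to S(-1)^2\to S^2\to\Coker\phi\to 0$ given by $\Phi^0\cdot$; equivalently, the periodic resolution over $A$ gives $2H_A/(1+t)$. Hence $H_{\Coker\phi}=2(1-t)^{-3}$.

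With your value you would get $H_{X(-1)}=2H_A-2(1-t)^{-2}=4t(1-t)^{-3}$. On the other hand, $H_{\Omega L}=H_A-H_L=(1+t)(1-t)^{-3}-(1-t)^{-2}=2t(1-t)^{-3}$. So the two series would not agree, and the common value $2t(1-t)^{-1}-t^2(1-t)^{-2}$ you state equals neither of them under any shift.

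With the correct value, $H_{X(-1)}=2(1+t)(1-t)^{-3}-2(1-t)^{-3}=2t(1-t)^{-3}=H_{\Omega L}$. Your surjection is then an isomorphism, and the rest of your argument goes through.

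A minor citation slip: $\Omega^{-1}X(-1)\in\mathbb M$ is Lemma \ref{lem.mphi}(2) with $i=-1$, not part (3).
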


\begin{proof}
By  Lemmas \ref{lem.mphi}, \ref{lem.cocoW}, and \ref{lem.isom}, the set of all isomorphism classes of line modules over $A$ is given by
$$\bigcup _{X\in \mathbb M}{\mathsf R}_X.$$

Since
$$\begin{CD} \cdots @>\overline{\Phi^{1}}\cdot>>A(-1)^2 @>\overline{(a, b)\Phi^{0}}\cdot>>
A \to L\to 0\end{CD}$$
is a (linear) free resolution of $L$,
$\Omega L\cong \Coker \overline{\Phi^{1}}\cong X(-1)$.
If $L\in {\mathsf R}_X\cap {\mathsf R}_Y$ for $X, Y\in \mathbb M$, then $X\cong \Omega L(1)\cong Y$, so the union is disjoint.
\end{proof}

\begin{proposition} \label{prop.321}
Let $A$ be a smooth irreducible noncommutative quadric, and $L, L'$ line modules over $A$.
Then there exists an exact sequence
\begin{equation}\label{eline}
0\to L'(-1)\to A/aA\to L\to 0
\end{equation}
for some $0\neq a\in A_1$ if and only if
$X'\cong \Omega X(1)$, where
$X= \Omega L(1), X'=\Omega L'(1)\in \mathbb M$.
\end{proposition}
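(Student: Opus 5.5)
Write $L=\Coker((1,0)\overline{\Phi^0})=A/\phi_1A+\phi_2A$ with $\phi\in\LMF^2_S(f)$ indecomposable and $X\cong\Coker(\phi[1])(1)$, as in Lemma \ref{lem.coco}; we may take $(a,b)=(1,0)$ after the change of basis used in Lemma \ref{lem.lcor}. The plan is to construct, or analyze, the sequence \eqref{eline} through the minimal linear resolution of $L$. That resolution is
\[
\cdots\to A(-2)^2\xrightarrow{\overline{\Phi^1}}A(-1)^2\xrightarrow{(\phi_1,\phi_2)}A\to L\to 0 .
\]
Since $\Omega X(1)$ is the second syzygy direction, we have $\Omega X(1)\cong\Coker(\phi[2])(2)$. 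By Lemma \ref{lem.mphi}(3) and the definition of rulings, the line modules in ${\mathsf R}_{\Omega X(1)}$ are exactly the $\Coker((c,d)\overline{\Phi^{1}})(1)$ up to the appropriate shift. Note that ${\mathsf R}_{X'}$ is built from $\phi'$ with $X'=\Coker(\phi'[1])(1)$; taking $\phi'=\phi[1](1)$ gives $X'\cong\Omega X(1)$, and then ${\mathsf R}_{X'}$ consists of $\Coker((c,d)\overline{\Phi^1})$ with degrees shifted by one.

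For the direction ($\Leftarrow$), suppose $X'\cong\Omega X(1)$. By Lemma \ref{lem.coco}, $L'\cong\Coker((c,d)\overline{\Phi^1})(1)$ for some $(c,d)$. Write $(a_1,a_2)^t=\Phi^0$-column data. Each pair $(c,d)$ gives an element $a:=c\phi_1'+\dots$; concretely, $\Phi^0\Phi^1=fE_2$ gives $\phi_1\psi_1+\phi_2\psi_3=f$. Take $a\in A_1$ to be the entry obtained from the first row of $\Phi^0$ paired with $(c,d)$, after changing bases by $P\in\GL_2(k)$ as in Lemma \ref{lem.lcor} so that $(c,d)=(1,0)$. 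Then $aA\subset \phi_1A+\phi_2A$; say $a=\phi_1$ after the base change on the column side. The kernel of $A/\phi_1A\to A/\phi_1A+\phi_2A$ is $(\phi_1A+\phi_2A)/\phi_1A$, which is generated in degree $1$ by $\phi_2$ and is isomorphic to $A(-1)/\{x:\phi_2x\in\phi_1A\}$. Using $\phi_1\psi_2=-\phi_2\psi_4$ and $\phi_2\psi_3=-\phi_1\psi_1$ modulo $f$, this annihilator contains $\psi_3A+\psi_4A$. Hilbert series count then forces equality: $H_{A/\phi_1A}=(1+t)/(1-t)^3$ because $\phi_1$ is regular on $A$ ($A$ is a domain modulo the irreducibility argument, or by a regularity argument via $S$), and $H_L=(1-t)^{-2}$. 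So the kernel has series $t(1-t)^{-2}$ and is $\Coker((\psi_3,\psi_4))(-1)$. This is a line module in the ruling of $\phi[1]$, i.e.\ of $\Omega X(1)$, after applying Lemma \ref{lem.isom}/\ref{lem.coco}. Because every point $(c,d)$ is reached by the base change of the columns of $\Phi^0$ (equivalently of the rows of $\Phi^1$), every $L'\in{\mathsf R}_{\Omega X(1)}$ arises this way.

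For the direction ($\Rightarrow$), given \eqref{eline}, $L'(-1)$ is a line module generated in degree $1$, hence $aA\subset \ker(A\to L)=\phi_1A+\phi_2A$, so $a=c\phi_1+d\phi_2$. Rechoosing bases, we get $a=\phi_1$, and the computation above identifies $L'(-1)$ with $\Coker(\text{row of }\overline{\Phi^1})(-1)$. Hence $\Omega L'(1)\cong\Coker(\phi[2])(2)\cong\Omega X(1)$ by Lemma \ref{lem.coco}.

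The main obstacle is verifying that $\phi_1$ is regular on $A$ and that the annihilator is exactly $\psi_3A+\psi_4A$ (with the correct row/column bookkeeping after base change). I would handle this by comparing Hilbert series, using the linear resolution of Lemma \ref{lem.lphi} and $\dim\Phi^0\ge3$ (Lemma \ref{lem.4con}). Smoothness appears only to guarantee that $\mathbb M$ has two objects, so that the dichotomy is meaningful; the argument itself only uses irreducibility.
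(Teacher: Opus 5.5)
Your proof is correct, but it takes a different route from the paper's.

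\textbf{The paper's argument.} The paper obtains ($\Leftarrow$) only by citing the construction in the proof of \cite[Proposition 7.4]{SV}. It proves ($\Rightarrow$) homologically. A sequence as in the statement cannot split, so $\Ext^1_A(L,L'(-1))_0\neq 0$. This group is a quotient of $\Hom_A(X,L')_0$, which embeds in $\Ext^1_A(X,X'(-1))_0\cong\Hom_{\uCM^{\ZZ}(A)}(\Omega X(1),X')$. The latter vanishes when $X'\not\cong\Omega X(1)$, because the two correspond to non-isomorphic simple modules over $C(A)\cong M_2(k)\times M_2(k)$. This is exactly where smoothness is used.

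\textbf{Your argument.} You compute the kernel of $A/aA\to L$ explicitly. You normalize $L=A/(\phi_1A+\phi_2A)$ and $a=\phi_1$ by row and column changes of $\Phi^0$, with the compensating changes of $\Phi^1$. The first row of $\Phi^0\Phi^1=fE_2$ then shows that $\psi_3,\psi_4$ kill the class of $\phi_2$ modulo $\phi_1A$. This gives a surjection $(A/(\psi_3A+\psi_4A))(-1)\to(\phi_1A+\phi_2A)/\phi_1A$, and Hilbert series make it an isomorphism.

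\textbf{What each route buys.} Yours treats both directions uniformly and makes ($\Leftarrow$) self-contained. It also gives an explicit correspondence: $a=d\phi_1-c\phi_2$ produces $L'=\Coker((c,d)\overline{\Phi^1})$. It never uses smoothness, so it proves the statement for every irreducible quadric. The paper's argument is shorter given $\uCM^0(A)\cong\mod C(A)$, but it relies on an outside reference for one direction.

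\textbf{Repairs to the write-up.} None of these affects validity.
\begin{enumerate}
\item Drop the appeal to ``$A$ is a domain'', which is not available in this setting.
\item Fix the typo: $H_{A/\phi_1A}$ should be $(1+t)/(1-t)^2$, not $(1+t)/(1-t)^3$.
\item Regularity of $\phi_1$ on $A$ is not needed as an input. Let $K$ be the kernel of $A/\phi_1A\to L$.
\begin{enumerate}
\item In ($\Rightarrow$), $K\cong L'(-1)$ has Hilbert series $t(1-t)^{-2}$. The module $(A/(\psi_3A+\psi_4A))(-1)$ has the same series, by Lemma \ref{lem.cocoW} applied to $\phi[1](1)$. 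So the surjection onto $K$ is an isomorphism.
\item In ($\Leftarrow$), the coefficientwise inequality $H_{A/\phi_1A}\ge(1-t)H_A$ gives $H_K\ge t(1-t)^{-2}$. The surjection gives $H_K\le t(1-t)^{-2}$. So equality holds, and left regularity of $\phi_1$ on $A$ comes out as a by-product.
\end{enumerate}
\item In ($\Rightarrow$), the reason $a\in k\phi_1+k\phi_2$ is not that $L'(-1)$ is generated in degree $1$. It is that the composite surjection $A\to A/aA\to L$ kills $a$, and its kernel is $\phi_1A+\phi_2A$ because $L$ is cyclic with $L_0=k$.
\end{enumerate}
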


\begin{proof}
If $X'\cong \Omega X(1)$,
then, as in the proof of \cite [Proposition 7.4]{SV}, we obtain an exact sequence \eqref{eline}.

Suppose that $X'\ncong \Omega X(1)$ and that there exists an exact sequence \eqref{eline}.
Since $A/aA$ is generated in degree 0 while $L'(-1)$ is generated in degree 1, there is no nonzero map $A/aA\to L'(-1)$, so \eqref{eline} does not split.
To derive a contradiction, it is enough to show that $\Ext^1_A(L, L'(-1))_0=0$.  An exact sequence $0\to X(-1)\to A\to L\to 0$ induces an exact sequence
$$\Hom_A(X(-1), L'(-1))_0\to \Ext^1_A(L, L'(-1))_0\to \Ext_A^1(A, L'(-1))_0=0,$$
so it is enough to show that $\Hom_A(X(-1), L'(-1))_0\cong \Hom_A(X, L')_0=0$.  Since $\depth X=3$, an exact sequence
$0\to X'(-1)\to A\to L'\to 0$ induces an exact sequence
$$
0 =\Hom_A(X, A)_0\to \Hom_A(X, L')_0 \to \Ext^1_A(X, X'(-1))_0,
$$
so it is enough to show that $\Ext^1_A(X, X'(-1))_0=0$.

By Lemma \ref{lem.pp4}(2) and Proposition \ref{prop.sism}(1), we have an equivalence $\uCM^0(A)\cong \mod (M_2(k) \times M_2(k))$. Under this equivalence, $\Omega X(1)$ and $X'$ correspond respectively
to the two non-isomorphic simple modules over $M_2(k) \times M_2(k)$. Therefore we get
\begin{align*}
\Ext^1_A(X, X'(-1))_0 & \cong \Hom_{\uCM^{\ZZ}(A)}(X, X'(-1)[1]) \cong \Hom_{\uCM^{\ZZ}(A)}(\Omega X(1), X')=0.
\end{align*}
Hence the assertion follows.
\end{proof}

\begin{corollary} \label{cor.mq}
Let $A$ be a smooth irreducible noncommutative quadric.
\begin{enumerate}
\item{} $A$ is standard if and only if there exist a line module $L$ over $A$ and an exact sequence
$$0\to L'(-1)\to A/aA\to L\to 0$$
for some $0\neq a\in A_1$ such that $L'$ is a line module over $A$ in a different ruling of $L$.
\item{} $A$ is non-standard if and only if there exist a line module $L$ over $A$ and  an exact sequence
$$0\to L'(-1)\to A/aA\to L\to 0$$
for some $0\neq a\in A_1$ such that $L'$ is a line module in the same ruling of $L$.
\end{enumerate}
\end{corollary}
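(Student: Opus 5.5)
This corollary follows by combining Proposition \ref{prop.321} with the description of line modules and rulings in Lemma \ref{lem.coco}. By Proposition \ref{prop.sism}(1), $\mathbb M=\{X,Y\}$ has exactly two elements. By Lemma \ref{lem.coco}, the line modules are partitioned into ${\mathsf R}_X\sqcup{\mathsf R}_Y$, with $\Omega L(1)\cong X$ for $L\in{\mathsf R}_X$ and $\Omega L(1)\cong Y$ for $L\in{\mathsf R}_Y$. So two line modules $L,L'$ lie in the same ruling if and only if $\Omega L(1)\cong\Omega L'(1)$ in $\mathbb M$.

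For (1), first assume $A$ is standard, so $\Omega X(1)\cong Y$. Pick any $L\in{\mathsf R}_X$ and any $L'\in{\mathsf R}_Y$; both rulings are nonempty, being indexed by $\PP^1$. Then $\Omega L(1)\cong X$ and $\Omega L'(1)\cong Y\cong\Omega X(1)$. The ``if'' direction of Proposition \ref{prop.321} therefore yields an exact sequence $0\to L'(-1)\to A/aA\to L\to 0$ with $0\neq a\in A_1$, and $L'$ lies in a different ruling from $L$. Conversely, suppose such a sequence exists with $L'$ in a different ruling. By the ``only if'' direction of Proposition \ref{prop.321}, $\Omega L'(1)\cong\Omega(\Omega L(1))(1)$. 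Relabeling so that $\Omega L(1)=X$, we get $\Omega L'(1)=Y$ because the rulings differ, so $\Omega X(1)\cong Y$. By Lemma \ref{lem.mphi}(2),(3), $\Omega X(1)\cong Y$ forces $\Omega Y(1)\cong X$, since $\Omega Y(1)\cong\Omega^2X(2)\cong X$. Hence $A$ is standard.

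Part (2) is the same argument with the roles swapped. If $A$ is non-standard, take $L,L'\in{\mathsf R}_X$, say $L=L'$; since $\Omega X(1)\cong X\cong\Omega L'(1)$, Proposition \ref{prop.321} gives the sequence. Conversely, a sequence with $L,L'$ in the same ruling ${\mathsf R}_X$ gives $\Omega X(1)\cong X$ by Proposition \ref{prop.321}. Then $\Omega Y(1)\cong Y$ follows from Lemma \ref{lem.mphi}(2): $\Omega Y(1)\in\mathbb M$, and it cannot be $X$, because $\Omega(-)(1)$ is injective on isomorphism classes by (3). So $A$ is non-standard.

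There is no real obstacle here. The only point requiring care is the dichotomy in Lemma \ref{lem.mphi}(3): $\Omega(-)(1)$ either fixes both $X$ and $Y$ or swaps them, so knowing its value on one element determines it on the other. That is what lets a single exact sequence decide standardness.
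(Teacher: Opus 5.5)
Your proposal is correct and follows the paper's own proof. Both combine Proposition~\ref{prop.321} with the partition of line modules into ${\mathsf R}_X\sqcup{\mathsf R}_Y$ from Lemma~\ref{lem.coco}, so that a sequence with $L,L'$ in different (resp.\ the same) rulings exists exactly when $\Omega X(1)\cong Y\not\cong X$ (resp.\ $\Omega X(1)\cong X$) for some $X\in\mathbb M$; your explicit use of Lemma~\ref{lem.mphi}(2),(3) to show that the action of $\Omega(-)(1)$ on one element of $\mathbb M$ determines it on the other fills in a step the paper leaves implicit.
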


\begin{proof}
By Proposition \ref{prop.321}, there exists an exact sequence
$$0\to L'(-1)\to A/aA\to L\to 0$$
where $L$ and $L'$ are line modules belonging to different rulings (resp.\ the same ruling)  if and only if there exist $X, Y \in \mathbb M$ such that $Y \cong \Omega X(1)$ and $X \neq Y$ (resp.\ $X=Y$). Thus, such an exact sequence exists if and only if $A$ is standard (resp. non-standard).
\end{proof}

\section{Standardness under Zhang Twists}

In this section, we study how standardness of smooth irreducible noncommutative quadrics behaves under Zhang twists.
We first give a criterion in terms of the induced action on the families of lines associated to $X,Y\in\mathbb M$. We then apply this criterion to twists of the smooth commutative quadric.

\subsection{Twists of a Smooth Irreducible Noncommutative  Quadric}

In this subsection, we study the Zhang twist $A^\sigma$ of a noncommutative
quadric $A=S/(f)$ by an automorphism $\sigma\in \GrAut S$ such that
$\sigma(f)=\lambda f$ for some $0\neq \lambda\in k$.
Then $\sigma$ induces a graded algebra automorphism of $A$, which we still
denote by $\sigma\in \GrAut A$ by abuse of notation.
By adjusting the scalar, we may and will assume that $\sigma(f)=f$.

\begin{lemma}[{\cite[Theorem 5.11]{Zh}}]\label{lem.zh}
For every $\s\in \GrAut S$, $S$ is a quantum polynomial algebra satisfying the condition (*)
if and only if so is $S^\s$.
\end{lemma}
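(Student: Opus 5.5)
The statement is attributed to \cite[Theorem 5.11]{Zh}, so the plan is to check that each defining property of a quantum polynomial algebra, and each part of condition (*), is invariant under Zhang twisting. Since $(S^{\s})^{\s^{-1}}\cong S$, it suffices to prove one direction: if $S$ has the property, then so does $S^\s$.

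The tool throughout is the equivalence \eqref{eZh}, $(-)^\s:\GrMod S\to\GrMod S^\s$. It is the identity on underlying graded vector spaces and commutes with the degree shift, in the sense that $M(j)^\s\cong M^\s(j)$. It sends $S_S$ to $S^\s$ and the trivial module $k$ to $k$.

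The individual properties then go as follows.
\begin{itemize}
\item \emph{Connected grading and Hilbert series.} These are automatic, since $S^\s=S$ as graded vector spaces. In particular $H_{S^\s}(t)=H_S(t)$.
\item \emph{Noetherianity.} Right noetherianity follows because the equivalence preserves the lattice of graded submodules of the regular module. For left noetherianity we use the left-sided version of Zhang's equivalence, twisting by $\s^{-1}$ on the other side, or equivalently the identification $(S^\s)^{op}\cong (S^{op})^{\s^{-1}}$ up to the usual twisting.
\item \emph{Global dimension.} For a connected graded noetherian algebra, $\gldim S=\operatorname{pd} k_S$. Projective dimensions are preserved by the equivalence, which gives condition (1).
\item \emph{Gorenstein condition.} We have $\Ext^i_{S^\s}(k,S^\s)_j\cong\Ext^i_S(k,S(j))_0$ via the equivalence. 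Hence condition (2) of Definition \ref{def.qpa} transfers, at least on the right side, and the left-sided version is handled symmetrically.
\end{itemize}

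The main obstacle is condition (*), namely the Auslander-regular and Cohen--Macaulay properties. Both involve bimodule-type data: the Ext groups $\Ext^i_S(M,S)$ viewed as left modules, and the GK-dimension of these left modules. A right-module equivalence alone does not obviously control them.

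To handle this, I would use the fact that $\Ext^i_S(M,S)$ computed in $\GrMod$ carries its left $S$-structure via left multiplication on $S$. Under twisting, $S^\s$ as an $S^\s$-bimodule corresponds to $S$ with a twisted left action. One then checks that
\[
\Ext^i_{S^\s}(M^\s,S^\s)\cong{}^{\s}\Ext^i_S(M,S)
\]
as graded left $S^\s$-modules, which is the left version of the twist. Given this, submodule lattices and Hilbert series are preserved. Since $S$ has polynomial growth and the modules are finitely generated graded, GK-dimension is read off from the Hilbert series. Therefore the grade conditions and the equality $j(M)+\operatorname{GKdim}M=\operatorname{GKdim}S$ transfer verbatim.

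Verifying this compatibility is the step I expect to require care. In practice I would take it from \cite{Zh}, since the cited theorem precisely asserts that AS-regularity, Auslander regularity and the Cohen--Macaulay property are twist-invariant.
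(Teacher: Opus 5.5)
Your proposal is correct and takes the same route as the paper, which gives no argument beyond citing \cite[Theorem 5.11]{Zh}. Your direct checks are sound elaborations: the Hilbert series, noetherianity via $(S^{\sigma})^{\mathrm{op}}\cong (S^{\mathrm{op}})^{\sigma^{-1}}$, global dimension via $\operatorname{pd} k$, and the Gorenstein condition. You are also right to defer the Auslander-regular and Cohen--Macaulay parts to Zhang.

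One correction: the twist functor does not commute with degree shifts in general. In fact $M(j)^{\sigma}$ is $M^{\sigma}(j)$ with its action precomposed by $\sigma^{-j}$, and the paper's Lemma~\ref{lem.mq2}, $L_{\ell}(-1)^{\sigma}\cong (L_{\sigma^*(\ell)})^{\sigma}(-1)$, exhibits the failure. You only use this compatibility for $M=S$, where $S(j)^{\sigma}\cong S^{\sigma}(j)$ via $s\mapsto \sigma^{j}(s)$, and for $M=k$, so nothing in your argument breaks.
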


Recall that, for a graded algebra $S$ and $\s\in \GrAut S$, there is an equivalence functor $\GrMod S\to \GrMod S^{\s};\ M\mapsto M^\s$; see \eqref{eZh}.

\begin{lemma}[{\cite[Theorem 3.7]{MUnmf}}] \label{lem.twnm}
Let $S$ be a graded algebra and $f\in S_d$.  For $\s\in \GrAut S$ such that $\s(f)=f$,
$$\NMF_S^{\ZZ}(f)\to \NMF_{S^\s}^{\ZZ}(f^\s); \quad  \phi=\{\phi^i\}_{i\in \ZZ}\mapsto \phi^{\s}:=\{(\phi^i)^{\s}\}_{i\in \ZZ}$$
is an equivalence functor, which restricts to an equivalence functor
$$\LMF_S^r(f)\to \LMF_{S^\s}^r(f^\s).$$
\end{lemma}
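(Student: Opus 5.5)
The plan is to build the functor on matrix factorizations directly from Zhang's module equivalence \eqref{eZh} for $S$. Then check that it sends free modules to free modules, preserves the relation $\phi^i\phi^{i+1}=f\cdot$ (now with respect to $f^\s$), and has an inverse of the same form.

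\textbf{Step 1 (free modules).} For $m\in\ZZ$ I will write down an explicit isomorphism $\theta_m:S^\s(-m)\to S(-m)^\s$ in $\GrMod S^\s$, namely $a^\s\mapsto(\s^m(a))^\s$. To check $S^\s$-linearity, take $a\in S_j$, which sits in degree $j+m$. In $S^\s(-m)$ we have $a^\s b^\s=(a\s^j(b))^\s$, so
$$\theta_m(a^\s b^\s)=\bigl(\s^m(a)\s^{m+j}(b)\bigr)^\s .$$
In $S(-m)^\s$ we have
$$\theta_m(a^\s)\,b^\s=\bigl(\s^m(a)\s^{j+m}(b)\bigr)^\s .$$
These agree, so $\theta_m$ is $S^\s$-linear. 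Taking direct sums, $(F^i)^\s$ is again graded free with the same shifts $m_{is}$ and the same rank. In particular $F^0$ is generated in degree $0$ if and only if $(F^0)^\s$ is.

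\textbf{Step 2 (matrices).} Let $\phi^i=\Phi^i\cdot$ with $\Phi^i=(\phi_{st})$ homogeneous. Transport $(\phi^i)^\s$ through the isomorphisms $\theta$. I expect it to become left multiplication over $S^\s$ by a matrix whose $(s,t)$ entry is $\s^{n_{st}}(\phi_{st})^\s$ for a suitable power $n_{st}$, which depends only on the shifts $m_{is}$ and $m_{i+1,t}$.

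Since $\s$ is a graded automorphism, each entry keeps its degree. Hence linear matrix factorizations, with all entries in $S_1$, go to linear ones. Also, since $\s^{\pm1}$ is applied entrywise, nonzero stays nonzero.

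For the composition, $(\phi^i\phi^{i+1})^\s=(\phi^i)^\s(\phi^{i+1})^\s$ holds because $(-)^\s$ is a functor. So it suffices to show that $(f\cdot)^\s:F^{i+2}(-?)^\s\to (F^i)^\s$ corresponds under $\theta$ to left multiplication by $f^\s$. The twisted left multiplication by $f$ becomes multiplication by $\s^{n}(f)^\s$ for some $n$, and this equals $f^\s$ because $\s(f)=f$. This is exactly where the hypothesis $\s(f)=f$ is used.

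\textbf{Step 3 (morphisms and inverse).} Morphisms $\mu=\{\mu^i\}$ go to $\{(\mu^i)^\s\}$, and the commuting squares are preserved since $(-)^\s$ is a functor. Full faithfulness follows because \eqref{eZh} is an equivalence, so $\Hom$-sets are preserved. For essential surjectivity, use $(S^\s)^{\s^{-1}}=S$ and $\s^{-1}(f^\s)=f^\s$. The analogous construction in the reverse direction then gives a quasi-inverse, up to the natural isomorphisms $\theta$.

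\textbf{Main obstacle.} I expect the main difficulty to be the bookkeeping of the powers of $\s$ in Step 2. One must make sure the exponents are consistent across consecutive $\phi^i$, so that the product really reduces to $f^\s E_r$ rather than some other twist $\s^n(f)^\s E_r$. Only the hypothesis $\s(f)=f$ removes this ambiguity. I would first fix $\theta_m$ as above and verify the identity for a single rank-one pair $S(-d)\xrightarrow{f}S$; the general case then follows entrywise.
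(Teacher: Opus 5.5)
Your argument is correct. The paper gives no proof of its own here (it simply quotes \cite[Theorem 3.7]{MUnmf}), and what you propose is the natural direct proof of that cited result. With your $\theta_m$, the $(s,t)$ entry of $\Phi^i$ is transported to $\sigma^{-m_{is}}(\phi_{st})^\sigma$. Hence $f\cdot\colon S(-m-d)\to S(-m)$ becomes $\sigma^{-m}(f)^\sigma$, which equals $f^\sigma$ exactly because $\sigma(f)=f$. For linear factorizations this gives $\Phi^0\mapsto\Phi^0$ and $\Phi^1\mapsto\sigma^{-1}(\Phi^1)$, matching the pair $(\Phi,\sigma^{-1}(\Psi))$ used in Section 4.

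The only thing to adjust is your proposed sanity check. The pair $S(-d)\xrightarrow{f}S$ has shift $m=0$ and never sees the hypothesis $\sigma(f)=f$, so you must carry out the computation at an arbitrary shift $m$. Once you do, consistency between consecutive $\phi^i$ is automatic, since each $\theta$ is attached to a module rather than to a map.
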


Let $f\in S$ be a homogeneous element.  If there exists a bijection $\nu:S\to S$ such that $af=f\nu(a)$ for every $a\in S$, then $f$ is normal.  Moreover, if $f$ is regular normal, then there exists a unique $\nu\in \GrAut S$ such that $af=f\nu(a)$ for every $a\in S$.
In this case, we call $\nu\in \GrAut S$ the {\it normalizing automorphism} of $f$.

\begin{proposition} \label{prop.mq1}
Let $A=S/(f)$ be a smooth irreducible noncommutative quadric with $\mathbb M=\{X, Y\}$,
 and $\s\in \GrAut S$ such that $\s(f)=f$.
\begin{enumerate}
\item{} $A^{\s}$ is a smooth irreducible noncommutative quadric with $\mathbb M^{\s}=\{X^{\s}, Y^{\s}\}$.
\item{}  ${\mathsf R}_{X(-1)^{\s}(1)}=\{L^{\s}\mid L\in {\mathsf R}_X\}, {\mathsf R}_{Y(-1)^{\s}(1)}=\{L^{\s}\mid L\in {\mathsf R}_Y\}$ are two rulings of $A^{\s}$
so that either $X(-1)^{\s}(1)\cong X^\s, Y(-1)^{\s}(1)\cong Y^\s$ or $X(-1)^{\s}(1)\cong Y^\s, Y(-1)^{\s}(1)\cong X^\s$.
\item{} For line modules $L, L'$ over $A$, $L^{\s}, {L'}^{\s}$ are line modules over $A^{\s}$, and $L, L'$ are in the same ruling if and only if $L^{\s}, {L'}^{\s}$ are in the same ruling.
\end{enumerate}
\end{proposition}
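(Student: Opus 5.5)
We prove the three parts in order, using the Zhang twist equivalence \eqref{eZh} together with Lemma \ref{lem.twnm}. The main point is that twisting commutes with taking cokernels of noncommutative matrix factorizations.

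\emph{Part (1).} First, $S^\s$ is a $4$-dimensional quantum polynomial algebra satisfying (*) by Lemma \ref{lem.zh}. Since $\s(f)=f$, the graded subspace $fS$ is an ideal of $S^\s$ equal to $f^\s S^\s$, because $f^\s a^\s=(f\s^2(a))^\s$. Hence $A^\s=S^\s/(f^\s)$.

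Next, $f^\s$ is regular normal. Regularity holds because $S^\s$ is a domain. For normality, if $\nu$ is the normalizing automorphism of $f$, then for $a\in S_i$ we have
\[
a^\s f^\s=(af)^\s=(f\nu(a))^\s=f^\s(\s^{-2}\nu(a))^\s,
\]
so $f^\s$ is normal with normalizing automorphism $\s^{-2}\nu$.

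For irreducibility, suppose $f^\s=u^\s v^\s$ with $u,v\in S_1$. Then $f=u\s(v)$, which contradicts irreducibility of $f$.

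For smoothness, the equivalence \eqref{eZh} preserves finitely generated and finite-dimensional modules. It therefore induces $\tails A\cong\tails A^\s$, so $\tails A^\s$ has finite global dimension.

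To identify $\mathbb M^\s$, I would use Lemma \ref{lem.mphi}(1): $\mathbb M$ corresponds to the isomorphism classes of indecomposable objects of $\LMF^2_S(f)$ via $\phi\mapsto\Coker\phi$. By Lemma \ref{lem.twnm}, the functor $\phi\mapsto\phi^\s$ gives an equivalence $\LMF^2_S(f)\to\LMF^2_{S^\s}(f^\s)$. The key compatibility to check is
\[
\Coker(\phi^\s)\cong(\Coker\phi)^\s .
\]
This should follow because $(-)^\s$ is an exact functor and $\phi^\s$ is, by definition, the twisted morphism $(\phi^0)^\s$ between the twisted free modules. Here one must take care that $F^i(-m)^\s$ is identified with a shifted free $S^\s$-module; this identification is exactly what is built into Lemma \ref{lem.twnm}. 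Granting the compatibility, we get $\mathbb M^\s=\{X^\s,Y^\s\}$. This is also consistent with Proposition \ref{prop.sism}, since $A^\s$ is smooth irreducible.

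\emph{Part (2).} Write $X(-1)\cong\Coker(\phi[1])$ with $\phi\in\LMF^2_S(f)$, so that ${\mathsf R}_X=\{\Coker((a,b)\overline{\Phi^0})\}$. Twisting each line module $L$ in ${\mathsf R}_X$ is again compatible with cokernels. The twisted row map $((a,b)\overline{\Phi^0})^\s$ is $(a,b)$ times the zeroth matrix of $\phi^\s$, up to the same twist of entries that appears in $\phi^\s$. Hence
\[
L^\s\in{\mathsf R}_{X'}, \qquad X'=\Coker(\phi^\s[1])(1)=(\Coker\phi[1])^\s(1)=X(-1)^\s(1),
\]
and as $(a,b)$ ranges over $\PP^1$ we obtain all of ${\mathsf R}_{X'}$. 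The same argument applies to $Y$.

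Since twisting preserves cyclicity and Hilbert series, it gives a bijection on isomorphism classes of line modules. By Lemma \ref{lem.coco}, the line modules over $A^\s$ form the disjoint union of the two rulings. The sets $\{L^\s\mid L\in{\mathsf R}_X\}$ and $\{L^\s\mid L\in{\mathsf R}_Y\}$ are disjoint because ${\mathsf R}_X$ and ${\mathsf R}_Y$ are disjoint and twisting is injective on isomorphism classes. Therefore $X(-1)^\s(1)\not\cong Y(-1)^\s(1)$, and these two objects are exactly the two elements $X^\s,Y^\s$ of $\mathbb M^\s$, in one order or the other.

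\emph{Part (3).} This is immediate from (2) and Lemma \ref{lem.coco}.

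I expect the main obstacle to be the bookkeeping of degree shifts. In general $M(j)^\s$ is not literally $M^\s(j)$, which is why the statement carries the awkward expression $X(-1)^\s(1)$. To keep this under control, I would work throughout at the level of matrix factorizations, via Lemma \ref{lem.twnm}, and only pass to cokernels at the very end.
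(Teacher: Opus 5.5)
Your proposal is correct and follows essentially the same route as the paper. Part (1) goes through Lemma \ref{lem.zh}, the normalizing map $a^\sigma\mapsto(\sigma^{-2}\nu(a))^\sigma$, $\tails A\cong\tails A^\sigma$, and Lemmas \ref{lem.twnm} and \ref{lem.mphi}(1); parts (2)--(3) use exactness of $(-)^\sigma$ together with Lemma \ref{lem.coco}. The differences are cosmetic: the paper identifies the ruling of $L^\sigma$ from the twisted sequence $0\to X(-1)^\sigma\to A^\sigma\to L^\sigma\to 0$, giving $\Omega L^\sigma(1)\cong X(-1)^\sigma(1)$, rather than from the twisted presentation matrix, and your check that $f^\sigma=u^\sigma v^\sigma$ forces $f=u\sigma(v)$ spells out an irreducibility step the paper only asserts.
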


\begin{proof} (1)  Since $S$ and $S^\s$ are domains, every non-zero element of $S$ or $S^\s$ is regular.  Since $\s(f)=f$, we have $A^\s\cong S^{\s}/(f^\s)$.  Let  $\nu\in \GrAut S$
be the normalizing automorphism of the (regular) normal element of $f\in S_2$.
For $a^\s\in S^\s_i$, there exists a bijection $\rho:S^\s\to S^\s; a^\s\mapsto(\s^{-2}\nu(a))^\s$ such that
\[ a^\s f^\s =(a\s^i(f))^\s =(af)^\s =(f\nu(a))^\s
=f^\s(\s^{-2}\nu(a))^\s =f^\s\rho(a^\s),
\]
so $f^\s\in S^\s_2$ is a (regular) normal element.
Since $S^\s$ is a $4$-dimensional quantum polynomial algebra,
$f^\s\in S^{\s}_2$ is an irreducible normal element, and $\tails A \cong \tails A^\s$ by \cite[Theorem 1.4]{Zh},
$A^\s$ is a smooth irreducible noncommutative quadric.  If $X\cong \Coker \phi, Y\cong \Coker \psi$ for $\phi, \psi\in \LMF_S^2(f)$, then $X^\s\cong \Coker \phi^\s, Y^\s\cong \Coker \psi^\s$ for $\phi^\s, \psi^\s\in \LMF_{S^\s}^2(f^\s)$ by Lemma \ref{lem.twnm},
so $\mathbb M^\s=\{X^\s,Y^\s\}$ by Lemma \ref{lem.mphi} (1).

(2)
The exact sequence $0\to X(-1)\to A\to L\to 0$ in $\grmod A$ induces an exact sequence $0\to X(-1)^{\s}\to A^{\s}\to L^{\s}\to 0$ in $\grmod A^{\s}$,
so $\Omega L^{\s}(1)\cong X(-1)^{\s}(1)$.  The result now follows from Lemma \ref{lem.coco}.

(3) By (2), $L, L'\in {\mathsf R}_X$ if and only if $L^{\s}, {L'}^{\s}\in {\mathsf R}_{X(-1)^{\s}(1)}$, hence the result follows.
\end{proof}

\begin{remark}  In the above proof, since $f$ is regular and
$$f\nu\s(a)=\s(a)f=\s(a)\s(f)=\s(af)=\s(f\nu(a))=\s(f)\s\nu(a)=f\s\nu(a)$$
for every $a\in S$, we have $\s\nu=\nu\s$.
It follows that the map $\rho:S^\s\to S^\s; a^\s\mapsto(\s^{-2}\nu(a))^\s$
is a graded algebra automorphism of $S^\s$, so $\rho$ is in fact the normalizing automorphism of $f^\s\in S^\s$.
\end{remark}

Let $A=T(V)/I$ be a graded algebra generated by a $4$-dimensional vector space $V$ over $k$. For linearly independent vectors $u, v\in V$, we define $L_{\ell}=A/uA+vA \in \grmod A$, where $\ell=\cV(u,v)\subset \PP(V^*)=\PP^3$ is a line.  (Note that $L_{\ell}$ depends on $\ell=\cV(u, v)$ but not on $u, v$.)  We need the following lemma later.

\begin{lemma}  \label{lem.mq2}
Let $A=T(V)/I$ be a graded algebra generated by a $4$-dimensional vector space $V$ over $k$.
For $\s\in \GrAut A \leq \GL(V)$, we have $L_{\ell}(-1)^{\s}\cong (L_{\s^*(\ell)})^{\s}(-1)$ in $\grmod A^{\s}$ where $\s^*(\ell):=\cV(\s^{-1}(u), \s^{-1}(v))$.
\end{lemma}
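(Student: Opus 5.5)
The plan is to write down an explicit isomorphism of graded $A^\s$-modules, checking compatibility with the twisted action directly from the definitions of Zhang twists. Both sides are cyclic $A^\s$-modules, so it suffices to identify each as $A^\s$ modulo an explicit right ideal, up to shift.

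\emph{Left-hand side.} We have $L_\ell(-1) = (A/uA+vA)(-1)$, whose generator $\bar 1$ sits in degree $1$. Its twist $L_\ell(-1)^\s$ is generated by $\bar 1^\s$. For $a\in A$ the twisted action is
\[
\bar 1^\s a^\s = (\bar 1\,\s^{1}(a))^\s,
\]
because the twisted action uses $\s^{i}$ with $i$ the degree of the element in $L_\ell(-1)$, here $i=1$. Consequently the annihilator of $\bar 1^\s$ consists of those $a^\s$ with $\s(a)\in uA+vA$, that is, $a\in \s^{-1}(u)A+\s^{-1}(v)A$ (as $\s$ is an automorphism of $A$). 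We then translate this set into the twisted ring: $\s^{-1}(u)A = \{(\s^{-1}(u)b)^\s\}$, and for homogeneous $b$,
\[
(\s^{-1}(u)b)^\s = \s^{-1}(u)^\s\,(\s^{-1}(b))^\s,
\]
since $\s^{-1}(u)$ has degree $1$. Hence $\s^{-1}(u)A$ coincides with the right ideal $\s^{-1}(u)^\s A^\s$, and likewise for $v$. Therefore $L_\ell(-1)^\s \cong \bigl(A^\s/\s^{-1}(u)^\s A^\s+\s^{-1}(v)^\s A^\s\bigr)(-1)$.

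\emph{Right-hand side.} The module $L_{\s^*(\ell)} = A/\s^{-1}(u)A+\s^{-1}(v)A$ has its generator in degree $0$, so twisting uses $\s^0=\id$ on the generator. Thus $(L_{\s^*(\ell)})^\s \cong A^\s/\s^{-1}(u)^\s A^\s+\s^{-1}(v)^\s A^\s$, by the same identification of right ideals as above, and shifting by $(-1)$ gives exactly the module obtained for the left-hand side.

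The main obstacle is bookkeeping: tracking which power of $\s$ appears. It depends on the degree of the generator, which is $1$ for $L_\ell(-1)$ and $0$ after the shift on the right-hand side. One must also verify that right ideals generated in degree $1$ in $A$ correspond exactly to right ideals of $A^\s$. Finally, we note that $\s^*(\ell)$ is well defined, since $\s^{-1}(u),\s^{-1}(v)$ are again linearly independent.
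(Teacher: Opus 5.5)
Your proof is correct and is essentially the paper's argument. The paper twists the presentations $A(-1)^2\to A\to L_{\s^*(\ell)}\to 0$ and $A(-2)^2\to A(-1)\to L_\ell(-1)\to 0$, and uses the isomorphisms $A^\s(-i)\cong A(-i)^\s$ given by $\s^i$ to see that both twisted maps become left multiplication by $(\s^{-1}(u),\s^{-1}(v))$ in $A^\s$; your annihilator computation of the cyclic generator is the same calculation, and both identify each side with $\bigl(A^\s/(\s^{-1}(u)A^\s+\s^{-1}(v)A^\s)\bigr)(-1)$, with the power of $\s$ fixed by the degree of the generator as you note.
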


\begin{proof}
Since $\s^*(\ell)=\cV(\s^{-1}(u), \s^{-1}(v))\subset \PP^3$, we have an exact sequence
\[
\xymatrix@C=1.2cm{
A(-1)^2 \ar[rr]^-{(\sigma^{-1}(u),\,\sigma^{-1}(v))\cdot}
  && A \ar[r]
  & L_{\sigma^*(\ell)} \ar[r]
  & 0
}
\]
in $\grmod A$, which induces an exact sequence
\[
\xymatrix@C=1.2cm@R=.6cm{
A^2(-1)^{\s} \ar[rr]^-{(\s^{-1}(u),\,\s^{-1}(v))\cdot^{\s}}
  && A^{\s} \ar[r]
  & (L_{\s^*(\ell)})^{\s} \ar[r]
  & 0 \\
A^{\s}(-1)^2 \ar[u]^{(\s,\,\s)}_{\cong} \ar[rr]^-{(\s^{-1}(u),\,\s^{-1}(v))\cdot}
  && A^{\s} \ar[u]^{\id}_{\cong}
}
\]
in $\grmod A^{\s}$, so $(L_{\s^*(\ell)})^{\s}\cong A^{\s}/(\s^{-1}(u)A^{\s}+\s^{-1}(v)A^{\s})$.
On the other hand, an exact sequence
\[
\xymatrix@C=1.2cm{
A(-2)^2 \ar[r]^-{(u,\,v)\cdot}
  & A(-1) \ar[r]
  & L_{\ell}(-1) \ar[r]
  & 0
}
\]
in $\grmod A$ induces an exact sequence
\[
\xymatrix@C=1.2cm@R=.6cm{
A^2(-2)^{\s} \ar[rr]^{(u,\,v)\cdot^{\s}}
  && A(-1)^{\s} \ar[r]
  & L_{\ell}(-1)^{\s} \ar[r]
  & 0 \\
A^{\s}(-2)^2 \ar[u]^{(\s^2,\,\s^2)}_{\cong}
  \ar[rr]^{(\s^{-1}(u),\,\s^{-1}(v))\cdot}
  && A^{\s}(-1) \ar[u]^{(\s,\,\s)}_{\cong}
}
\]
in $\grmod A^{\s}$, so
$L_{\ell}(-1)^{\s}\cong (A^{\s}/(\s^{-1}(u)A^{\s}+\s^{-1}(v)A^{\s}))(-1) \cong (L_{\s^*(\ell)})^{\s}(-1)$.
\end{proof}

For the rest of this section, we assume that  $A = S/(f)$ is a standard smooth irreducible noncommutative quadric
generated by $x,y,z,w$.
Let $\mathbb M=\{X, Y\}$ and let $\nu\in \GrAut S$ be the normalizing automorphism of $f$.
We choose
$$\left(\Phi=\begin{pmatrix} \phi_1 & \phi_2 \\ \phi_3 &\phi_4 \end{pmatrix},\Psi=\begin{pmatrix} \psi_1 & \psi_2 \\ \psi_3 &\psi_4 \end{pmatrix}\right)\in \LMF_S^2(f)$$
such that $X=\Coker \overline{\Phi}$ and $Y=\Coker \overline {\Psi}$. We define
\[
\Phi^\natural = \begin{pmatrix} \phi_1 & \phi_2 \\ \phi_3 &\phi_4 \end{pmatrix}
\;\;\text{and}\;\;
\Psi^\natural = \begin{pmatrix} \psi_1 & \psi_2 \\ \psi_3 &\psi_4 \end{pmatrix}
\;\;
\in M_2(k[x,y,z,w]_1)
\]
Note that if $\Phi' \in M_2(S_1)$ such that $\Phi\sim \Phi'$, then $\Phi'=P\Phi Q$ for some $P, Q\in \GL_2(k)$, so $\cV(\det (\Phi')^\natural)=\cV(\det P\Phi^\natural Q)=\cV(\det \Phi^\natural)\subset \PP^3$.

If $L \in {\mathsf R}_X$, then we have
\[ L \cong (\Coker (a,b)\overline{\Psi})(1)\cong A/(a\psi_1+b\psi_3)A+(a\psi_2 +b\psi_4)A =:L_p\]
for some unique $p= (a,b) \in \PP^1$ by Lemma \ref{lem.coco}.
For $L_p \in {\mathsf R}_X$, we define
\begin{align}
	\ell_p := \cV(a\psi_1+b\psi_3, a\psi_2 +b\psi_4) \;\; \subset \cV(\det \Psi^\natural)= \cV(\psi_1\psi_4-\psi_2\psi_3) \label{lp}
\subset \PP^3
\end{align}
and ${\mathsf L}_X :=\{\ell_p \subset \PP^3\mid p \in \PP^1\}$.

Similarly, if $L \in {\mathsf R}_Y$, then we have
\[L \cong (\Coker (a,b)\overline{\Phi}) (1) \cong A/(a\phi_1+b\phi_3)A+(a\phi_2 +b\phi_4)A =: {_p}L\]
for some unique $p= (a,b) \in \PP^1$ by Lemma \ref{lem.coco}.
For ${_p}L \in {\mathsf R}_Y$, we define
\[ {_p}\ell := \cV(a\phi_1+b\phi_3, a\phi_2 +b\phi_4)
\;\; \subset \cV(\det \Phi^\natural)= \cV(\phi_1\phi_4-\phi_2\phi_3) \subset \PP^3\]
and ${\mathsf L}_Y :=\{{_p}\ell \subset \PP^3\mid p \in \PP^1\}$. It is easy to see that ${\mathsf L}_X$ and ${\mathsf L}_Y$ are well-defined.

\begin{remark}
If $A$ is commutative, then $\cV(\det \Phi^\natural)=\cV(\det\Psi^\natural)=\Proj A\subset \PP^3$, however,  if $A$ is not commutative, then $\cV(\det \Phi^\natural)$ and $\cV(\det\Psi^\natural)$ do not always coincide, so ${\mathsf L}_X$ and ${\mathsf L}_Y$ may not be two rulings of the same smooth quadric in $\PP^3$ (see Remark \ref{rem.det}).
\end{remark}

\begin{lemma} \label{lem.empty}
With the notation above, ${\mathsf L}_X \cap {\mathsf L}_Y=\varnothing$.
\end{lemma}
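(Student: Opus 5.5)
Suppose, for contradiction, that some line lies in both families, say $\ell_p={_q}\ell$ for some $p,q\in\PP^1$. The plan is to pass from this equality of lines in $\PP^3$ back to line modules over $A$ and contradict the disjointness of the rulings in Lemma \ref{lem.coco}.

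First, record what the equality means. Write $u=a\psi_1+b\psi_3$, $v=a\psi_2+b\psi_4$ for $p=(a,b)$, and $u'=c\phi_1+d\phi_3$, $v'=c\phi_2+d\phi_4$ for $q=(c,d)$. By Lemma \ref{lem.irr}, both pairs $\{u,v\}$ and $\{u',v'\}$ are linearly independent in $S_1$; for the first pair, first replace $\Psi$ by $P\Psi$ with $P\in\GL_2(k)$ so that $(a,b)=(1,0)$, as in the proof of Lemma \ref{lem.lcor}. The line $\cV(u,v)\subset\PP^3=\PP(S_1^*)$ is defined through the images of $u,v$ in $k[x,y,z,w]_1$. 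Since $S_1$ and $k[x,y,z,w]_1$ are identified as the same $4$-dimensional space $V$, a line in $\PP(V^*)$ determines the $2$-dimensional subspace of $V$ vanishing on it. Hence $\ell_p={_q}\ell$ gives $ku+kv=ku'+kv'$ as subspaces of $S_1$.

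Consequently the right ideals they generate in $A$ coincide:
\[
L_p=A/uA+vA=A/u'A+v'A={_q}L,
\]
literally as quotients of $A$, so certainly $L_p\cong{_q}L$. By construction $L_p\in{\mathsf R}_X$ and ${_q}L\in{\mathsf R}_Y$, and by Lemma \ref{lem.coco} these are line modules. Lemma \ref{lem.coco} also says the set of isomorphism classes of line modules is the disjoint union ${\mathsf R}_X\amalg{\mathsf R}_Y$, with $\Omega L(1)$ recovering the index: $\Omega L_p(1)\cong X$ and $\Omega {_q}L(1)\cong Y$. Since $X\not\cong Y$, because $\mathbb M$ has exactly two objects by Proposition \ref{prop.sism}, this is a contradiction. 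Hence ${\mathsf L}_X\cap{\mathsf L}_Y=\varnothing$.

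The only delicate point is bookkeeping in the indexing conventions. ${\mathsf L}_X$ is built from $\Psi$, which presents $Y$, because $L_p=(\Coker(a,b)\overline\Psi)(1)$ and $\Omega L_p(1)\cong \Omega Y(1)\cong X$ by standardness. Likewise ${\mathsf L}_Y$ comes from $\Phi$. So the two families genuinely index the two different rulings, which is exactly where standardness is used. The passage from lines to subspaces is immediate once one notes that $\natural$ is just the identity on degree-one parts.
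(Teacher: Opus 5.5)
Your proof is correct and follows essentially the same route as the paper: equality of the lines forces the two pairs of linear forms to span the same subspace of $S_1$, so the two line modules coincide as quotients of $A$, which puts one module in both ${\mathsf R}_X$ and ${\mathsf R}_Y$ and forces $X\cong Y$ (the disjointness in Lemma \ref{lem.coco}), a contradiction. Your additional remarks are accurate bookkeeping that the paper leaves implicit: the linear independence of the two pairs via Lemma \ref{lem.irr}, and which of $\Phi,\Psi$ indexes which ruling.
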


\begin{proof}
Suppose that there exists $\ell \subset \PP^3$ such that $\ell \in {\mathsf L}_X \cap {\mathsf L}_Y$.
Then
\[ \cV(a\phi_1+b\phi_3, a\phi_2 +b\phi_4) = \ell = \cV(c\psi_1+d\psi_3, c\psi_2 +d\psi_4). \]
Since $\phi_i, \psi_j$ are of degree 1, there exists $\begin{pmatrix} p_1 &p_2 \\ p_3 &p_4 \end{pmatrix} \in \GL_2(k)$ such that
\[ c\psi_1+d\psi_3 = p_1(a\phi_1+b\phi_3) + p_3(a\phi_2 +b\phi_4), \quad  c\psi_2 +d\psi_4 = p_2(a\phi_1+b\phi_3) + p_4(a\phi_2 +b\phi_4) \]
in $k[x, y, z, w]_1$ and in $A_1$. We have
\begin{align*}
{\mathsf R}_X \ni &A/(c\psi_1+d\psi_3)A+ (c\psi_2 +d\psi_4)A\\
&= A/(p_1(a\phi_1+b\phi_3) + p_3(a\phi_2 +b\phi_4))A+(p_2(a\phi_1+b\phi_3) + p_4(a\phi_2 +b\phi_4))A\\
&= A/(a\phi_1+b\phi_3)A+ (a\phi_2 +b\phi_4)A \in {\mathsf R}_Y,
\end{align*}
so $X \cong Y$, which is a contradiction. It follows that ${\mathsf L}_X \cap {\mathsf L}_Y = \varnothing$.
\end{proof}

Let $\s \in \GrAut S$ be such that $\s(f) =f$. We define
\[
\s^{-1}(\Phi)=
\begin{pmatrix} \s^{-1}(\phi_1) & \s^{-1}(\phi_2) \\ \s^{-1}(\phi_3) &\s^{-1}(\phi_4) \end{pmatrix}
\ \ \text{and} \ \
\s^{-1}(\Psi)=
\begin{pmatrix} \s^{-1}(\psi_1) & \s^{-1}(\psi_2) \\ \s^{-1}(\psi_3) &\s^{-1}(\psi_4) \end{pmatrix}.
\]
Then
\begin{align*}
\s^{-1}(\Phi)\s^{-1}(\Psi)
&=\begin{pmatrix} \s^{-1}(\phi_1) & \s^{-1}(\phi_2) \\ \s^{-1}(\phi_3) &\s^{-1}(\phi_4) \end{pmatrix} \begin{pmatrix} \s^{-1}(\psi_1) & \s^{-1}(\psi_2) \\ \s^{-1}(\psi_3) &\s^{-1}(\psi_4) \end{pmatrix}\\
&=\begin{pmatrix} \s^{-1}(f) & \s^{-1}(0) \\ \s^{-1}(0) &\s^{-1}(f) \end{pmatrix} = \begin{pmatrix} f & 0 \\0 &f \end{pmatrix}
\end{align*}
in $M_2(S)$, so $(\s^{-1}(\Phi), \s^{-1}(\Psi))\in \LMF_S^2(f)$.
It follows that
$\Coker \overline {\s^{-1}(\Phi)}\in \mathbb{M}$, so either
\begin{itemize}
\item[($\dagger$)] $\Coker \overline {\s^{-1}(\Phi)} \cong X=\Coker \overline {\Phi}$, or
\item[($\ddagger$)] $\Coker \overline {\s^{-1}(\Phi)} \cong Y=\Coker \overline{\Psi}$
\end{itemize}
occurs.  Consider
\[ \cV(\det {\s^{-1}(\Phi)}^\natural) = \cV(\s^{-1}(\phi_1)\s^{-1}(\phi_4)-\s^{-1}(\phi_2)\s^{-1}(\phi_3))\; \subset \PP^3. \]
We then define
\begin{align}
\begin{split}
\s^* : \cV(\phi_1\phi_4 -\phi_2\phi_3) &\to \cV(\s^{-1}(\phi_1)\s^{-1}(\phi_4)-\s^{-1}(\phi_2)\s^{-1}(\phi_3)); \label{snatural}\\
p \;\; &\mapsto \;\; (\s(x)(p), \s(y)(p), \s(z)(p), \s(w)(p)).
\end{split}
\end{align}
Note that $\s^*$ is an isomorphism because $\s$ is induced by a matrix of $\GL_4(k)$.

\begin{lemma} \label{lem.dsd}
Let $A=S/(f)$ be a standard smooth irreducible noncommutative quadric and let $\s\in \GrAut S$ such that $\s(f)=f$.
If ($\dagger$) occurs, then  $\s^*(\ell_p)\in {\mathsf L}_Y$ for every $\ell_p \in {\mathsf L}_Y$.
If ($\ddagger$) occurs, then $\s^*(\ell_p)\in {\mathsf L}_X$ for every $\ell_p\in {\mathsf L}_Y$.
\end{lemma}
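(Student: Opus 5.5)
Note first that the lines in ${\mathsf L}_Y$ are the ${}_p\ell=\cV(a\phi_1+b\phi_3,\,a\phi_2+b\phi_4)$ attached to the rows of $(a,b)\Phi$. The statement writes $\ell_p\in{\mathsf L}_Y$; I read this as an arbitrary member ${}_p\ell$ of ${\mathsf L}_Y$.

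The plan is to compute $\s^*$ of such a line directly and recognize the result as a line attached to a row combination of $\s^{-1}(\Phi)$. Then I transport this via ($\dagger$)/($\ddagger$) and Lemma \ref{lem.pp3}.

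\textbf{Step 1: compute $\s^*({}_p\ell)$.} The map $\s^*$ in \eqref{snatural} is induced by the linear change of coordinates $\s\in\GL(S_1)$. For a linear form $u\in S_1$ and a point $q$, we have $u(\s^*(q))=\s^{-1}(u)(q)$ up to the convention in \eqref{snatural}, consistent with Lemma \ref{lem.mq2}. Hence
\[
\s^*({}_p\ell)=\cV\bigl(a\s^{-1}(\phi_1)+b\s^{-1}(\phi_3),\ a\s^{-1}(\phi_2)+b\s^{-1}(\phi_4)\bigr).
\]
So $\s^*({}_p\ell)$ is the line cut out by the entries of the row $(a,b)\s^{-1}(\Phi)$.

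\textbf{Step 2: case ($\dagger$).} Here $\Coker\overline{\s^{-1}(\Phi)}\cong\Coker\overline{\Phi}$. By Lemma \ref{lem.pp3}, $\s^{-1}(\Phi)\sim\Phi$, so $\s^{-1}(\Phi)=P\Phi Q$ for some $P,Q\in\GL_2(k)$. Put $(a',b'):=(a,b)P\in\PP^1$. The entries of $(a,b)\s^{-1}(\Phi)$ are the entries of $(a',b')\Phi$ multiplied on the right by $Q$. Right multiplication by the invertible $Q$ does not change the span of the two linear forms, hence does not change the vanishing locus. Therefore
\[
\s^*({}_p\ell)=\cV\bigl(a'\phi_1+b'\phi_3,\ a'\phi_2+b'\phi_4\bigr)={}_{p'}\ell\in{\mathsf L}_Y .
\]

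\textbf{Step 3: case ($\ddagger$).} Here Lemma \ref{lem.pp3} gives $\s^{-1}(\Phi)\sim\Psi$, say $\s^{-1}(\Phi)=P\Psi Q$. The same computation yields
\[
\s^*({}_p\ell)=\cV\bigl(a'\psi_1+b'\psi_3,\ a'\psi_2+b'\psi_4\bigr)=\ell_{p'}\in{\mathsf L}_X .
\]

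The main obstacle is bookkeeping rather than a real difficulty. First, the sign convention for $\s^*$ must match the one in Lemma \ref{lem.mq2}, so that $\s^*$ corresponds to applying $\s^{-1}$ to the defining forms. Second, one must check that the relation $\sim$, which holds in $M_2(S_1)$, is legitimately used on the linear forms in $k[x,y,z,w]_1$; this is fine because $S_1=k[x,y,z,w]_1$ as vector spaces. Finally, the two forms defining $\s^*({}_p\ell)$ remain linearly independent, by Lemma \ref{lem.irr} applied to $\s^{-1}(\Phi)$, so the locus is indeed a line.
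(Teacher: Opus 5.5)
Your proposal is correct and follows the paper's proof essentially step for step. You write $\s^*({}_p\ell)$ as the vanishing locus of the entries of $(a,b)\s^{-1}(\Phi)^\natural$, use Lemma \ref{lem.pp3} to get $\s^{-1}(\Phi)=P\Phi Q$ (resp.\ $P\Psi Q$), and absorb $P$ into the point $(a,b)P\in\PP^1$ and $Q$ into a change of basis of the span of the two forms.

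One small correction to your Step 1: by \eqref{snatural} one actually has $u(\s^*(q))=\s(u)(q)$, not $\s^{-1}(u)(q)$. It is this identity that gives $\s^*(\cV(u,v))=\cV(\s^{-1}(u),\s^{-1}(v))$; the identity you wrote would instead give $\cV(\s(u),\s(v))$.
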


\begin{proof}
Let $\ell_p = \cV(a\phi_1+b\phi_3, a\phi_2 +b\phi_4) \in {\mathsf L}_Y$. Then we have
\begin{align*}
\s^*(\ell_p) &= \cV(\s^{-1}(a\phi_1+b\phi_3), \s^{-1}(a\phi_2 +b\phi_4))\\
&= \cV(a \s^{-1}(\phi_1)+b\s^{-1}(\phi_3), a\s^{-1}(\phi_2) +b\s^{-1}(\phi_4))\\
&= \cV((a,b) \s^{-1}(\Phi)^\natural) \quad \subset \cV(\det \s^{-1}(\Phi)^\natural).
\end{align*}

If ($\dagger$) occurs, then $\s^{-1}(\Phi) = P \Phi Q$ for some $P, Q \in \GL_2(k)$ by Lemma \ref{lem.pp3}, so
\begin{align*}
\cV((a,b) \s^{-1}(\Phi)^\natural)
&= \cV((a,b) P \Phi^\natural Q)\\
&=\cV((c\phi_1+d\phi_3, c\phi_2 +d\phi_4)Q)\\
&=\cV(c\phi_1+d\phi_3, c\phi_2 +d\phi_4) \quad \subset \cV(\det \Phi^\natural)
\end{align*}
where $(c,d) =(a,b)P \in \PP^1$, so $\s^*(\ell_p) \in {\mathsf L}_Y$.

If ($\ddagger$) occurs, then $\s^{-1}(\Phi) = P \Psi Q$ for some $P, Q \in \GL_2(k)$ by Lemma \ref{lem.pp3}, so
\begin{align*}
\cV((a,b) \s^{-1}(\Phi)^\natural) &= \cV((a,b) P \Psi^\natural Q)\\
&=\cV((c\psi_1+d\psi_3, c\psi_2 +d\psi_4) Q)\\
&=\cV(c\psi_1+d\psi_3, c\psi_2 +d\psi_4) \quad \subset \cV(\det \Psi^\natural)
\end{align*}
where $(c,d) =(a,b)P \in \PP^1$, so $\s^*(\ell_p) \in {\mathsf L}_X$.
\end{proof}

\begin{lemma} \label{lem.dsn}
Let  $A=S/(f)$ be a standard smooth irreducible noncommutative quadric and let $\s\in \GrAut S$ such that $\s(f)=f$.
If ($\dagger$) occurs, then $A^{\s}$ is standard.
If ($\ddagger$) occurs, then $A^{\s}$ is non-standard.
\end{lemma}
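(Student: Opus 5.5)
The plan is to compute $\Omega(X^\s)(1)$ in $\grmod A^\s$ directly from the matrices $\Phi,\Psi$. By Proposition \ref{prop.mq1}(1), $A^\s$ is a smooth irreducible noncommutative quadric with $\mathbb M^\s=\{X^\s,Y^\s\}$. So it suffices to decide whether $\Omega(X^\s)(1)\cong Y^\s$ (standard) or $\Omega(X^\s)(1)\cong X^\s$ (non-standard). Since $(-)^\s$ is an exact equivalence sending graded free modules to graded free modules (up to the isomorphisms $A(-i)^\s\cong A^\s(-i)$), it commutes with syzygies: $\Omega(X^\s)\cong(\Omega X)^\s$. Standardness of $A$ gives $\Omega X(1)\cong Y$, hence
\[
\Omega(X^\s)(1)\cong (Y(-1))^\s(1).
\]
Everything therefore reduces to comparing $(Y(-1))^\s(1)$ with $X^\s$ and $Y^\s$.

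\textbf{Step 1: presentations of twisted modules.} I will write down presentations of $M^\s$ and $M(-1)^\s$ for $M=\Coker\overline{\Theta}$ with $\Theta\in M_2(S_1)$, exactly as in the proof of Lemma \ref{lem.mq2}. For $M^\s$, the presentation $A(-1)^2\xrightarrow{\overline\Theta}A^2$ is twisted, and the identifications are $(A^2)^\s=A^{\s 2}$ (identity) and $A(-1)^\s\cong A^\s(-1)$ (via $\s$). This shows that $M^\s\cong\Coker\overline{\Theta}^\s$, where $\overline{\Theta}^\s$ has entries the same elements $\theta_{ij}$, now read in $A^\s$. For $M(-1)^\s$, the identifications are $\s^2$ and $\s$, and the same diagram chase as in Lemma \ref{lem.mq2} gives
\[
(M(-1))^\s\cong\bigl(\Coker \overline{\s^{-1}(\Theta)}^\s\bigr)(-1).
\]
Hence $(M(-1))^\s(1)\cong\Coker\overline{\s^{-1}(\Theta)}^\s$. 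Applying this with $\Theta=\Phi,\Psi$ gives
\[
X^\s\cong\Coker\overline\Phi^\s,\qquad Y^\s\cong\Coker\overline\Psi^\s,\qquad \Omega(X^\s)(1)\cong\Coker\overline{\s^{-1}(\Psi)}^\s.
\]

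\textbf{Step 2: the behaviour of $\s^{-1}(\Psi)$.} We already know $(\s^{-1}(\Phi),\s^{-1}(\Psi))\in\LMF^2_S(f)$. Hence
\[
\Coker\overline{\s^{-1}(\Psi)}\cong\Omega\bigl(\Coker\overline{\s^{-1}(\Phi)}\bigr)(1).
\]
\begin{itemize}
\item In case ($\dagger$), the right-hand side is $\Omega X(1)\cong Y=\Coker\overline\Psi$. So $\s^{-1}(\Psi)\sim\Psi$ by Lemma \ref{lem.pp3}.
\item In case ($\ddagger$), it is $\Omega Y(1)\cong X$, so $\s^{-1}(\Psi)\sim\Phi$.
\end{itemize}
Since $P\Theta Q$ with $P,Q\in\GL_2(k)$ is computed the same way in $A$ and in $A^\s$ (scalars are untouched by the twist), these relations transfer to $A^\s$. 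Thus:
\begin{itemize}
\item In case ($\dagger$), $\Omega(X^\s)(1)\cong\Coker\overline\Psi^\s\cong Y^\s$, and $A^\s$ is standard.
\item In case ($\ddagger$), $\Omega(X^\s)(1)\cong\Coker\overline\Phi^\s\cong X^\s$, and $A^\s$ is non-standard.
\end{itemize}
Alternatively, Lemma \ref{lem.pp3} can be invoked over $S^\s$ using Lemma \ref{lem.twnm}.

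\textbf{Main obstacle.} The delicate point is Step 1: keeping track of which power of $\s$ appears when the degree shift is moved past the twist. The twist does not commute with $(1)$; this failure is exactly what produces $\s^{-1}(\Psi)$ rather than $\Psi$, and it is the whole content of the dichotomy. I would verify this step carefully on generators, mirroring the two commutative diagrams in Lemma \ref{lem.mq2}. The one check is that the vertical isomorphism $\s$ on $A(-1)^\s\cong A^\s(-1)$ converts left multiplication by $\theta$ in the twisted product into left multiplication by $\s^{-1}(\theta)$, whereas with target $A^\s$ in degree $0$ no $\s$ appears. Everything else is formal from Lemmas \ref{lem.pp3} and \ref{lem.mphi} and Proposition \ref{prop.mq1}.
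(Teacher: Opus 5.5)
Your proof is correct, but it takes a different route from the paper's.

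\textbf{The paper's route.} The paper never computes $\Omega(X^\s)$ directly. It uses Corollary \ref{cor.mq} to pick an exact sequence $0\to L'(-1)\to A/aA\to L\to 0$ with $L,L'$ line modules in different rulings, and twists it. Lemma \ref{lem.mq2} then rewrites $(L'(-1))^\s$ as $(A/(\s^{-1}(u)A+\s^{-1}(v)A))^\s(-1)$. Under ($\dagger$) (resp.\ ($\ddagger$)), the cyclic module $A/(\s^{-1}(u)A+\s^{-1}(v)A)$ lies in the same ruling as $L'$ (resp.\ in the other ruling). The paper concludes over $A^\s$ by combining Proposition \ref{prop.mq1}(3) with Corollary \ref{cor.mq} a second time.

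\textbf{Your route.} You apply the same shift-versus-twist computation, the one behind Lemma \ref{lem.mq2}, to the rank-$2$ module $Y(-1)\cong\Omega X$ instead of to a cyclic line module. This gives the explicit formula $\Omega(X^\s)(1)\cong\Coker\overline{\s^{-1}(\Psi)}^{\s}$. You then read off standardness straight from the definition, once you know $\s^{-1}(\Psi)\sim\Psi$ or $\s^{-1}(\Psi)\sim\Phi$. The paper also uses that fact, implicitly, when it asserts $\Coker\overline{\s^{-1}(\Psi)}\cong\Coker\overline{\Psi}$ under ($\dagger$).

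\textbf{What each buys.} Your argument is shorter and more self-contained. It bypasses Corollary \ref{cor.mq}, and hence Proposition \ref{prop.321}, whose proof needs both the construction of the sequence and the $\Ext$-vanishing coming from semisimplicity of $C(A)$. It also avoids the ruling bookkeeping of Proposition \ref{prop.mq1}(2),(3); of that proposition you only need part (1). The paper's argument stays in the language of line modules and rulings. That is what makes the passage to the geometric criterion of Theorem \ref{thm:switch}, via Lemma \ref{lem.dsd}, read naturally.

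\textbf{One point to state explicitly.} Checking only $\Omega(X^\s)(1)$ is enough because $\Omega(-)(1)$ acts bijectively on the two-element set $\mathbb M^\s=\{X^\s,Y^\s\}$. It therefore either fixes both elements or swaps them, as in the proof of Lemma \ref{lem.mphi}(3).
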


\begin{proof}
Since $A$ is standard, Corollary \ref{cor.mq} implies that there exist
line modules $L$ and $L'$ in different rulings and $0\neq a\in A_1$ such that
\[
0\to L'(-1) \to A/aA \to L \to 0
\]
is an exact sequence in $\grmod A$.
Put $L' = A/uA+ vA$. Then we have an exact sequence
\[
\xymatrix@C=.6cm@R=.6cm{
0 \ar[r]
  & (A/uA+vA)(-1)^{\s} \ar[r]\ar[d]^{\cong}
  & (A/aA)^{\s} \ar[r] \ar[d]^{\cong}
  & L^{\s} \ar[r]
  & 0 \\
  & (A/(\s^{-1}(u)A+\s^{-1}(v)A))^{\s}(-1)
  & A^{\s}/a^{\s}A^{\s}
}
\]
in $\grmod A^\s$ by Lemma \ref{lem.mq2}.

If ($\dagger$) occurs, then $\Coker \overline {\s^{-1}(\Phi)} \cong \Coker \overline {\Phi}$ and $\Coker \overline {\s^{-1}(\Psi)} \cong \Coker \overline {\Psi}$.
It follows that $L'=A/uA+ vA$ and $A/(\s^{-1}(u)A+ \s^{-1}(v)A)$ are in the same ruling,
and so $L$ and $A/(\s^{-1}(u)A+\s^{-1}(v)A)$ are in different rulings.
By Proposition \ref{prop.mq1}(3), $L^{\s}$ and $(A/(\s^{-1}(u)A+\s^{-1}(v)A))^{\s}$ are in  different rulings, so  $A^\s$ is standard by Corollary \ref{cor.mq}.

If ($\ddagger$) occurs, then
$\Coker \overline {\s^{-1}(\Phi)} \cong \Coker \overline {\Psi}$ and $\Coker \overline {\s^{-1}(\Psi)} \cong \Coker \overline{\Phi}$.
It follows that $L'=A/uA+ vA$ and $A/(\s^{-1}(u)A+ \s^{-1}(v)A)$ are in different rulings,
and so $L$ and $A/(\s^{-1}(u)A+ \s^{-1}(v)A)$ are in the same ruling.
By Proposition \ref{prop.mq1}(3), $L^{\s}$ and $(A/(\s^{-1}(u)A+\s^{-1}(v)A))^{\s}$ are in the same ruling, so $A^\s$ is non-standard by Corollary \ref{cor.mq}.
\end{proof}

As a consequence, we obtain the following result.

\begin{theorem}\label{thm:switch}
Let  $A=S/(f)$ be a standard smooth irreducible noncommutative quadric.
For any $\s\in \GrAut S$ such that $\s(f)=f$,
$A^{\s}$ is standard if and only if $\s^*(\ell_p)\in {\mathsf L}_Y$ for every $\ell_p \in {\mathsf L}_Y$, and $A^{\s}$ is non-standard if and only if $\s^*(\ell_p)\in {\mathsf L}_X$ for every $\ell_p \in {\mathsf L}_Y$.
\end{theorem}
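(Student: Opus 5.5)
The plan is to combine Lemmas \ref{lem.dsd} and \ref{lem.dsn} with the fact that ($\dagger$) and ($\ddagger$) are exhaustive and mutually exclusive. First, I observe that exactly one of ($\dagger$), ($\ddagger$) occurs. Since $(\s^{-1}(\Phi),\s^{-1}(\Psi))\in\LMF^2_S(f)$, the module $\Coker\overline{\s^{-1}(\Phi)}$ lies in $\mathbb M=\{X,Y\}$ by Lemma \ref{lem.mphi}(1). Moreover $X\not\cong Y$, so the two cases cannot hold simultaneously. Likewise, a smooth irreducible quadric is either standard or non-standard and not both, by Lemma \ref{lem.mphi}(3) and the definition. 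Here I also use Proposition \ref{prop.mq1}(1), which says that $A^\s$ is again a smooth irreducible noncommutative quadric, so the dichotomy applies to it.

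Second, I show that the two geometric conditions are mutually exclusive. Since ${\mathsf L}_Y$ is nonempty, pick any $\ell\in{\mathsf L}_Y$. If both $\s^*(\ell)\in{\mathsf L}_Y$ and $\s^*(\ell)\in{\mathsf L}_X$ held, this would contradict Lemma \ref{lem.empty}, which gives ${\mathsf L}_X\cap{\mathsf L}_Y=\varnothing$.

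Now I prove the equivalences. Suppose $A^\s$ is standard. By Lemma \ref{lem.dsn}, case ($\ddagger$) would make $A^\s$ non-standard, so ($\dagger$) occurs. Then Lemma \ref{lem.dsd} gives $\s^*(\ell)\in{\mathsf L}_Y$ for all $\ell\in{\mathsf L}_Y$. Conversely, suppose $\s^*(\ell)\in{\mathsf L}_Y$ for all $\ell\in{\mathsf L}_Y$. If ($\ddagger$) occurred, Lemma \ref{lem.dsd} would also put every $\s^*(\ell)$ in ${\mathsf L}_X$, which the second step rules out. Hence ($\dagger$) occurs, and Lemma \ref{lem.dsn} shows $A^\s$ is standard. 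The non-standard statement follows symmetrically by exchanging the roles of ($\dagger$) and ($\ddagger$).

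The only delicate point is the well-definedness of $\s^*$ on lines, namely that $\s^*(\ell_p)$ equals $\cV(\s^{-1}(u),\s^{-1}(v))$ as computed in Lemma \ref{lem.dsd}. This is already handled in that lemma. Beyond it, the argument is purely logical bookkeeping, and the main obstacle is simply making sure that both dichotomies are genuinely exclusive, which rests on $X\not\cong Y$ and on Lemma \ref{lem.empty}.
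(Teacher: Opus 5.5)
Your proposal is correct and follows the paper's proof essentially verbatim. Both directions come from combining Lemma \ref{lem.dsn} and Lemma \ref{lem.dsd} with the dichotomy ($\dagger$)/($\ddagger$), and Lemma \ref{lem.empty} rules out ($\ddagger$) in the converse direction; your added remarks on why the two dichotomies are exclusive (via $X\not\cong Y$ and Proposition \ref{prop.mq1}(1)) just make explicit what the paper leaves implicit.
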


\begin{proof}
Assume that $A^{\s}$ is standard. Then ($\ddagger$) does not occur by Lemma \ref{lem.dsn}, so ($\dagger$) occurs.
Thus $\s^*(\ell_p) \in {\mathsf L}_Y$ for every $\ell_p \in {\mathsf L}_Y$ by Lemma \ref{lem.dsd}.

Conversely, assume that $\s^*(\ell_p) \in {\mathsf L}_Y$ for every $\ell_p \in {\mathsf L}_Y$.
Since ${\mathsf L}_X \cap {\mathsf L}_Y = \varnothing$ by Lemma \ref{lem.empty}, we see $\s^*(\ell_p) \not \in {\mathsf L}_X$, so ($\ddagger$) does not occur by Lemma \ref{lem.dsd}.
Thus ($\dagger$) occurs. It follows from Lemma \ref{lem.dsn} that $A^{\s}$ is standard.

The assertion for the non-standard case is proved in the same way.
\end{proof}

\subsection{Twists of the Smooth Commutative Quadric}

In this subsection, we study a twisted algebra  $(k[x, y, z, w]/(f))^\s$ of a smooth commutative quadric.
Note that $\s\in \GrAut k[x, y, z, w]$ such that $\s(f)=\l f$ for some $0\neq \l\in k$ is essentially the same as $\s\in \GrAut k[x, y, z, w]/(f)$.
Recall that every smooth commutative quadric is standard irreducible.
By Lemma \ref{lem.zh}, $k[x, y, z, w]^\s$ is a quantum polynomial algebra satisfying the condition (*).

For a projective variety $X\subset \PP^{n-1}$, we define
$$\Aut (X\uparrow \PP^{n-1}):=\{\s|_X\in \Aut X\mid \s\in \Aut \PP^{n-1}\}.$$
Note that
$\Aut (X\uparrow \PP^{n-1})\cong \GrAut k[X]/k^{\times}$ holds
for any projective variety $X\subset \PP^{n-1}$.

Let $s:\PP^1\times \PP^1\to \PP^3$ be the Segre embedding, that is, $s((a, b), (c, d))=(ac, ad, bc, bd)$.  Then $Q=\Im s=\cV(xw-yz)$
 is the unique smooth quadric surface in $\PP^3$ up to isomorphism.
Let $A=S/(f)$ where $S=k[x, y, z, w]$ and $f=xw-yz\in S_2$ so that $A$ is the homogeneous coordinate ring of the smooth quadric surface $Q$ in $\PP^3$. If
$$\Phi=\begin{pmatrix} x & y \\ z & w \end{pmatrix} \quad \textrm{and} \quad \Psi=\begin{pmatrix} w & -y \\ -z & x \end{pmatrix},$$
then
$(\Phi, \Psi), (\Psi, \Phi)\in \LMF_S^2(f)$ are non-isomorphic noncommutative linear matrix factorizations of $f$, so
${\mathbb M}=\{X:=\Coker \overline{\Phi}, Y:=\Coker \overline{\Psi}\}$.

For a point $p=(a, b)\in \PP^1$,
\begin{align*}
& s(\PP^1\times p)=\cV(bx-ay, bz-aw)
=\ell_p\subset\PP^3, \\
& s(p\times \PP^1)=\cV(bx-az, by-aw)={_{(b,-a)}\ell}\subset \PP^3
\end{align*}
are lines in $Q$, so  ${\mathsf L}_X = \{\ell_p\mid p\in \PP^1\}, {\mathsf L}_Y = \{ {_p\ell}\mid p\in \PP^1\}$ are two rulings on $Q$ (see (\ref{lp})).

Let $Q=\cV(xw-yz)\subset \PP^3$ and $\s\in \Aut (Q\uparrow \PP^3)$.  Since $\s$ sends a line to a line in $\PP^3$, for every $p\in \PP^1$,  there exists $\a(p)\in \PP^1$ such that $\s({_p\ell})={_{\a(p)}\ell}$ or $\s({_p\ell})=\ell_{\a(p)}$.  If $p\neq p'\in \PP^1$, then ${_p\ell}\cap {_{p'}\ell}=\varnothing$, so $\s({_p\ell})\cap \s({_{p'}\ell})=\varnothing$.  It follows that if $\s({_p\ell})={_{\a(p)}\ell}$ for some $p\in \PP^1$, then $\s({_{p}\ell})={_{\a(p)}\ell}$ for every $p\in \PP^1$, and if $\s({_p\ell})=\ell_{\a(p)}$ for some $p\in \PP^1$, then $\s({_{p}\ell})=\ell_{\a(p)}$ for every $p\in \PP^1$ (cf. Lemma \ref{lem.dsd}). We say that $\s$ {\it preserves the rulings} if the former occurs and {\it switches the rulings} if the latter occurs.

\begin{remark} It is well known that there are natural isomorphisms
$$
\Aut (Q\uparrow \PP^3)\cong \Aut Q\cong \Aut (\PP^1\times \PP^1)\cong (\Aut \PP^1\times \Aut \PP^1)\rtimes \<\t\>,$$
where $\t:\PP^1\times \PP^1\to \PP^1\times \PP^1; \; (p, q)\mapsto (q, p)$ is an automorphism of order 2.
For $\s\in \GrAut A$, $\s^*\in \Aut (Q\uparrow \PP^3)$ preserves the rulings if and only if $s^{-1}\s^*s\in \Aut \PP^1\times \Aut \PP^1$.
\end{remark}

The following theorem shows that, for $A=k[x,y,z,w]/(xw-yz)$, exactly half of the noncommutative quadrics of the form $A^\s$ are standard, while the
other half are non-standard.

\begin{theorem} \label{thm.cotw}
	Let $A=k[x, y, z, w]/(xw-yz)$ be a smooth quadric.
	For $\s\in \GrAut A$,
	$A^{\s}$ is standard if and only if $\s^*\in \Aut(Q\uparrow \PP^3)$ preserves the rulings,
	and $A^{\s}$ is non-standard if and only if $\s^*\in \Aut(Q\uparrow \PP^3)$ switches the rulings.
\end{theorem}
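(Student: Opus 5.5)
The plan is to reduce Theorem \ref{thm.cotw} to Theorem \ref{thm:switch}, applied to $S=k[x,y,z,w]$ and $f=xw-yz$ with $\Phi=\begin{pmatrix} x & y \\ z & w \end{pmatrix}$ and $\Psi=\begin{pmatrix} w & -y \\ -z & x \end{pmatrix}$. First we verify the hypotheses. The algebra $A$ is a smooth irreducible commutative quadric and is standard, as recalled above. For standardness one can check directly that $\Omega X(1)\cong \Coker\overline{\Psi}=Y$, because the matrix factorization $(\Phi,\Psi)$ gives the resolution of $X$, so its first syzygy shifted by $1$ is $\Coker \overline{\Psi}$.

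Next we handle scalars. For $\s\in\GrAut A$, lift $\s$ to $\GrAut S$ through its action on $V=S_1$. Then $\s(f)=\l f$ for some $\l\neq 0$. Replacing $\s$ by $c\s$ with $c^2=\l$ (possible since $k$ is algebraically closed) gives $\s(f)=f$. This change does not alter $\s^*$ on $\PP^3$. It also does not alter $A^\s$ up to isomorphism, since a Zhang twist by a scalar multiple $c\s$ (with $c$ acting as $c^{i}$ on degree $i$) yields an isomorphic algebra via $a\mapsto c^{i(i-1)/2}a$ on degree $i$. If one prefers to avoid this, it is enough to note that the relevant statement of Theorem \ref{thm:switch} only used $\s(f)\in kf$.

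With these hypotheses in place, Theorem \ref{thm:switch} says the following: $A^\s$ is standard if and only if $\s^*$ maps every line of ${\mathsf L}_Y$ into ${\mathsf L}_Y$, and $A^\s$ is non-standard if and only if $\s^*$ maps every line of ${\mathsf L}_Y$ into ${\mathsf L}_X$. By the computation preceding the theorem, ${\mathsf L}_X=\{s(\PP^1\times p)\}$ and ${\mathsf L}_Y=\{s(p\times\PP^1)\}$ are exactly the two rulings of $Q$. The definition \eqref{snatural} of $\s^*$ is the automorphism of $Q=\cV(\det\Phi^\natural)$ induced by $\s$. Here we should observe that $\cV(\det\s^{-1}(\Phi)^\natural)=\cV(\s^{-1}(f))=Q$, so $\s^*\in\Aut(Q\uparrow\PP^3)$.

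Finally we translate into the language of rulings. By the dichotomy shown just before the theorem, $\s^*$ either sends all lines ${_p\ell}$ to lines ${_{\a(p)}\ell}$ (it preserves the rulings) or sends all of them to lines $\ell_{\a(p)}$ (it switches the rulings). The first case is the condition $\s^*({\mathsf L}_Y)\subset{\mathsf L}_Y$, and the second is $\s^*({\mathsf L}_Y)\subset{\mathsf L}_X$. Both equivalences of the theorem then follow immediately.

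The main obstacle is bookkeeping rather than substance. We must make sure the $\s^*$ of \eqref{snatural}, defined using $\s^{-1}$ on linear forms, agrees with the geometric automorphism whose ruling behaviour is being tested. Any mismatch, such as $\s$ versus $\s^{-1}$, is harmless, since $\s$ preserves the rulings if and only if $\s^{-1}$ does. We must also confirm that the scalar normalization $\s(f)=f$ does not affect standardness.
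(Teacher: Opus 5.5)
Your proposal is correct and is essentially the paper's proof. The paper reduces to Theorem~\ref{thm:switch} with $\Phi,\Psi$ as given, and observes that $\cV(\det \s^{-1}(\Phi)^\natural)=\cV(\det\Phi^\natural)=Q$, so that the $\s^*$ of \eqref{snatural} is the induced automorphism of $Q$; it then uses that ${\mathsf L}_X,{\mathsf L}_Y$ are the two rulings, and your explicit scalar normalization and isomorphism $A^{c\s}\cong A^{\s}$ simply spell out the paper's ``adjusting the scalar.'' One trivial slip: from $\s(f)=\l f$ you get $(c\s)(f)=c^2\l f$, so you need $c^2=\l^{-1}$ rather than $c^2=\l$.
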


\begin{proof}
Since $\Aut (Q\uparrow \PP^3)\cong \Aut Q$ and $\cV(\det \Phi^\natural)=\cV(\det \s^{-1}(\Phi)^\natural)=Q$, the automorphism $\sigma^*$ coincides with the one defined in
\eqref{snatural} in the previous subsection.
Then the result follows from Theorem \ref{thm:switch}.
\end{proof}

Let $S=T(V)/(R)$ be a quadratic algebra, where $R\subset V\otimes V$, and let $f \in S_2$.
If $\s\in \GrAut S$ such that $\s(f)=f$, then the following facts are known:
\begin{itemize}
\item $S^\s \cong T(V)/((\id \otimes \s^{-1})(R))$.
\item Under the above isomorphism, $f^\s \in S^\s$ corresponds to $(\id \otimes \s^{-1})(f) \in T(V)/((\id \otimes \s^{-1})(R))$.
\end{itemize}
For now, we identify
\[
\begin{array}{ccc}
 S^\s  & = & T(V)/((\id \otimes \s^{-1})(R)) \\
\rotatebox[origin=c]{90}{$\in$} & & \rotatebox[origin=c]{90}{$\in$} \\
f^\s & = & (\id \otimes \s^{-1})(f).
\end{array}
\]
With this interpretation,  $(\Phi, \Psi)\in \LMF^2_S(f)$ implies $(\Phi, \s^{-1}(\Psi)) \in \LMF^2_{S^{\s}}(f^\s)$. In fact,
\begin{align*}
\Phi\s^{-1}(\Psi)
&=\begin{pmatrix} \phi_1 & \phi_2 \\ \phi_3 & \phi_4 \end{pmatrix} \begin{pmatrix} \s^{-1}(\psi_1) & \s^{-1}(\psi_2) \\ \s^{-1}(\psi_3) & \s^{-1}(\psi_4) \end{pmatrix}
=\begin{pmatrix} (\id\otimes \s^{-1})(f) & 0 \\ 0 & (\id\otimes \s^{-1})(f) \end{pmatrix}
\end{align*}
in $M_2(S^{\s})$. Using this, we give an example.

\begin{example} \label{ex.coex}
Let $S=k[x, y, z, w], f=xw-yz$, and $A=S/(f)$ so that
$$\left (\Phi:=\begin{pmatrix} x & y \\ z & w \end{pmatrix}, \Psi:=\begin{pmatrix} w & -y \\ -z & x \end{pmatrix}\right)\in \LMF_{S}^2(f).$$
If $\s\in \GrAut S$ is defined by $\s(x)=w, \s(y)=y, \s(z)=z, \s(w)=x$, then $\s(f)=f$
so that $\s\in \GrAut A$.   Under the above identification, we have
$$\left (\Phi=\begin{pmatrix} x & y \\ z & w \end{pmatrix}, \s^{-1}(\Psi)=\begin{pmatrix} x & -y \\ -z & w \end{pmatrix}\right)\in \LMF_{S^\s}^2(f^\s).$$
We can check that
$$S^{\s}=k\<x, y, z, w\>/(xy-yw, xz-zw, x^2-w^2, yz-zy, yx-wy, zx-wz),$$
and $f^{\s}=x^2-yz\in Z(S^{\s})_2$ is a regular central element.
Since $$\begin{pmatrix} 1 & 0 \\ 0 & -1 \end{pmatrix}\s^{-1}(\Psi)\begin{pmatrix} 1 & 0 \\ 0 & -1 \end{pmatrix}=\begin{pmatrix} 1 & 0 \\ 0 & -1 \end{pmatrix}\begin{pmatrix} x & -y \\ -z & w \end{pmatrix}\begin{pmatrix} 1 & 0 \\ 0 & -1 \end{pmatrix}=\Phi,$$
$A^{\s}$ is a non-standard smooth irreducible noncommutative quadric.

We may also check this by geometry.  Let $Q=\cV(f)\subset \PP^3$.  For $p=(a, b)\in \PP^1$,
$$\s^*(\ell_p)=\s^*(\cV(bx-ay, bz-aw))=\cV(bw-ay, bz-ax)={_q\ell}$$
where $q=(-a, b)\in \PP^1$,
so $\s^*\in \Aut (Q\uparrow \PP^3)$ switches the rulings, hence $A^{\s}$ is a non-standard smooth irreducible noncommutative quadric by Theorem \ref{thm.cotw}.
\end{example}

\section{Sklyanin Quadrics}

In this section, we study noncommutative quadrics $A=S/(f)$, where $S$ is a $4$-dimensional (non-degenerate) Sklyanin algebra. We call such an algebra $A$ a {\it Sklyanin quadric}, and call it a {\it central Sklyanin quadric} if $f\in Z(S)_2$.
We will define a 4-dimensional Sklyanin algebra explicitly later, but, for now, we refer to the notation used in \cite {SV}, in particular, we denote a 4-dimensional Sklyanin algebra by $S=\cA(E, \cL, \t)$ where $E\subset \PP^3$ is an elliptic curve, $\cL\in \Pic E$ and $\t\in E$.

\begin{lemma}[{\cite[Corollary 1.9]{LS}}]
Every 4-dimensional Sklyanin algebra $S=\cA(E, \cL, \t)$ is a quantum polynomial algebra satisfying the condition (*).
\end{lemma}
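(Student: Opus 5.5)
This lemma is cited from \cite[Corollary 1.9]{LS}, so the plan is to check that the hypotheses used there match the definition of a quantum polynomial algebra satisfying (*) given in Definition \ref{def.qpa}. The argument would go as follows.

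First, recall from \cite{SV} (or from Smith--Stafford) the facts about $S=\cA(E,\cL,\t)$. It is a quadratic algebra with four generators and six relations. It contains a regular sequence of central elements $\Omega_1,\Omega_2\in Z(S)_2$. The quotient $S/(\Omega_1,\Omega_2)$ is isomorphic to the twisted homogeneous coordinate ring $B(E,\cL,\t)$, where $\deg\cL=4$. This is exactly the setting at the beginning of \cite[Section 2]{LS}.

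Next, derive the properties in the order the definition lists them, working up from $B$.
\begin{enumerate}
\item $B=B(E,\cL,\t)$ is noetherian, with Hilbert series $(1+t)^2/(1-t)^2$ because $\deg\cL=4$.
\item Since $\Omega_1,\Omega_2$ form a regular sequence of degree $2$, lifting gives $H_S(t)=H_B(t)/(1-t^2)^2=(1-t)^{-4}$.
\item Noetherianity lifts along central regular extensions.
\item The Gorenstein condition and finite global dimension lift by the standard Rees-type lemmas. Applying these twice yields $\gldim S=4$ and the AS-Gorenstein condition $\Ext^i_S(k,S)=\delta_{i4}k(4)$.
\end{enumerate}
This establishes that $S$ is a $4$-dimensional quantum polynomial algebra.

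Then treat the two conditions in (*). Auslander-regularity and the Cohen--Macaulay property are handled by Levasseur's theorem: these properties lift from $S/(\Omega)$ to $S$ when $\Omega$ is a central regular element of positive degree, provided the quotient is Auslander-Gorenstein and Cohen--Macaulay. So it suffices to know that $B(E,\cL,\t)$ has these properties. This holds because $B$ is a twisted homogeneous coordinate ring of a smooth curve with an ample sequence, and it is the content of \cite{LS} preceding Corollary 1.9. Lifting twice, through $S/(\Omega_1)$ and then $S$, gives (*).

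The main obstacle is the base case: proving that $B(E,\cL,\t)$ is Auslander-Gorenstein and Cohen--Macaulay with the correct Gorenstein parameter. This requires identifying the dualizing complex of $B$ via the canonical sheaf of $E$, which is trivial, together with local duality. Here I would rely directly on \cite{LS}, since the present paper only needs the conclusion.
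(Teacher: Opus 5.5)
Most of your outline matches what stands behind the citation of \cite[Corollary 1.9]{LS}; the paper itself gives nothing beyond that citation. Grant the regular central sequence $\Omega_1,\Omega_2$ with $S/(\Omega_1,\Omega_2)\cong B(E,\cL,\t)$. Then the following all lift from $B$ as you say:
\begin{enumerate}
\item the Hilbert series $H_S=H_B/(1-t^2)^2=(1-t)^{-4}$;
\item noetherianity;
\item the AS-Gorenstein condition $\Ext^i_S(k,S)\cong\delta_{i4}k(4)$, with Gorenstein parameter going $0\to 2\to 4$;
\item the Auslander--Gorenstein and Cohen--Macaulay properties, via Levasseur's theorem.
\end{enumerate}

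The step that fails is ``finite global dimension lifts by the standard Rees-type lemmas; applying these twice yields $\gldim S=4$.'' There is no finite global dimension to lift. Since $1/H_B(t)=(1-t)^2/(1+t)^2$ is not a polynomial, $k_B$ has no finite free resolution, so $\gldim B=\infty$. Likewise $1/H_{S/(\Omega_1)}(t)=(1-t)^3/(1+t)$ shows $\gldim S/(\Omega_1)=\infty$, as for every noncommutative quadric. The Rees-type bound $\operatorname{pd}_S k\le \operatorname{pd}_{S/(\Omega)}k+1$ is vacuous when the right side is infinite. Levasseur's theorem lifts the Auslander--Gorenstein condition, not Auslander-regularity.

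So your argument gives conditions (2) and (3) of Definition \ref{def.qpa} and the Auslander--Gorenstein/CM properties. It gives neither condition (1) nor the regularity half of (*). You need an independent proof that $\gldim S<\infty$. In the literature this comes from Smith--Stafford's regularity theorem: $S$ is Koszul with $H_{S^!}(t)=(1+t)^4$, so the Koszul complex is a minimal free resolution of $k_S$ of length $4$. Once $\gldim S=4$ is known, Auslander--Gorenstein plus finite global dimension gives Auslander-regularity, and the rest of your plan goes through.
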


Let $S=\cA(E,\cL,\t)$ be a 4-dimensional Sklyanin algebra. Since many results in the literature are proved under the assumption that $|\t|=\infty$, we assume, until further notice, that $|\t|=\infty$ in order to ensure that these results apply.

For $p, q\in E$, we define $L(p, q):=S/WS$ where $W\subset S_1$ is the subspace of linear forms vanishing on the line $\overline {pq}$.  For $z\in E$, we define
$${\mathsf R}_z:=\{L(p, q)\mid p, q\in E, p+q=z\}.$$
For each $z\in E$, there exists a central element $\Omega (z)\in Z(S)_2$ (unique up to non-zero scalar) with the property that $\Omega (z)\cdot L(p, q)=0$ if and only if $p+q=z$ or $p+q=-z-2\t$.

\begin{theorem}[{\cite[Theorem 10.2]{SV}}] \label{thm.10.2}
 Let $S=\cA(E, \cL, \t)$ be a 4-dimensional Sklyanin algebra and $A=S/(\Omega (z))$ where $z\in E$.  Then $A$ is smooth if and only if $z+\t\not \in E_2$, so that four singular central Sklyanin quadrics are given by $S/(\Omega (\omega-\t))$ where $\omega\in E_2$.
\end{theorem}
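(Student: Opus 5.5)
The plan is to reduce smoothness to counting the rulings, using Proposition \ref{prop.sism} together with Lemma \ref{lem.coco}, and then to compute the rulings of $A=S/(\Omega(z))$ from the known description of line modules over the Sklyanin algebra $S$.

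\textbf{Step 1 (line modules over $A$).} A line module over $A$ is the same as a line module over $S$ annihilated by $\Omega(z)$, since $\Omega(z)$ is central. I will use the description of line modules over $S$ (with $|\t|=\infty$) as the modules $L(p,q)$, $p,q\in E$, where the secant line $\overline{pq}$ is taken to be the tangent line when $p=q$. By the stated property of $\Omega(z)$, the line modules over $A$ are then exactly
$$
{\mathsf R}_z\cup{\mathsf R}_{-z-2\t}.
$$
Each ${\mathsf R}_w$ is parametrized by unordered pairs $\{p,q\}$ with $p+q=w$, that is, by $E/\langle p\mapsto w-p\rangle\cong\PP^1$. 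Moreover, ${\mathsf R}_z={\mathsf R}_{-z-2\t}$ when $2(z+\t)=0$. If $2(z+\t)\neq 0$ the two families are different, and they share no module: a common member $L(p,q)$ would need $p+q=z$ and $p+q=-z-2\t$ at once.

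\textbf{Step 2 (irreducibility).} I need $\Omega(z)$ to be irreducible in order to apply Proposition \ref{prop.sism} and Lemma \ref{lem.coco}. I expect to argue this separately: a factorization $\Omega(z)=uv$ with $u,v\in S_1$ would force $\Omega(z)$ to annihilate every $S/uS$, and hence to annihilate far more line modules than the two families above allow. Given irreducibility, Lemma \ref{lem.coco} says the line modules over $A$ form $\coprod_{X\in\mathbb M}{\mathsf R}_X$, and by Lemma \ref{lem.isom} each ${\mathsf R}_X$ is a $\PP^1$-family $\{\Coker((a,b)\overline{\Phi^0})\}$.

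\textbf{Step 3 (matching families with $\mathbb M$).} I must show that the rulings ${\mathsf R}_X$ coincide with the geometric families ${\mathsf R}_z$ and ${\mathsf R}_{-z-2\t}$; a bare count of points cannot do this, since each family is infinite. I will use incidence of lines in $\PP^3$. Within one ${\mathsf R}_X$, the two lines attached to $(a,b)\neq(c,d)$ are $\cV((a,b)\Phi^0)$ and $\cV((c,d)\Phi^0)$. Their four linear forms span $\sum k\phi_i$, which has dimension $\ge 3$ by Lemma \ref{lem.4con}. When this dimension is $4$ the two lines are disjoint.

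On the geometric side, lines in ${\mathsf R}_z$ are pairwise disjoint when $2(z+\t)\neq 0$: they form one ruling of the smooth quadric through $E$ determined by $z$. By contrast, $\overline{pq}\in{\mathsf R}_z$ and $\overline{pq'}\in{\mathsf R}_{-z-2\t}$ meet at $p$. So if $|\mathbb M|=1$, the single ruling would contain pairs of intersecting lines, which should contradict the disjointness just described. Hence $2(z+\t)\neq0$ forces $|\mathbb M|=2$, and $A$ is smooth by Proposition \ref{prop.sism}(1).

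Conversely, if $2(z+\t)=0$, the line modules form a single $\PP^1$-family. Two distinct rulings ${\mathsf R}_X\neq{\mathsf R}_Y$ would each be a $\PP^1$-family of lines inside that one family, which I expect to be impossible. Then $|\mathbb M|=1$, and $A$ is singular by Proposition \ref{prop.sism}(2). Finally, $2(z+\t)=0$ is exactly the condition $z+\t\in E_2$, which gives the four singular quadrics $S/(\Omega(\omega-\t))$ with $\omega\in E_2$.

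\textbf{Main obstacle.} The main obstacle is Step 3, which matches the abstract rulings ${\mathsf R}_X$ with the geometric families. Two points need care. First, the case $\dim\Phi^0=3$, where distinct lines of one ${\mathsf R}_X$ all meet; I would first try to rule it out using that the lines lie on the smooth quadric attached to $z$. Second, in the singular case, I must exclude that one geometric $\PP^1$-family splits into two algebraic rulings; here I would argue via irreducibility of the parameter curves.
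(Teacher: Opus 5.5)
The paper contains no proof of this statement; it quotes it from \cite[Theorem 10.2]{SV}. Your derivation from Proposition \ref{prop.sism} and Lemma \ref{lem.coco} is therefore a genuinely different route. Its outline is viable, but as written it has genuine gaps in three places.

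\textbf{Irreducibility.} Your sketch gives no contradiction. Suppose $\Omega(z)=uv$. The line modules that are quotients of $S/uS$ correspond to the secant lines lying in the plane $\cV(u)$, and there are at most six of them, not ``far more''. Worse, suppose $2z=0$, take $p$ with $2p=z$ (so $4p=0$), and let $\cV(u)$ be the hyperosculating plane at $p$. Then the only such line is the tangent $\overline{pp}$, and $L(p,p)\in{\mathsf R}_z$ is killed by $\Omega(z)$ anyway. Use Proposition \ref{prop_cenirr} instead. It applies because $\Omega(z)\in Z(S)_2=k\Omega_1+k\Omega_2$ (Remark \ref{lem.skce}). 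Its key point is that centrality of $uv$ forces $uv=vu$, hence $u\in kv$.

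\textbf{The case $2(z+\t)\neq 0$.} Your claim that the lines of ${\mathsf R}_z$ are pairwise disjoint is false in general. If two distinct lines of ${\mathsf R}_w$ meet, the plane they span cuts $E$ in $p+(w-p)+q+(w-q)$, so $2w=0$. Conversely, for $w\in E_2$ all lines of ${\mathsf R}_w$ pass through the vertex of one of the four cones of the pencil. So ${\mathsf R}_w$ consists of disjoint lines exactly when $w\notin E_2$, and nothing forbids $z\in E_2$ when $2(z+\t)\neq0$.

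Also, ${\mathsf R}_z$ and ${\mathsf R}_{-z-2\t}$ do not lie on one quadric: the ruling opposite to ${\mathsf R}_z$ is ${\mathsf R}_{-z}$ (cf.\ Remark \ref{rem.det}). Your plan to exclude $\dim\Phi^0=3$ must also fail. For the smooth quadric $S/(\Omega_2)$, the factor $\Psi$ of Lemma \ref{lem_3variables} has $\dim\Psi=3$.

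You need neither claim. If $\mathbb M=\{X\}$, your meeting pair $\overline{pq},\overline{pq'}$ forces $\dim\Phi^0=3$ by Lemma \ref{lem.4con}. Then every line of ${\mathsf R}_X$ passes through the point $\cV(\phi_1,\phi_2,\phi_3,\phi_4)$. Since $4\t\neq0$, $z$ and $-z-2\t$ are not both in $E_2$. So one of the two geometric families contains two disjoint lines, which is a contradiction.

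\textbf{The case $2(z+\t)=0$.} Here ``I expect to be impossible'' needs an actual argument, and your irreducibility idea supplies it. As in the proof of Lemma \ref{lem.cocoW}, the two entries of $(a,b)\Phi^0$ are independent for every $(a,b)$. So $(a,b)\mapsto\cV((a,b)\Phi^0)$ is a morphism $\PP^1\to\mathrm{Gr}(2,4)$. If $|\mathbb M|=2$, the images for $X$ and $Y$ would be disjoint nonempty closed sets whose union is $\{\overline{p(z-p)}\mid p\in E\}$. But that set is the image of $E$, hence irreducible, which is impossible.

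Finally, a minor point about citations. The implication $|\mathbb M|=2\Rightarrow$ smooth is the contrapositive of Proposition \ref{prop.sism}(2), and $|\mathbb M|=1\Rightarrow$ singular is the contrapositive of (1). You cite them the other way round.
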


\begin{lemma} Let $S=\cA(E, \cL, \t)$ be a 4-dimensional Sklyanin algebra and $A=S/(\Omega (z))$ where $z\in E$.  If  $A$ is smooth (so that $z+\t\not \in E_2$),
then the set of isomorphism classes of line modules over $A$ is given by the disjoint union of isomorphism classes of two families ${\mathsf R}_z$ and ${\mathsf R}_{-z-2\t}$ parametrized by $E/\pm \cong \PP^1$.
\end{lemma}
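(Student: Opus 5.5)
The plan is to combine the general structure from Section 3 with the explicit line-module description of Shelton--Vancliff. Since $A=S/(\Omega(z))$ is smooth, Theorem \ref{thm.10.2} gives $z+\t\notin E_2$. By Proposition \ref{prop.sism}(1), $A$ is a smooth irreducible noncommutative quadric once irreducibility of $\Omega(z)$ is known. I expect that $\Omega(z)$ is irreducible because $S$ has no elements of degree one that are normal. Failing that, I would note that Lemma \ref{lem.coco} only uses $\mathbb M$ and the rank-2 linear factorizations, so the argument can proceed without it. Then Lemma \ref{lem.coco} says that the line modules over $A$ form exactly the disjoint union of two rulings ${\mathsf R}_X\sqcup{\mathsf R}_Y$. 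Each ruling is parametrized bijectively by $\PP^1$, by Lemma \ref{lem.isom}.

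Next I would identify these with ${\mathsf R}_z$ and ${\mathsf R}_{-z-2\t}$. Every line module over $A$ is a line module over $S$ annihilated by $\Omega(z)$. By the classification of line modules over $S$ with $|\t|=\infty$ (Levasseur--Smith), every such module has the form $L(p,q)$ for some $p,q\in E$, since the lines in the line scheme that are annihilated by a central element are secant lines of $E$. By the defining property of $\Omega(z)$, $L(p,q)$ is annihilated by it exactly when $p+q=z$ or $p+q=-z-2\t$. So the set of line modules is ${\mathsf R}_z\cup{\mathsf R}_{-z-2\t}$. The condition $z+\t\notin E_2$ is equivalent to $z\neq -z-2\t$, and this makes the two sets disjoint. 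The secant line $\overline{pq}$ determines the unordered pair $\{p,q\}$, so $L(p,q)\cong L(p',q')$ iff $\{p,q\}=\{p',q'\}$. Hence the pairs with $p+q=z$ correspond to $\{p,\,z-p\}$, which is the quotient of $E$ by the involution $p\mapsto z-p$. After translating, this is $E/\pm\cong\PP^1$, and likewise for $-z-2\t$.

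Finally, for consistency I would check that ${\mathsf R}_z$ is one of the rulings ${\mathsf R}_X$. Lines in a single family ${\mathsf R}_z$ form a connected $\PP^1$. Each ${\mathsf R}_X$ is the image of a $\PP^1$ under the map $(a,b)\mapsto\Coker((a,b)\overline{\Phi^0})$, so it is also connected. By disjointness and a count of two components, the two decompositions agree. This step is not strictly needed for the statement.

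I expect the main obstacle to be the step that every line module over $A$ is some $L(p,q)$. The line scheme of $S$ is larger than the set of secant lines of $E$: it also contains lines through the four points $e_i$ on the singular quadrics. So one must rule out that $\Omega(z)$ annihilates one of these extra lines. I would try to use that such lines lie on the quadrics $\cV(\Omega(\omega-\t))$ associated with the singular cases, together with $z+\t\notin E_2$. Alternatively, I would count: each ruling ${\mathsf R}_X$ is exactly a $\PP^1$, while ${\mathsf R}_z$ and ${\mathsf R}_{-z-2\t}$ already give two disjoint $\PP^1$'s of line modules over $A$. By Lemma \ref{lem.coco}, then, there is no room for additional line modules.
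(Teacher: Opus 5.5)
The gap is at the step you flagged yourself: nothing in the proposal shows that every line module over $A$ lies in ${\mathsf R}_z\cup{\mathsf R}_{-z-2\t}$. Your first justification (``the lines in the line scheme annihilated by a central element are secant lines'') only restates what has to be proved. That is precisely what the paper imports from \cite[Section 6]{LS}, and the citation is its entire proof. Your fallback count proves nothing as stated, since a proper subset of $\PP^1\sqcup\PP^1$ can have the same cardinality as the whole.

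The paragraph you called ``not strictly needed'' is the load-bearing one, and connectedness is not enough there. You need closedness and irreducibility in the Grassmannian $G$ of $2$-dimensional subspaces of $S_1$:
\begin{enumerate}
\item A graded isomorphism $A/uA+vA\cong A/u'A+v'A$ forces $ku+kv=ku'+kv'$. So isomorphism classes of line modules over $A$ inject into $G$.
\item For $X=\Coker(\phi[1])(1)\in\mathbb M$, the map $\PP^1\to G$, $(a,b)\mapsto k(a\phi_1+b\phi_3)+k(a\phi_2+b\phi_4)$, is a morphism. The two forms are independent by Lemma \ref{lem.irr}(2) applied to $P\Phi^0$ for suitable $P\in\GL_2(k)$, as in the proof of Lemma \ref{lem.lcor}. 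The map is injective by Lemma \ref{lem.isom}. Hence ${\mathsf R}_X$ corresponds to an irreducible closed curve $C_X$. Likewise ${\mathsf R}_Y$ gives $C_Y$, and $C_X\cap C_Y=\varnothing$.
\item The map $p\mapsto\overline{p(z-p)}$ is a non-constant morphism $E\to G$; it factors through the secant map on $E^{(2)}$, tangent lines included. So ${\mathsf R}_z$ corresponds to an irreducible closed curve $D_z$.
\item $D_z\subseteq C_X\sqcup C_Y$, because each such $L(p,q)$ is a line module over $S$ killed by $\Omega(z)$. Irreducibility puts $D_z$ inside one of $C_X,C_Y$, and equal dimensions give equality.
\item The same holds for $D_{-z-2\t}$. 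Since $D_z\cap D_{-z-2\t}=\varnothing$, the two land in different curves.
\end{enumerate}
Hence $D_z\sqcup D_{-z-2\t}=C_X\sqcup C_Y$, which is the missing inclusion.

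There is a second, smaller gap. Irreducibility of $\Omega(z)$ is not optional: Lemmas \ref{lem.irr}, \ref{lem.4con}, \ref{lem.isom}, \ref{lem.cocoW} and Proposition \ref{prop.sism} all use it, hence so does Lemma \ref{lem.coco}. So your fallback (that the argument proceeds without it) is false. Also, ``$S$ has no normal elements of degree one'' does not by itself rule out a factorization $uv$ of a central element. Cite Proposition \ref{prop_cenirr} instead, together with $Z(S)_2=k\Omega_1+k\Omega_2$ from Remark \ref{lem.skce}.

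With both repairs, your route genuinely differs from the paper's bare citation. It uses only Section 3 plus the easy inputs from \cite{LS}: that each $L(p,q)$ is a line module over $S$, and that $\Omega(z)$ kills ${\mathsf R}_z\cup{\mathsf R}_{-z-2\t}$. It never needs to know which other lines of the line scheme of $S$ might be annihilated by $\Omega(z)$.
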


Note that
${\mathsf R}_z={\mathsf R}_{-z-2\t}$ if and only if $z+\t\in E_2$.

\begin{proof} This follows from \cite [Section 6]{LS}.
\end{proof}

\begin{lemma}  \label{lem.SS}
Let $S=\cA(E, \cL, \t)$ be a 4-dimensional Sklyanin algebra and $A=S/(\Omega (z))$ where $z\in E$. For $L(p, q)\in {\mathsf R}_z$ and $r, s\in E$ such that $p+q+r+s=0$, there exists an exact sequence
$$0\to L(r-\t, s-\t)(-1)\to A/aA\to L(p, q)\to 0$$
where $L(r-\t, s-\t)\in {\mathsf R}_{-z-2\t}$ and $\cV(a)\subset \PP^3$ is a secant plane spanned by $p, q, r, s\in E$.
\end{lemma}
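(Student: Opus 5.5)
This lemma is essentially the geometric construction from \cite[Section 7]{SV} and \cite[Section 6]{LS}. I would prove it by realizing $L(p,q)$ and a $\tau$-shifted line module as the two pieces of a plane module $A/aA$.

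\emph{Step 1: the plane and the surjection.} Since $E\subset\PP^3$ is a degree $4$ elliptic curve with $p+q+r+s=0$ in the group law (with origin chosen so that hyperplane sections are exactly the divisors summing to $0$), there is a unique hyperplane $\cV(a)$, $0\neq a\in S_1$, with $\cV(a)\cap E=\{p,q,r,s\}$. In particular $\cV(a)\supset\overline{pq}$, so $a\in W$, where $W$ is the space of linear forms vanishing on $\overline{pq}$. Hence $aS\subset WS$, and we get a surjection $S/aS\to L(p,q)$. Because $\Omega(z)$ annihilates $L(p,q)$ (as $p+q=z$), this factors as $A/aA\to L(p,q)$. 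Here I would use that $\Omega(z)\in aS+\dots$ is not needed: it suffices that $A/aA=S/(aS+\Omega(z)S)$ maps onto $L(p,q)=S/WS$ and $\Omega(z)\in WS$.

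\emph{Step 2: Hilbert series.} Since $S$ is a domain and $\Omega(z)$ is central and regular, $A/aA$ has Hilbert series $(1+t)/(1-t)^2$. Indeed $a$ is regular on $A$: otherwise $\Omega(z)$ and $a$ would share a factor, contradicting irreducibility of $\Omega(z)$, which holds since $A$ is smooth irreducible. Therefore the kernel $K$ has Hilbert series $t/(1-t)^2$, so $K=K'(-1)$ with $H_{K'}=(1-t)^{-2}$.

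\emph{Step 3: identifying $K'$.} First, $K$ is generated in degree $1$. To see this, note that $K_1$ is $W/ka$, which is $1$-dimensional, and it suffices to show that $K$ is cyclic. I would argue this via the point modules. The plane module $S/aS$ has point scheme the plane $\cV(a)$ meets $E$ (together with the point modules on it), and the line-module description in \cite{LS} says every line module is $S/W'S$ with $W'$ the forms vanishing on a secant line. The standard argument of \cite[Prop.~6.x]{LS} and \cite[Prop.~7.4]{SV} then computes $K'\cong L(r-\t,s-\t)$: one checks that the generator of $K_1$ is annihilated on the right by the two linear forms vanishing on $\overline{(r-\t)(s-\t)}$, using the relation between left/right shifts by $\t$ in the Sklyanin multiplication. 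This gives a surjection $L(r-\t,s-\t)\to K'$, which is an isomorphism by comparing Hilbert series.

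\emph{Step 4: the ruling.} Finally, $(r-\t)+(s-\t)=-(p+q)-2\t=-z-2\t$, so $L(r-\t,s-\t)\in{\mathsf R}_{-z-2\t}$.

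The main obstacle is Step 3, specifically pinning down the annihilator of the degree-$1$ generator of the kernel. I would try to import it directly from the proof of \cite[Proposition 7.4]{SV}, or from the exact sequences for plane modules in \cite{LS}, rather than recomputing with Sklyanin relations.
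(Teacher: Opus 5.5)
Steps 1, 2 and 4 are fine in outline, but Step 3 is the whole content of the lemma, and it is not proved. Moreover, the source you propose to import it from cannot supply it. As this paper uses it (Proposition \ref{prop.321}), the argument of \cite[Proposition 7.4]{SV} is a general construction on smooth quadrics. Given line modules $L, L'$ with $\Omega L'(1)\cong\Omega(\Omega L(1))(1)$, it produces the exact sequence, but it says nothing about \emph{which} concrete line module occurs, so it cannot see the shift by $\t$.

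Identifying the kernel as $L(r-\t,s-\t)(-1)$, and hence as a member of ${\mathsf R}_{-z-2\t}$, is exactly the Sklyanin-specific input that Theorem \ref{thm.Skl} needs in order to decide rulings. The paper takes it from the explicit computation in the proof of \cite[Lemma 4.5]{SS}. Your sentence ``one checks that the generator of $K_1$ is annihilated by the forms vanishing on $\overline{(r-\t)(s-\t)}$'' is precisely that computation, and it is left undone.

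\textbf{Closing the gap along your lines.}

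\emph{Cyclicity.} This needs no point modules. $K=WA/aA$ is generated by the class $\bar v$ of any $v\in W\setminus ka$. So $K\cong (A/J)(-1)$ with $J$ the right annihilator of $\bar v$, and $U:=J_1$ is $2$-dimensional.

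\emph{The annihilator computation.} For $x\in U$, write $vx=ay+c\,\Omega(z)$ with $y\in S_1$ and $c\in k$. The missing ingredient is that both the defining relations of $S$ and a lift $\tilde\omega\in S_1\otimes S_1$ of $\Omega(z)$ vanish on the graph of translation by $\t$ on $E$. The latter holds because $\Omega(z)$ lies in the kernel of $S\to B(E,\cL,\t)$, which is more than centrality. Evaluate the relation $v\otimes x-a\otimes y-c\,\tilde\omega$ at the points of the graph over $r$ and $s$, where $a$ vanishes. This gives $v(r)x(r-\t)=v(s)x(s-\t)=0$, for the appropriate orientation of the graph.

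\emph{Conclusion.} If $r\neq s$ and $r,s\notin\{p,q\}$, then $v(r),v(s)\neq 0$, since a secant line meets $E$ only in $p,q$. Hence $U$ is exactly the space of forms vanishing at $r-\t$ and $s-\t$. The map $L(r-\t,s-\t)\to A/J$ is then an isomorphism by your Hilbert series comparison. The degenerate configurations ($r$ or $s$ in $\{p,q\}$, or $r=s$) need a separate tangency or limit argument.

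\textbf{A smaller point in Step 2.} The reason ``$a$ and $\Omega(z)$ would share a factor'' is not a valid argument in the noncommutative setting, and smoothness is not a hypothesis of the lemma. Instead use two facts:
\begin{enumerate}
\item $S/aS$ is $3$-critical by \cite[Corollary 1.11]{LS}.
\item $\Omega(z)\notin aS$, by the irreducibility in Proposition \ref{prop_cenirr}.
\end{enumerate}
If right multiplication by the central element $\Omega(z)$ on $S/aS$ had a nonzero kernel, its nonzero image would have GK-dimension at most $2$. That is impossible for a nonzero submodule of a $3$-critical module. So $\Omega(z)$ is regular on $S/aS$, and $H_{A/aA}=(1+t)(1-t)^{-2}$ as you claim.
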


\begin{proof} This follows from the proof of \cite [Lemma 4.5]{SS}.
\end{proof}

The next theorem indicates that ``generic'' smooth noncommutative quadrics are standard.

\begin{theorem}\label{thm.Skl} Let $S=\cA(E, \cL, \t)$ be a 4-dimensional Sklyanin algebra and $A=S/(\Omega (z))$ where $z\in E$.  If  $A$ is smooth, then
$A$ is standard.
\end{theorem}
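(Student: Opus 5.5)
The plan is to apply Corollary \ref{cor.mq}(1) directly, using the explicit exact sequence of Lemma \ref{lem.SS}. Since $A$ is smooth, Theorem \ref{thm.10.2} gives $z+\t\notin E_2$, so ${\mathsf R}_z\neq {\mathsf R}_{-z-2\t}$. By the preceding lemma, the line modules over $A$ form the disjoint union of the two families ${\mathsf R}_z$ and ${\mathsf R}_{-z-2\t}$.

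The first step is to identify these two families with the two rulings ${\mathsf R}_X,{\mathsf R}_Y$ of Lemma \ref{lem.coco}. First, $f=\Omega(z)$ must be irreducible, so that $A$ is a smooth irreducible noncommutative quadric and Corollary \ref{cor.mq} applies. To see this, suppose $f$ were reducible. A degree-one factor $u$ would give many line modules annihilated by $f$ lying on the plane $\cV(u)$, and the line modules $L(p,q)$ would fail to be parametrized by two $\PP^1$-families as above. Alternatively, one can cite the irreducibility result (Proposition \ref{prop_cenirr}) announced in the introduction.

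Next, Lemma \ref{lem.coco} says that $L\mapsto \Omega L(1)$ is constant exactly on each ${\mathsf R}_X$. Each ${\mathsf R}_z$ is connected, being parametrized by $E/\pm\cong\PP^1$, so it must lie inside a single ${\mathsf R}_X$. The isomorphism $\Omega L(1)\cong X$ is a discrete invariant, and on an irreducible family it should be constant. Failing that, we can compare Hilbert-series data or use the equality of cardinality-type decompositions: two disjoint families on one side, $\coprod_{X\in\mathbb M}{\mathsf R}_X$ with $|\mathbb M|=2$ on the other. So $\{{\mathsf R}_z,{\mathsf R}_{-z-2\t}\}=\{{\mathsf R}_X,{\mathsf R}_Y\}$.

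Now take any $L(p,q)\in{\mathsf R}_z$ and choose $r,s\in E$ with $p+q+r+s=0$. Lemma \ref{lem.SS} gives
$$0\to L(r-\t,s-\t)(-1)\to A/aA\to L(p,q)\to 0$$
with $L(r-\t,s-\t)\in{\mathsf R}_{-z-2\t}$, which is a different ruling from that of $L(p,q)$. Corollary \ref{cor.mq}(1) then yields that $A$ is standard.

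The main obstacle is the matching of the geometric families ${\mathsf R}_z$ with the rulings ${\mathsf R}_X$ defined through $\mathbb M$. For this I would argue more rigidly via Proposition \ref{prop.321}. If ${\mathsf R}_z$ met both ${\mathsf R}_X$ and ${\mathsf R}_Y$, then Lemma \ref{lem.SS} applied to every member would produce sequences with $L'$ ranging over ${\mathsf R}_{-z-2\t}$. Proposition \ref{prop.321} forces $\Omega L'(1)\cong\Omega(\Omega L(1))(1)$, so the map $L\mapsto \Omega L(1)$ would be compatible between the families. Combined with the disjointness and the counting, this forces each family to be exactly one ruling. A secondary concern is the standing assumption $|\t|=\infty$ needed for the cited results, which I would keep as in the section.
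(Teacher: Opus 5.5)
Your route is viable but differs from the paper's. The step that carries all the weight is matching ${\mathsf R}_z,{\mathsf R}_{-z-2\tau}$ with ${\mathsf R}_X,{\mathsf R}_Y$, and most of your arguments for it do not work.

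\begin{itemize}
\item The connectedness claim (``on an irreducible family it should be constant'') is only a heuristic. Nothing in the paper puts a topology on the set of line modules; one could do this via the Grassmannian of lines, but you would have to set it up.
\item Hilbert series separate nothing here. Every line module has $H_L(t)=(1-t)^{-2}$, and both members of $\mathbb M$ have Hilbert series $2(1-t)^{-3}$.
\item The ``counting'' comparison is a non sequitur: two partitions of one set into two pieces need not agree.
\end{itemize}

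Only your last argument, via Proposition \ref{prop.321}, is sound, and it needs to be written out. Fix $L'=L(p',q')\in{\mathsf R}_{-z-2\tau}$. For every $L=L(p,q)\in{\mathsf R}_z$, the choice $r=p'+\tau$, $s=q'+\tau$ gives $p+q+r+s=0$. So Lemma \ref{lem.SS} yields an exact sequence $0\to L'(-1)\to A/aA\to L\to 0$ with this same $L'$, and Proposition \ref{prop.321} gives $\Omega(\Omega L(1))(1)\cong \Omega L'(1)$. Since $X\mapsto \Omega X(1)$ is an involution of $\mathbb M$ (Lemma \ref{lem.mphi}(3)), $\Omega L(1)$ is constant on ${\mathsf R}_z$, and by symmetry on ${\mathsf R}_{-z-2\tau}$. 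The two families exhaust the line modules, and ${\mathsf R}_X,{\mathsf R}_Y$ are disjoint and nonempty, so the families lie in different rulings. Your final appeal to Corollary \ref{cor.mq}(1) then goes through. For irreducibility, cite Proposition \ref{prop_cenirr} together with $Z(S)_2=k\Omega_1+k\Omega_2$ (Remark \ref{lem.skce}) rather than the plane heuristic.

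The paper avoids the matching entirely by arguing by contradiction. If $A$ were non-standard, Proposition \ref{prop.321} would force every sequence from Lemma \ref{lem.SS} to join two line modules in the same ruling. Such sequences link every member of ${\mathsf R}_z$ to every member of ${\mathsf R}_{-z-2\tau}$, so all line modules would lie in a single ruling, contradicting $|\mathbb M|=2$. This uses the same ingredient as your corrected argument (cross-family linking from Lemma \ref{lem.SS} plus Proposition \ref{prop.321}), but needs no constancy or injectivity step. In return, your direct version gives the identification $\{{\mathsf R}_z,{\mathsf R}_{-z-2\tau}\}=\{{\mathsf R}_X,{\mathsf R}_Y\}$ of the geometric families with the matrix-factorization rulings, which the paper does not state.
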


\begin{proof}  (See the proof of \cite [Proposition 10.1]{SV}.)  Suppose that $A=S/(\Omega (z))$ is non-standard smooth.  By Theorem \ref{thm.10.2}, $z+\t\not \in E_2$.

If $L:=L(p, q), L':=L(p', q')$ are in the same family, then there exist $r, s\in E$ such that $p+q+r+s=z+r+s=p'+q'+r+s=0$.  If $\cV(a), \cV(b)\subset \PP^3$ are secant planes spanned by $p, q, r, s$ and $p', q', r, s$, then there exists a line module $L'':=L(r-\t, s-\t)$ such that
\begin{align*}
0\to L''(-1)\to A/aA\to L\to 0, \\
0\to L''(-1)\to A/bA\to L'\to 0
\end{align*}
are exact sequences by Lemma \ref{lem.SS},
so $L, L', L''$ are in the same ruling by Corollary \ref{cor.mq}.

If $L=L(p, q), L'=L(p', q')$ are in different families, then $p+q+p'+\t+q'+\t=0$.  If $\cV(a)\subset \PP^3$ is a secant plane spanned by $p, q, p'+\t, q'+\t$, then there exists an exact sequence
$$0\to L'(-1)\to A/aA\to L\to 0$$
by Lemma \ref{lem.SS},
so $L, L'$ are in the same ruling by Corollary \ref{cor.mq}.

This contradicts the fact that there are two rulings.
\end{proof}

For the rest, we will show that every smooth central Sklyanin quadric is in fact standard (without the assumption $|\t|=\infty$) by explicit computations, identifying four singular central Sklyanin quadrics.

\begin{lemma} Let $S$ be a quantum polynomial algebra and $f \in Z(S)_2$. If there is an indecomposable matrix factorization $(\Phi^0, \Phi^1)\in \NMF_S^{\ZZ}(f)$ of rank $r$ such that $(\Phi^0, \Phi^1)\cong (\Phi^1, \Phi^0)$, then there exists an indecomposable matrix factorization $(\Psi, \Psi)\in \NMF_S^{\ZZ}(f)$ of rank $r$ (such that $(\Phi^0, \Phi^1)\cong (\Psi, \Psi)$).
\end{lemma}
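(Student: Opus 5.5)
The approach is to turn the given isomorphism $(\Phi^0,\Phi^1)\cong(\Phi^1,\Phi^0)$ into an \emph{involutive} isomorphism $\phi\to\phi[1]$, and then use that involution to rewrite the factorization in the form $(\Psi,\Psi)$.

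\textbf{Setup.} Write $\phi=\{\Phi^i\}$. Since $f$ is central, its normalizing automorphism is the identity. By Remark~\ref{rem.lambda}(2) (i.e.\ \cite[Theorem 4.4]{MUnmf}) the factorization is then $2$-periodic: $\Phi^{i+2}=\Phi^i$ and $F^{i+2}=F^i(-2)$. Hence $\phi[2]=\phi(-2)$, and I will identify $\phi[2]$ with $\phi$ up to this shift. The hypothesis says there is an isomorphism
\[
\mu=\{\mu^i\}\colon \phi\to\phi[1],\qquad \mu^i\colon F^i\to F^{i+1},\qquad \Phi^{i}\mu^{i+1}=\mu^{i}\Phi^{i},
\]
where the last identity is the commuting square (up to the shift conventions).

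\textbf{Step 1: the automorphism $\theta$.} Put $\theta:=\mu[1]\circ\mu$. This is an automorphism of $\phi$, using the identification $\phi[2]=\phi$. Periodicity gives $\mu[2]=\mu$, and from this
\[
\mu\theta=\mu\mu[1]\mu=\theta[1]\mu .
\]
So $\mu$ intertwines $\theta$ with $\theta[1]$, and therefore intertwines $p(\theta)$ with $p(\theta[1])$ for every polynomial $p$.

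\textbf{Step 2: locality of $\End(\phi)$.} I need $\End_{\NMF^{\ZZ}_S(f)}(\phi)$ to be a finite-dimensional local $k$-algebra. Finite dimensionality holds because $\phi$ is determined by $\Phi^0,\Phi^1$, so an endomorphism is determined by the pair of degree-preserving maps $(\mu^0,\mu^1)$ between finitely generated graded free modules. A finite-dimensional algebra is Krull--Schmidt, so indecomposability of $\phi$ makes it local. Since $k$ is algebraically closed, we get $\theta=c\,\id+n$ with $c\neq0$ and $n$ nilpotent. Because $\operatorname{char}k=0$, the truncated binomial series gives a polynomial $p$ with $p(\theta)^2=\theta^{-1}$ (the series terminates since $n$ is nilpotent).

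\textbf{Step 3: an involutive isomorphism.} Set $\mu':=\mu\,p(\theta)$. By Step 1,
\[
\mu'[1]\mu'=\mu[1]\,p(\theta[1])\,\mu\,p(\theta)=\mu[1]\mu\,p(\theta)^2=\theta\theta^{-1}=\id .
\]
In particular $U:=\mu'^0\colon F^0\to F^1$ is invertible, with inverse given by $\mu'^1$ after the periodic identification $F^2=F^0(-2)$.

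\textbf{Step 4: building $\Psi$.} Define $\Psi:=\Phi^0U$, which is a square matrix after the appropriate degree shift. The intertwining relation $\Phi^1=U\Phi^0U$ follows from the commuting squares for $\mu'$ together with $\mu'^1=U^{-1}$. Then
\[
\Psi\Psi=\Phi^0U\Phi^0U=\Phi^0\Phi^1=fE_r ,
\]
so $(\Psi,\Psi)\in\NMF^{\ZZ}_S(f)$ has rank $r$. The pair $(\id_{F^0},U)$, extended periodically, gives an isomorphism $(\Psi,\Psi)\to(\Phi^0,\Phi^1)$. Hence $(\Psi,\Psi)$ is indecomposable.

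\textbf{Expected obstacle.} The main difficulty is the bookkeeping of shifts. I have to make sure the identification $\phi[2]=\phi(-2)$ is compatible with composing $\mu$ and $\mu[1]$, so that $\theta$ really lies in $\End(\phi)$. I also have to derive $\Phi^1=U\Phi^0U$ with the correct sides, since $S$ is noncommutative. I would check both directly at the level of matrices, using Remark~\ref{rem.lambda}. The locality argument in Step 2 is standard once finite dimensionality is granted.
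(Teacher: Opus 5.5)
Your proof is correct. The paper gives no argument of its own for this lemma; it only defers to the proof of \cite[Proposition 3.12]{CKMW}. What you have written is a complete, self-contained version of the standard Kn\"orrer-type argument: you normalize an isomorphism $\phi\to\phi[1](1)$ to an involution by composing with $p(\theta)=\theta^{-1/2}$ in the local algebra $\End(\phi)$, and then put $\Psi=\Phi^0U$. What this buys over the paper is that the reader does not need the reference.

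The bookkeeping you were worried about closes up as follows.

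\begin{enumerate}
\item The isomorphism must be read as $\mu\colon\phi\to\phi[1](1)$, and this shift is forced. Indeed, $\phi\cong\phi[1](m)$ gives $\phi\cong\phi[2](2m)=\phi(2m-2)$, hence $m=1$. With this reading, $\theta=\mu[1](1)\circ\mu$ is a genuine degree-preserving endomorphism of $\phi$.
\item Periodicity needs $f\neq 0$ regular, as in the paper's setting. It then follows directly from $f\Phi^{i+2}=\Phi^i\Phi^{i+1}\Phi^{i+2}=\Phi^if=f\Phi^i$.
\item The same manipulation applied to a morphism gives $f\mu^{i+2}=\mu^if=f\mu^i$. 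This, not the fact that $\phi$ is determined by $(\Phi^0,\Phi^1)$, is the actual reason that morphisms are determined by $(\mu^0,\mu^1)$. It is also why $\End(\phi)$ is finite-dimensional and why $\mu[2](2)=\mu$ holds, including for the modified $\mu'$.
\item Locality of $\End(\phi)$ uses that idempotents split in $\NMF^{\ZZ}_S(f)$. This holds because the image of an idempotent endomorphism of a finitely generated graded free module over a connected graded algebra is graded free, and the maps $\phi^i$ restrict to these images.
\end{enumerate}

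With these points made explicit, Steps 3 and 4 go through exactly as you wrote them. They yield $\mu'^1=U^{-1}$, $\Phi^1=U\Phi^0U$, $\Psi^2=fE_r$, and the isomorphism $(\id,U)\colon(\Psi,\Psi)\to(\Phi^0,\Phi^1)$.
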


\begin{proof} This follows from the proof of \cite[Proposition 3.12]{CKMW}.
\end{proof}

\begin{lemma} \label{lem.isns}
Let $A=S/(f)$ be an irreducible noncommutative quadric.  If  $f \in Z(S)_2$, then we have the following criteria:
\begin{enumerate}
\item{} $A$ is singular if and only if there exists a unique $(\Phi, \Phi)\in \LMF_S^2(f)$ up to isomorphism.
\item{} $A$ is standard smooth if and only if there exists $(\Phi, \Psi)\in \LMF_S^2(f)$ such that $(\Phi, \Psi)\not \cong (\Psi, \Phi)$.
\item{} $A$ is non-standard smooth if and only if there exist $(\Phi, \Phi), (\Psi, \Psi)\in \LMF_S^2(f)$ such that $(\Phi, \Phi)\not \cong (\Psi, \Psi)$.
\end{enumerate}
\end{lemma}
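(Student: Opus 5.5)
The plan is to translate everything into statements about the two-periodic structure of the noncommutative linear matrix factorizations. When $f$ is central, the normalizing automorphism is the identity. So for $\phi=(\Phi^0,\Phi^1)\in \LMF^2_S(f)$ the factorization is $2$-periodic, and $\phi[1]=(\Phi^1,\Phi^0)$ again lies in $\LMF^2_S(f)$. Concretely, $\Phi^0\Phi^1=fE_2$ forces $\Phi^1\Phi^0=fE_2$ because $f$ is central and $S$ is a domain.

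By Lemma \ref{lem.mphi}(1), indecomposable objects of $\LMF^2_S(f)$ correspond bijectively (up to isomorphism) to $\mathbb M$ via $\phi\mapsto \Coker\phi$. Since $f$ is irreducible, every object of $\LMF^2_S(f)$ is indecomposable: a decomposable one would be a sum of rank-one linear factorizations, and then $f$ would factor. Moreover, $\Coker(\phi[1](1))\cong \Omega(\Coker\phi)(1)$. Hence, for $X=\Coker\phi$, we have $\Omega X(1)\cong X$ if and only if $\phi\cong\phi[1]$, that is, $(\Phi^0,\Phi^1)\cong(\Phi^1,\Phi^0)$.

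Next I combine this with Proposition \ref{prop.sism}. If $A$ is singular, then $\mathbb M=\{X\}$ has one element, so $\Omega X(1)\cong X$. Thus $(\Phi^0,\Phi^1)\cong(\Phi^1,\Phi^0)$, and the preceding lemma (from \cite{CKMW}) gives an isomorphic factorization of the form $(\Psi,\Psi)$. It is unique up to isomorphism because $|\mathbb M|=1$. If $A$ is smooth, $\mathbb M=\{X,Y\}$. In the standard case $\Omega X(1)\cong Y\not\cong X$, so $(\Phi,\Psi):=\phi$ satisfies $(\Phi,\Psi)\not\cong(\Psi,\Phi)$. In the non-standard case both $X$ and $Y$ are $\Omega(-)(1)$-fixed, so the lemma yields $(\Phi,\Phi)$ and $(\Psi,\Psi)$ representing $X$ and $Y$, which are non-isomorphic.

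For the converses I use the fact that exactly one of the three cases (singular, standard smooth, non-standard smooth) occurs, together with the following observations.
\begin{enumerate}
\item If $(\Phi,\Psi)\not\cong(\Psi,\Phi)$, then $X\not\cong\Omega X(1)$. This is impossible in the singular and non-standard cases, so it gives (2).
\item If there are two non-isomorphic factorizations $(\Phi,\Phi)$ and $(\Psi,\Psi)$, then $|\mathbb M|=2$ and both elements are $\Omega(-)(1)$-fixed, which is the non-standard case. This gives (3).
\item If there is a unique factorization $(\Phi,\Phi)$, then the standard case is excluded, because there no element of $\mathbb M$ is fixed and hence no $(\Phi,\Phi)$ exists. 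The non-standard case is excluded by uniqueness. Hence $A$ is singular, giving (1).
\end{enumerate}
For (1) I read ``unique up to isomorphism'' as meaning that there is exactly one isomorphism class of factorizations of the shape $(\Phi,\Phi)$.

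The main obstacle I expect is the identification $\Coker(\phi[1](1))\cong\Omega(\Coker\phi)(1)$ together with the reduction of ``$\phi\cong\phi[1]$'' to an isomorphism in $\LMF^2_S(f)$. For the first, I would argue via the linear resolution used in the proof of Lemma \ref{lem.lphi}. For the second, I would use Lemma \ref{lem.pp3}, which makes the functor to $\CM^0(A)$ fully faithful, so that an isomorphism of cokernels lifts to an isomorphism of factorizations. The application of the \cite{CKMW} lemma also needs care: one must check that its output $(\Psi,\Psi)$ is again linear of rank $2$, and this follows because it is isomorphic to $\phi\in\LMF^2_S(f)$ and isomorphisms preserve the degree data of the free modules.
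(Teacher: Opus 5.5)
Your argument is correct and follows the paper's approach. The paper's proof just cites Proposition \ref{prop.sism} (the size of $\mathbb M$) and Lemma \ref{lem.mphi}(1) (indecomposable objects of $\LMF^2_S(f)$ correspond to $\mathbb M$), and implicitly uses the preceding \cite{CKMW} lemma and $\Omega X(1)\cong\Coker(\phi[1](1))$, exactly as you do; your extra checks (every rank-$2$ linear factorization is indecomposable since $f$ is irreducible, and the $(\Psi,\Psi)$ from the \cite{CKMW} lemma is again linear of rank $2$) only fill in details the paper leaves implicit, and the one slip is writing $\phi\cong\phi[1]$ where you mean $\phi\cong\phi[1](1)$.
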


\begin{proof}
This lemma follows from Proposition \ref{prop.sism}
and Lemma \ref{lem.mphi}(1).
\end{proof}

	Let $(\alpha_1,\alpha_2,\alpha_3)\in k^3$ be parameters satisfying
	$
	\alpha_1 + \alpha_2 + \alpha_3 + \alpha_1\alpha_2\alpha_3 = 0
	$ and $\alpha_i\neq 0,1,-1$ for all $i=1,2,3$. The $4$-dimensional (non-degenerate) Sklyanin algebra
	$S = S(\alpha_1,\alpha_2,\alpha_3)$ is presented as the quadratic algebra
	\[
	S(\alpha_1,\alpha_2,\alpha_3)
	=
	k\langle x_0,x_1,x_2,x_3\rangle
	\big/
	(r^-_i,\,r^+_i \mid i=1,2,3),
	\]
	where
	\begin{align*}
		r^-_i &:= x_0x_i - x_i x_0 - \alpha_i(x_j x_k + x_k x_j),\\
		r^+_i &:= x_0x_i + x_i x_0 - (x_j x_k - x_k x_j)
	\end{align*}
	for $(i, j, k)\in \mathrm{Cyc}(1,2,3):=\{(1, 2, 3), (2, 3, 1), (3, 1, 2)\}$.

\begin{lemma}[{\cite[Corollary 3.9]{SS2}}]
	If $S(\alpha_1,\alpha_2,\alpha_3)$ is a $4$-dimensional Sklyanin algebra, then
	\[
	\Omega_1 := x_0^2 - x_1^2 - x_2^2 - x_3^2
\;\;\text{and}\;\;\,
	\Omega_2 := (1+\alpha_3)x_1^2
	+ (1+\alpha_1\alpha_3)x_2^2
	+ (1-\alpha_1)x_3^2
	\]
	are central elements of $S(\alpha_1,\alpha_2,\alpha_3)$.
\end{lemma}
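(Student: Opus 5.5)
The statement is cited from \cite[Corollary 3.9]{SS2}, but I would verify it directly. The goal is that $\Omega_1$ and $\Omega_2$ commute with each generator $x_0,\dots,x_3$ in $S=S(\alpha_1,\alpha_2,\alpha_3)$. Since $S$ is generated in degree $1$, it suffices to show that $[x_m,\Omega]=0$ in $S_3$ for $m=0,1,2,3$ and $\Omega\in\{\Omega_1,\Omega_2\}$. Both elements are linear combinations of squares, so I would first compute the commutators $[x_m,x_n^2]$ for all $m\neq n$ as elements of $S_3$. Then I would check that the relevant linear combinations vanish.

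To compute the $[x_m,x_n^2]$, I would solve the defining relations for the anticommutator and commutator of each pair. For $(i,j,k)\in\mathrm{Cyc}(1,2,3)$, the relations $r_i^{\pm}$ give
\[
x_0x_i=\tfrac12\bigl([x_j,x_k]+\alpha_i\{x_j,x_k\}\bigr),\qquad
x_ix_0=\tfrac12\bigl(-[x_j,x_k]+\alpha_i\{x_j,x_k\}\bigr).
\]
Here $[a,b]=ab-ba$ and $\{a,b\}=ab+ba$.
\begin{enumerate}
\item For $[x_0,x_i^2]$, I would use $[x_0,x_i^2]=[x_0,x_i]x_i+x_i[x_0,x_i]=\alpha_i\{\{x_j,x_k\},x_i\}$, which is a symmetric cubic in $x_i,x_j,x_k$.
\item For $[x_0,x_j^2]$ and $[x_0,x_k^2]$, I would obtain the analogous expressions with $\alpha_j$ and $\alpha_k$.
\item $[x_0,\Omega_1]$ becomes $-(\alpha_1+\alpha_2+\alpha_3)$ times the totally symmetric cubic $\sum\{\{x_j,x_k\},x_i\}$ in $x_1,x_2,x_3$, after accounting for the fact that this cubic is the same for each cyclic order. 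This is not visibly zero, so I must express the symmetric cubic in a normal-form basis of $S_3$ and show that it reduces correctly.
\item In that reduction I expect the relation $\alpha_1+\alpha_2+\alpha_3+\alpha_1\alpha_2\alpha_3=0$ to enter.
\end{enumerate}

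For $x_i$ with $i\ge1$, I would handle the three terms of $\Omega_1$ as follows.
\begin{enumerate}
\item $[x_i,x_0^2]$ uses $x_ix_0$ and $x_0x_i$ rewritten via the displayed formulas.
\item $[x_i,x_j^2]$ and $[x_i,x_k^2]$ need relations among $x_1,x_2,x_3$ alone. Such relations come only from combining $r^{\pm}$, so they only make sense modulo the degree-$3$ part of the ideal.
\end{enumerate}

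A cleaner route for this step is the following. Work in $S_3=V^{\otimes3}/(R\otimes V+V\otimes R)$ and write $x_m\Omega-\Omega x_m$ as an element of $V^{\otimes 3}$. Then exhibit explicitly the coefficients $c,d$ with $x_m\Omega-\Omega x_m=\sum c\,(r\otimes x)+\sum d\,(x\otimes r)$. By the symmetry of the relations, I would guess the coefficients for $\Omega_1$ to be simple, e.g. $\sum_i (r_i^{+}x_i - x_i r_i^{+})$ up to sign, or a combination with $r_i^{-}$. For $\Omega_2$ the coefficients involve $\alpha$'s. Cyclic symmetry of $\mathrm{Cyc}(1,2,3)$ reduces the generators $x_1,x_2,x_3$ to one case, up to permuting the $\alpha$'s. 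That leaves two generators ($x_0$ and $x_1$) for each of the two elements, four identities in total.

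The main obstacle is $\Omega_2$: its coefficients $1+\alpha_3$, $1+\alpha_1\alpha_3$, $1-\alpha_1$ are not cyclically symmetric, so the cyclic reduction fails. I would first replace $\Omega_2$ by a more symmetric central combination. Using the constraint, $\alpha_2=-(\alpha_1+\alpha_3)/(1+\alpha_1\alpha_3)$, so I would look for $\beta_i$ with $\sum_{i\ge1}\beta_i x_i^2$ central. The condition at $x_0$ is $\sum_i \beta_i\alpha_i\,\{\{x_j,x_k\},x_i\}$-type cancellation. The condition at $x_i$ is a linear system in the $\beta_i$, and I would show it has a solution proportional to the given coefficients using the constraint. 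If the direct ideal-membership computation becomes unwieldy, I would instead cite \cite[Corollary 3.9]{SS2}, as the paper does.
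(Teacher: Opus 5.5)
The paper does not prove this lemma; it only cites \cite[Corollary 3.9]{SS2}, which is also your fallback. As a self-contained verification, however, your proposal has a genuine gap, and the one computation you actually carry out goes wrong.

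Your identity $[x_0,x_i^2]=\alpha_iT_i$, with $T_i:=\{\{x_j,x_k\},x_i\}$, is correct. The next step is not.
\begin{itemize}
\item $T_i=x_jx_kx_i+x_kx_jx_i+x_ix_jx_k+x_ix_kx_j$ is symmetric only in $j,k$.
\item $T_1,T_2,T_3$ are three different tensors, not "the same for each cyclic order". So $[x_0,\Omega_1]=-(\alpha_1T_1+\alpha_2T_2+\alpha_3T_3)$ does not factor as $-(\alpha_1+\alpha_2+\alpha_3)$ times one cubic.
\item If the $T_i$ did coincide in $S_3$, say with common value $T$, then $[x_0,\Omega_1]=\alpha_1\alpha_2\alpha_3T$, since $\alpha_1+\alpha_2+\alpha_3=-\alpha_1\alpha_2\alpha_3\neq0$. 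This vanishes only if $T=0$.
\end{itemize}
So the normal-form reduction planned in your steps (3)--(4) targets the wrong expression. Separately, your formula for $x_ix_0$ has the wrong sign: $r_i^{\pm}$ give $x_ix_0=\tfrac12([x_j,x_k]-\alpha_i\{x_j,x_k\})$.

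The missing idea is to compute each $[x_m,x_n^2]$ twice, once via $r^+$ and once via $r^-$, and compare using free-algebra identities. For $\Omega_1$ the $r^+$ relations alone suffice, and the constraint plays no role.
\begin{itemize}
\item In $S$, $[x_0,x_i^2]=[x_0x_i+x_ix_0,x_i]=[[x_j,x_k],x_i]$. Hence $[x_0,x_1^2+x_2^2+x_3^2]$ is the cyclic Jacobi sum, which vanishes in the free algebra. This is exactly the identity $x_0\Omega_1-\Omega_1x_0=-\sum_i(r_i^+x_i-x_ir_i^+)$ that you guessed but did not check.
\item Similarly, $[x_1,x_0^2]=[\{x_1,x_0\},x_0]$, $[x_1,x_m^2]=\{[x_1,x_m],x_m\}$ and the $r^+$ relations give $[x_1,\Omega_1]=[[x_2,x_3],x_0]-\{\{x_0,x_3\},x_2\}+\{\{x_0,x_2\},x_3\}$. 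This is zero already in the free algebra.
\item Cyclic relabeling fixes $\Omega_1$ and permutes the $r_i^+$, so it handles $x_2$ and $x_3$.
\end{itemize}

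For $\Omega_2=\sum_{i\ge1}\beta_ix_i^2$ nothing is actually verified.
\begin{itemize}
\item The condition at $x_0$ only becomes a condition on the $\beta_i$ once you know the linear relations among $T_1,T_2,T_3$ in $S_3$. That is exactly what step (3) got wrong.
\item Those relations come from comparing the two computations of $[x_0,x_i^2]$ with the free identity $[[x_j,x_k],x_i]=T_j-T_k$. This gives $\alpha_iT_i=T_j-T_k$ in $S$, and these relations do kill $\sum_i\beta_i\alpha_iT_i$.
\item The identity $[x_1,\Omega_2]=0$, which is where the constraint $\alpha_1+\alpha_2+\alpha_3+\alpha_1\alpha_2\alpha_3=0$ enters, is not addressed at all.
\end{itemize}

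The obstacle you single out is not real. Using the constraint, $(1-\alpha_1)(1+\alpha_2\alpha_3)=(1+\alpha_2)(1+\alpha_3)$ and $(1+\alpha_1\alpha_3)(1+\alpha_2\alpha_3)=1-\alpha_3^2$. These say that the relabeling $x_1\mapsto x_2\mapsto x_3\mapsto x_1$, which is an isomorphism $S(\alpha_1,\alpha_2,\alpha_3)\cong S(\alpha_3,\alpha_1,\alpha_2)$, sends $\Omega_2$ to a scalar multiple of the corresponding $\Omega_2$. So the cyclic reduction to $x_0$ and $x_1$ does apply.

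As written, the only complete argument in your proposal is the citation you fall back on.
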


\begin{remark} \label{lem.skce}
Let $S=S(\alpha_1,\alpha_2,\alpha_3)$ be a $4$-dimensional Sklyanin algebra satisfying the above non-degenerate parameter conditions, namely $\alpha_i\neq 0,\pm1$ for all $i$.
Then $S$ is of the form $S=\cA(E,\cL,\t)$ with $|\t|\neq 2$; this follows from the description of the associated geometric data in \cite[Section 2]{SS2}.
In this case, we have
\[
Z(S)_2=k\Omega_1+k\Omega_2.
\]
Indeed, if $|\t|=\infty$, then $Z(S)=k[\Omega_1,\Omega_2]$ by \cite[Proposition 6.12]{LS}, so $Z(S)_2=k\Omega_1+k\Omega_2$.
If $2\neq |\t|<\infty$, then $\dim_k Z(S)_2=2$ by \cite[Theorem 4.6]{SmT}. Since $\Omega_1$ and $\Omega_2$ are linearly independent, it follows that $Z(S)_2=k\Omega_1+k\Omega_2$.
\end{remark}

\begin{lemma}[{\cite[Proposition 2.1]{CS}}]\label{lem_auto4skl}
	The maps $\delta_i$ for $i=1, 2, 3$
	defined by
	\[\delta_i(x_0) = \sqrt{\alpha_j\alpha_k} x_i,\;\;\,
	\delta_i(x_i) = -\sqrt{-1} x_0,\;\;\,
	\delta_i(x_j) = - \sqrt{-\alpha_j} x_k,\;\;\,
	\delta_i(x_k) = - \sqrt{\alpha_k} x_j,\]
	where $(i,j,k)\in \mathrm{Cyc}(1,2,3)$
	induce graded $k$-algebra automorphisms of $S(\alpha_1,\alpha_2,\alpha_3)$.
\end{lemma}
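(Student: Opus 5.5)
The plan is to verify directly that each $\delta_i$ is a linear automorphism of $V=kx_0\oplus kx_1\oplus kx_2\oplus kx_3$ that maps the relation space
$$R=\operatorname{span}\{r^\pm_1,r^\pm_2,r^\pm_3\}\subset V\otimes V$$
into itself. Since $\dim R=6$ and $\delta_i\otimes\delta_i$ is injective on $V\otimes V$, this inclusion is an equality. Consequently $\delta_i$ descends to a graded automorphism of $T(V)/(R)=S(\alpha_1,\alpha_2,\alpha_3)$, and its inverse descends as well.

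Invertibility on $V$ is immediate. The matrix of $\delta_i$ is a monomial matrix whose nonzero entries are $\sqrt{\alpha_j\alpha_k}$, $-\sqrt{-1}$, $-\sqrt{-\alpha_j}$ and $-\sqrt{\alpha_k}$, and all of these are nonzero because $\alpha_l\neq 0$. I will fix the branch convention $\sqrt{-\alpha_j}=\sqrt{-1}\sqrt{\alpha_j}$ and $\sqrt{\alpha_j\alpha_k}=\sqrt{\alpha_j}\sqrt{\alpha_k}$. If this convention is not the one intended, the signs must be adjusted consistently. The relations are invariant under cyclic permutation of $(1,2,3)$ together with the parameters, so it suffices to treat $i=1$, where $\delta_1$ is given by
$$x_0\mapsto\sqrt{\alpha_2\alpha_3}\,x_1,\quad x_1\mapsto-\sqrt{-1}\,x_0,\quad x_2\mapsto-\sqrt{-\alpha_2}\,x_3,\quad x_3\mapsto-\sqrt{\alpha_3}\,x_2 .$$

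First I will handle $r^\pm_1$, which should be eigenvectors. A direct substitution gives
$$\delta_1(r^-_1)=\sqrt{-1}\sqrt{\alpha_2\alpha_3}\,r^-_1,\qquad \delta_1(r^+_1)=-\sqrt{-1}\sqrt{\alpha_2\alpha_3}\,r^+_1.$$
Here the anticommutator $x_2x_3+x_3x_2$ and the commutator $x_2x_3-x_3x_2$ transform with exactly the right scalar, because $\sqrt{-\alpha_2}\sqrt{\alpha_3}=\sqrt{-1}\sqrt{\alpha_2\alpha_3}$. Next, $\delta_1$ sends $x_0x_2\pm x_2x_0$ and $x_3x_1\pm x_1x_3$ into $\operatorname{span}\{x_1x_3\pm x_3x_1,\ x_0x_2\pm x_2x_0\}$. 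The same holds with the roles of $x_2$ and $x_3$ exchanged. Hence $\delta_1(r^\pm_2)$ and $\delta_1(r^\pm_3)$ lie in the $8$-dimensional space spanned by the symmetric and antisymmetric parts of $x_0x_2$, $x_0x_3$, $x_1x_2$ and $x_1x_3$. The remaining task is to show that each of these images is a linear combination of $r^\pm_2$ and $r^\pm_3$.

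This last step is the main obstacle. For $\delta_1(r^-_2)$ and $\delta_1(r^+_2)$ I will write an ansatz $c_1r^-_3+c_2r^+_3$ and solve for $c_1,c_2$ by matching the coefficients of $x_0x_3$ and $x_3x_0$. I will then check the two remaining coefficient equations, namely those of $x_1x_2$ and $x_2x_1$. These should reduce to an identity that is equivalent to $\alpha_1+\alpha_2+\alpha_3+\alpha_1\alpha_2\alpha_3=0$, perhaps in the rearranged form $\alpha_1=-(\alpha_2+\alpha_3)/(1+\alpha_2\alpha_3)$. Alternatively they may reduce to a variant involving the factors $1\pm\alpha_l$, which are nonzero by the hypothesis $\alpha_l\neq\pm1$. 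The case of $r^\pm_3$ mapping into $\operatorname{span}\{r^\pm_2\}$ is symmetric. If the direct matching fails because of a sign coming from the branch choice, I will first try flipping the sign of $\sqrt{-1}$ in $\delta_1(x_1)$ to locate the discrepancy. I will then recheck whether it is the parameter identity or the branch convention that needs care.
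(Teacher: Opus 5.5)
The paper gives no proof of its own for this lemma; it only quotes Proposition 2.1 of [CS]. A direct check that $\delta_i(R)=R$ is therefore the natural route, and most of your framework is sound. The monomial matrix is invertible because $\alpha_l\neq 0$, and the reduction to $i=1$ by cyclic relabeling is legitimate. Your computation of $\delta_1(r_1^\pm)$ is correct under the compatibility $\sqrt{-\alpha_2}\sqrt{\alpha_3}=\sqrt{-1}\sqrt{\alpha_2\alpha_3}$.

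The gap is in the step you call the main obstacle: the ansatz $\delta_1(r_2^\pm)=c_1r_3^-+c_2r_3^+$ cannot work. Up to nonzero scalars, $\delta_1$ interchanges $x_0\leftrightarrow x_1$ and $x_2\leftrightarrow x_3$. Hence it maps $\operatorname{span}\{x_0x_2,x_2x_0,x_1x_3,x_3x_1\}$ to itself, as you observe yourself. This span is exactly the support of $r_2^\pm$, and it is disjoint from the support $\{x_0x_3,x_3x_0,x_1x_2,x_2x_1\}$ of $r_3^\pm$. Matching the coefficients of $x_0x_3$ and $x_3x_0$ therefore forces $c_1=c_2=0$, whereas $\delta_1(r_2^-)\neq 0$. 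For the same reason, $\delta_1(r_3^\pm)$ cannot lie in $kr_2^-+kr_2^+$. Your fallback of flipping the sign of $\sqrt{-1}$ does not help, because the failure is in the choice of target, not in a sign. Merging the two $4$-dimensional monomial blocks into one $8$-dimensional space threw away exactly the information that makes the check work.

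The correct statement is that each plane $kr_l^-+kr_l^+$ is $\delta_1$-stable. The $r_1^\pm$ are eigenvectors, and for $l=2,3$ the elements $r_l^-$ and $r_l^+$ are interchanged up to scalars. With your branch convention one gets
\begin{align*}
\delta_1(r_2^-)&=-\sqrt{-1}\,\alpha_2\sqrt{\alpha_3}\,r_2^+, & \delta_1(r_2^+)&=\sqrt{-1}\sqrt{\alpha_3}\,r_2^-,\\
\delta_1(r_3^-)&=\alpha_3\sqrt{\alpha_2}\,r_3^+, & \delta_1(r_3^+)&=\sqrt{\alpha_2}\,r_3^-.
\end{align*}
Each membership $\delta_1(r_l^\pm)\in kr_l^\mp$ holds exactly under the same compatibility $\sqrt{-\alpha_2}\sqrt{\alpha_3}=\sqrt{-1}\sqrt{\alpha_2\alpha_3}$ that you already needed for $r_1^\pm$.

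So your expectation that the final coefficient check reduces to $\alpha_1+\alpha_2+\alpha_3+\alpha_1\alpha_2\alpha_3=0$, or to something involving $1\pm\alpha_l$, is also mistaken. Neither the defining relation of the parameters nor $\alpha_l\neq\pm1$ is used anywhere; only $\alpha_l\neq 0$ is. Once you replace the ansatz by $\delta_1(r_l^\pm)\in kr_l^-+kr_l^+$ for $l=2,3$, your argument closes.
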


The following lemma is straightforward.

\begin{lemma}\label{lem_del}
	Let the notation be as above.
	\begin{enumerate}
		\item The images of $\Omega_1$ under the maps $\delta_i$ are given by
		\begin{align*}
			\delta_1(\Omega_1) &= x_0^2 + \alpha_2\alpha_3 x_1^2 - \alpha_3 x_2^2 + \alpha_2 x_3^2, \\
			\delta_2(\Omega_1) &= x_0^2 + \alpha_3\alpha_1 x_2^2 - \alpha_1 x_3^2 + \alpha_3 x_1^2, \\
			\delta_3(\Omega_1) &= x_0^2 + \alpha_1\alpha_2 x_3^2 - \alpha_2 x_1^2 + \alpha_1 x_2^2.
		\end{align*}

		\item Let $0\neq f = \b_0x_0^2+\b_1x_1^2+\b_2 x_2^2+\b_3x_3^2
		\in k\Omega_1 + k\Omega_2$.
		Then $\b_0\b_1\b_2\b_3=0$
		if and only if $f$ is
\begin{align*}
\Omega _2 & = (1+\a_3)x_1^2+(1+\a_1\a_3)x_2^2+(1-\a_1)x_3^2, \\
\delta_1(\Omega_2) & = -(1+\a_3)x_0^2-(\a_1\a_3-\a_3)x_2^2+(\a_1+\a_3)x_3^2, \\
\delta_2(\Omega_2) & = -(1+\a_1\a_3)x_0^2-(\a_3-\a_1\a_3)x_1^2+(\a_1+\a_1\a_3)x_3^2,\\
\delta_3(\Omega_2) & = -(1-\a_1)x_0^2-(\a_1+\a_3) x_1^2-(\a_1+\a_1\a_3)x_2^2
\end{align*}
up to scalar.  It follows that at most one of $\b_i$ is zero.
	\end{enumerate}
\end{lemma}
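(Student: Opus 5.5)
The statement has two parts, and I will handle them in order: first the formulas for $\delta_i(\Omega_1)$, then the vanishing criterion for the coefficients.

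\textbf{Part (1).} This is a direct substitution. Take $(i,j,k)=(1,2,3)$, so $\delta_1(x_0)=\sqrt{\a_2\a_3}\,x_1$, $\delta_1(x_1)=-\sqrt{-1}\,x_0$, $\delta_1(x_2)=-\sqrt{-\a_2}\,x_3$ and $\delta_1(x_3)=-\sqrt{\a_3}\,x_2$. Since $\delta_1$ is an algebra map, squares go to squares of the images:
\[
\delta_1(\Omega_1)=\a_2\a_3x_1^2+x_0^2+\a_2x_3^2-\a_3x_2^2 .
\]
This matches the claimed formula. The cases $\delta_2,\delta_3$ follow by cyclic permutation of indices.

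\textbf{Part (2).} Write a general $f=\l\Omega_1+\mu\Omega_2$ with $(\l,\mu)\neq(0,0)$. Its coefficients are
\[
\b_0=\l,\quad \b_1=-\l+\mu(1+\a_3),\quad \b_2=-\l+\mu(1+\a_1\a_3),\quad \b_3=-\l+\mu(1-\a_1).
\]
Each condition $\b_i=0$ is a single linear condition on $(\l,\mu)$, so it determines a unique point of $\PP^1$:
\begin{itemize}
\item $\b_0=0$ gives $f=\Omega_2$;
\item $\b_1=0$ gives $\l=1+\a_3$, $\mu=1$;
\item $\b_2=0$ gives $\l=1+\a_1\a_3$, $\mu=1$;
\item $\b_3=0$ gives $\l=1-\a_1$, $\mu=1$.
\end{itemize}

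Next I identify these points with $\delta_i(\Omega_2)$. I compute $\delta_i(\Omega_2)$ by the same substitution as in part (1). For example,
\[
\delta_1(\Omega_2)=-(1+\a_3)x_0^2-(1+\a_1\a_3)\a_2x_3^2-(1-\a_1)\a_3x_2^2 .
\]
To put this in the stated form I use the relation $\a_1+\a_2+\a_3+\a_1\a_2\a_3=0$ to eliminate $\a_2$: it gives $\a_2=-(\a_1+\a_3)/(1+\a_1\a_3)$. This is the algebraic step where care is needed. In particular I must justify $1+\a_1\a_3\neq0$; if $1+\a_1\a_3=0$, the relation forces $\a_1+\a_3=0$, hence $\a_1^2=1$, contradicting $\a_i\neq\pm1$.

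I also need each $\delta_i(\Omega_2)$ to lie in $k\Omega_1+k\Omega_2$. This holds because automorphisms preserve $Z(S)_2$, which equals $k\Omega_1+k\Omega_2$ by Remark \ref{lem.skce}. Alternatively, one can simply check directly that $-(1+\a_3)\Omega_1+\Omega_2$ has the listed coefficients. In either case, $\delta_1(\Omega_2)$ has no $x_1^2$ term, so it is the unique point with $\b_1=0$. The same argument, cyclically permuted, handles $\delta_2$ and $\delta_3$.

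\textbf{Final claim.} The four points above are distinct in $\PP^1$. Indeed, the $\l$-values $1+\a_3$, $1+\a_1\a_3$, $1-\a_1$ are pairwise distinct:
\begin{itemize}
\item $\a_3=\a_1\a_3$ would force $\a_1=1$;
\item $\a_3=-\a_1$ would, via the relation, force $\a_2(1-\a_1^2)=0$, impossible since $\a_2\neq0$ and $\a_1\neq\pm1$;
\item $\a_1\a_3=-\a_1$ would force $\a_3=-1$.
\end{itemize}
All three values are also nonzero, since $\a_3\neq-1$, $1+\a_1\a_3\neq0$ as shown above, and $\a_1\neq1$; so none coincides with the point $\Omega_2$ (where $\l=0$). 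Hence no $f$ has two vanishing coefficients, which proves that at most one of the $\b_i$ is zero.
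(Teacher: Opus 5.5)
Your plan is the paper's: direct substitution, then the nonvanishing checks using $\alpha_1+\alpha_3=-\alpha_2(1+\alpha_1\alpha_3)$ and $\alpha_i\neq 0,\pm1$. Part (1) is fine. Your distinctness check for the four points of $\PP^1$ is also correct, and it is the paper's nonvanishing check in another form. The differences of your $\lambda$-values, namely $\alpha_3(\alpha_1-1)$, $-(\alpha_1+\alpha_3)$ and $-\alpha_1(1+\alpha_3)$, together with $1+\alpha_3$, $1+\alpha_1\alpha_3$ and $1-\alpha_1$, are up to sign exactly the surviving coefficients of $\Omega_2$ and the $\delta_i(\Omega_2)$.

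The explicit identification in part (2), however, is wrong as written, and this is precisely what the statement asserts.

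First, $\delta_1(x_3)^2=\alpha_3x_2^2$. You used exactly this in part (1) to get $-\alpha_3x_2^2$ from $-x_3^2$. So the correct substitution is
\[
\delta_1(\Omega_2)=-(1+\alpha_3)x_0^2+(1-\alpha_1)\alpha_3x_2^2-(1+\alpha_1\alpha_3)\alpha_2x_3^2 .
\]
With your minus sign, the $x_2^2$-coefficient becomes $\alpha_1\alpha_3-\alpha_3$. That is not the stated $-(\alpha_1\alpha_3-\alpha_3)$, and it is not even compatible with lying in $k\Omega_1+k\Omega_2$ (it would force $\alpha_3(1-\alpha_1)=0$).

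Second, the correct combination is $\delta_1(\Omega_2)=-(1+\alpha_3)\Omega_1-\Omega_2$, which is your own point $(1+\alpha_3:1)$. The element $-(1+\alpha_3)\Omega_1+\Omega_2$ that you propose to check has $x_1^2$-coefficient $2(1+\alpha_3)\neq 0$.

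Third, $\delta_2(\Omega_2)$ and $\delta_3(\Omega_2)$ are not obtained from $\delta_1(\Omega_2)$ by cyclic permutation. The coefficients $1+\alpha_3$, $1+\alpha_1\alpha_3$, $1-\alpha_1$ of $\Omega_2$ are not cyclically symmetric, so each case needs its own substitution. For $\delta_3$ you need the relation again, to rewrite $(1+\alpha_1\alpha_3)\alpha_2x_1^2$ as $-(\alpha_1+\alpha_3)x_1^2$.

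Your structural argument is unaffected by these slips: graded automorphisms preserve $Z(S)_2=k\Omega_1+k\Omega_2$, and $\delta_i(\Omega_2)$ has no $x_i^2$-term. So the equivalence, and the conclusion that at most one $\beta_i$ vanishes, both stand once the computations are corrected.
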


\begin{proof}
The assertions follow from direct calculations. Since $\a_1+\a_3=-\a_2(1+\a_1\a_3)$ and $\a_2\neq 0$, $\a_1+\a_3=0$ if and only if $\a_1\a_3=-1$ if and only if $(\a_1, \a_3)=(1, -1)$ or $(-1, 1)$, which never occurs, so the last claim follows.
\end{proof}

In \cite{SV}, the authors assert that $S/(f)$ is a domain for every $f\in k\Omega_1+k\Omega_2$.
For the sake of completeness, we include a proof that $f$ is irreducible.

\begin{proposition}\label{prop_cenirr}
	Let $S=S(\alpha_1,\alpha_2,\alpha_3)$ be a $4$-dimensional Sklyanin algebra. Then every $ 0\neq f
	\in k\Omega_1 + k\Omega_2 $ is an irreducible central element of $S$.
\end{proposition}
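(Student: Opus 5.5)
Centrality is immediate, since $\Omega_1,\Omega_2$ are central. For irreducibility I argue by contradiction, assuming $f=uv$ with $u,v\in S_1$ nonzero ($S$ is a domain generated in degree $1$, so any nontrivial factorization of a degree-$2$ element has this form). Write $u=\sum_i u_ix_i$, $v=\sum_i v_ix_i$ and $f=\b_0x_0^2+\b_1x_1^2+\b_2x_2^2+\b_3x_3^2$. The plan is to expand $uv$ in a PBW-type normal form of $S_2$ and compare coefficients.

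The defining relations give a convenient basis of $S_2$. Indeed, $r_i^\pm$ allow us to rewrite $x_ix_0$ in terms of $x_0x_i$ and $x_jx_k,x_kx_j$, and also to relate $x_jx_k$ and $x_kx_j$. Adding and subtracting $r_i^-$ and $r_i^+$ gives
\[
2x_0x_i=(1+\a_i)x_jx_k-(1-\a_i)x_kx_j,\qquad
2x_ix_0=(1-\a_i)x_jx_k-(1+\a_i)x_kx_j .
\]
Since $\dim S_2=10$ and there are $16$ monomials with $6$ relations, the ten elements $x_0^2,\dots,x_3^2$, $x_1x_2,x_2x_1,x_2x_3,x_3x_2,x_3x_1,x_1x_3$ form a basis of $S_2$. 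Expanding $uv$ in this basis, the coefficient of $x_i^2$ is $u_iv_i$. Each term $x_0x_i$ or $x_ix_0$ contributes to $x_jx_k$ and $x_kx_j$, which gives six further equations of the form
\[
u_jv_k+\tfrac{1+\a_i}{2}u_0v_i+\tfrac{1-\a_i}{2}u_iv_0=0,\qquad
u_kv_j-\tfrac{1-\a_i}{2}u_0v_i-\tfrac{1+\a_i}{2}u_iv_0=0,
\]
up to the exact signs, which I would double-check in the computation. We then need $u_iv_i=\b_i$ for $i=0,\dots,3$ together with these six equations.

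For the case analysis I use Lemma \ref{lem_del}(2), which says at most one $\b_i$ vanishes. Hence at least three of the products $u_iv_i$ are nonzero, so at least three indices have $u_i,v_i$ both nonzero. I split into the case where all four $\b_i\neq0$ and the case where exactly one $\b_i=0$. In the latter case, by Lemma \ref{lem_del}(2), $f$ is $\Omega_2$ or some $\d_i(\Omega_2)$ up to scalar. Since $\d_i$ is an automorphism of $S$, a factorization of $\d_i(\Omega_2)$ pulls back to one of $\Omega_2$, so it suffices to treat $f=\Omega_2$. There $\b_0=0$, so $u_0=0$ or $v_0=0$, and the six equations become bilinear equations in $u_1,u_2,u_3,v_1,v_2,v_3$ with all $u_i,v_i$ nonzero for $i\ge1$.

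Take, say, $u_0=0$. The equations then read $u_jv_k=-\tfrac{1-\a_i}{2}u_iv_0$ and $u_kv_j=\tfrac{1+\a_i}{2}u_iv_0$. If $v_0=0$ as well, we get $u_jv_k=0$, contradicting the nonvanishing. Otherwise, multiplying the equations cyclically and using $u_iv_i=\b_i$ produces a constraint among the $\a_i$ and $\b_i$. I expect this to contradict $\a_i\neq0,\pm1$ together with $\a_1+\a_2+\a_3+\a_1\a_2\a_3=0$.

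In the generic case, where all $u_i,v_i$ are nonzero, set $t_i=v_i/u_i$, so $v_i=t_iu_i$ and $t_i=\b_i/u_i^2$. The six equations become linear in the ratios. Taking ratios of the two equations for each $i$ eliminates the $u$'s and expresses $t_j/t_k$ in terms of $t_i/t_0$ and $\a_i$. Multiplying the three cyclic relations should force an identity in the $\a_i$ that violates non-degeneracy, unless $t_0=t_1=t_2=t_3$. But equal $t_i$ means $v=tu$, so $f=tu^2$. Reading off the cross terms, all $u_iu_j$ with $i\neq j$ must vanish, which contradicts having three nonzero $u_i$.

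The main obstacle is this final elimination in the generic case. It is a genuinely nonlinear system, and using the parameter constraint to exclude every solution needs care. If the direct elimination becomes messy, I would first exploit the symmetry provided by the $\d_i$ together with the sign symmetries $x_i\mapsto -x_i$ to normalize $u$, and then solve.
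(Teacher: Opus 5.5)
Your degenerate case is fine. With $u_0=0\neq v_0$, the cyclic product gives $\prod(1+\alpha_i)=-\prod(1-\alpha_i)$, i.e.\ $\alpha_1\alpha_2+\alpha_2\alpha_3+\alpha_3\alpha_1=-1$. Under the Sklyanin relation this is equivalent to $(1-\alpha_1^2)(1-\alpha_2^2)=0$, which is excluded.

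The generic case, however, has a genuine gap, and the step you flag as the main obstacle is not merely messy: as formulated it is false. There you use only two facts: the six cross-term coefficients of $uv$ vanish, and all $u_i,v_i\neq 0$. You never use that $f$ is central, nor its actual coefficients beyond ``at most one $\beta_i$ vanishes''. These conditions do admit solutions with $u,v$ not proportional. For instance, take $c^2=\alpha_1$. Then all six of your cross-term equations hold, and
\[
(x_0+\sqrt{-1}\,x_1+\sqrt{-1}\,c\,x_2+c\,x_3)(x_0-\sqrt{-1}\,x_1+\sqrt{-1}\,c\,x_2+c\,x_3)=x_0^2+x_1^2-\alpha_1x_2^2+\alpha_1x_3^2
\]
in $S$. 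Here $t=(1,-1,1,1)$ is not constant, and the right-hand side is a reducible diagonal quadric with all four coefficients nonzero. Note also that multiplying the three cyclic ratio relations gives a single equation in $t_1/t_0,t_2/t_0,t_3/t_0$, not an identity in the $\alpha_i$. This differs from the degenerate case, where each equation is a monomial relation. So any completion must bring in more information about $f$. One option is the exact pencil coefficients, which would mean finding every solution of the cross-term system and checking none lies in $k\Omega_1+k\Omega_2$. The other, much simpler, is centrality.

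The missing idea is the paper's. Since $S$ is a domain and $f$ is central, $u(vu)=fu=uf=u(uv)$, so $vu=uv$. In your basis,
\[
uv-vu=\sum_{(i,j,k)}\Big(\alpha_i(u_0v_i-u_iv_0)(x_jx_k+x_kx_j)+(u_jv_k-u_kv_j)(x_jx_k-x_kx_j)\Big),
\]
so all $2\times2$ minors of $(u,v)$ vanish and $v$ is a multiple of $u$. Then $f=\lambda u^2$. The cross terms of $u^2$ are $u_0u_i(x_jx_k-x_kx_j)+u_ju_k(x_jx_k+x_kx_j)$, which forces $u$ to be a multiple of a single $x_a$. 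This contradicts Lemma \ref{lem_del}(2), with no nonlinear elimination needed.
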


\begin{proof}
	Suppose that $f$ is reducible so that $f=uv$ for some $u, v \in S_1$.  Since $S$ is a domain and $f$ is central, $uvu=fu=uf=u^2v$ implies $vu=uv$. If we write
	$
	u=\sum_{i=0}^{3} a_i x_i,
	v=\sum_{i=0}^{3} b_i x_i
	$, then
	\begin{align*}
		0 &=uv-vu= \sum_{0\leq i<j\leq 3}
		(a_ib_j-a_jb_i)(x_ix_j-x_jx_i) \\
		& =  \sum_{(i,j,k)\in \mathrm{Cyc}(1,2,3)}(\alpha_i(a_0b_i-a_ib_0)(x_jx_k + x_kx_j) + (a_jb_k-a_kb_j)(x_jx_k-x_kx_j)).
	\end{align*}
	Since the six elements $\{x_jx_k\pm x_kx_j \mid 1\leq j< k\leq 3\}$ are linearly independent, $a_0b_i-b_0a_i=0$ and $a_jb_k-a_kb_j=0$ for every $(i,j,k)\in \mathrm{Cyc}(1,2,3)$, so $u=\l v$ for some nonzero $\l \in k$.

	Since
	$f$ does not contain a term $x_ix_j$ for $i\neq j$,
	the equality
	\begin{align*}
		f &= \l (a_0x_0+a_1x_1+a_2x_2+a_3x_3)^2\\
		& =  \sum_{i=0}^{3}\l a_i^2x_i^2+\sum_{(i,j,k)\in \mathrm{Cyc}(1,2,3)}\l (a_0a_i(x_jx_k - x_kx_j) + a_ja_k(x_jx_k+x_kx_j))
	\end{align*} forces $a_ia_j=0$ for all $0\leq i\neq j\leq 3$. Thus, $f=\l x_i^2$ for some $i$, which contradicts the fact that at most one of the coefficients of $x_0^2, x_1^2, x_2^2, x_3^2$ in $f$ vanishes by Lemma \ref{lem_del} (2), so $f$ is irreducible.
\end{proof}

\begin{lemma}\label{lem_singular}
	Let $S=S(\alpha_1,\alpha_2,\alpha_3)$ be a $4$-dimensional Sklyanin algebra and $f = \b_0x_0^2 + \b_1 x_1^2 + \b_2 x_2^2 + \b_3 x_3^2 \in S_2$ with $\b_0\b_1\b_2\b_3\neq 0$. Then there exists $\Phi\in M_2(S_1)$ such that $\Phi^2 = f E_2$ over $S$ if and only if $f$ is
	\begin{align*}
			\Omega_1 & = x_0^2 - x_1^2 - x_2^2 - x_3^2, \\
			\delta_1(\Omega_1) &= x_0^2 + \alpha_2\alpha_3 x_1^2 - \alpha_3 x_2^2 + \alpha_2 x_3^2, \\
			\delta_2(\Omega_1) &= x_0^2 + \alpha_3\alpha_1 x_2^2 - \alpha_1 x_3^2 + \alpha_3 x_1^2, \\
			\delta_3(\Omega_1) &= x_0^2 + \alpha_1\alpha_2 x_3^2 - \alpha_2 x_1^2 + \alpha_1 x_2^2
		\end{align*}
	up to scalar.
	Moreover, if $f$ is in the above list,  then the matrix $\Phi$ such that $\Phi^2=fE_2$ is uniquely determined by $f$ up to conjugation and multiplication by $-1$.
\end{lemma}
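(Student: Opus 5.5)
Our plan is a direct computation in $S_2$, using the explicit basis of $S_2$ given by the defining relations. By the relations $r_i^\pm$, the ten monomials
\[
x_0^2,\ x_1^2,\ x_2^2,\ x_3^2,\quad x_0x_i \ (i=1,2,3),\quad x_jx_k \ ((i,j,k)\in \mathrm{Cyc}(1,2,3))
\]
form a basis of $S_2$. In this basis every product $x_ax_b$ can be rewritten explicitly. For instance, solving $r_i^-$ and $r_i^+$ for $x_ix_0$ and $x_kx_j$ expresses them in terms of $x_0x_i$ and $x_jx_k$, with coefficients depending on $\alpha_i$.

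Write $\Phi=\sum_{a=0}^3 x_a M_a$ with $M_a\in M_2(k)$. Then
\[
\Phi^2=\sum_a x_a^2M_a^2+\sum_{a<b}\bigl(x_ax_bM_aM_b+x_bx_aM_bM_a\bigr).
\]
We expand $\Phi^2$ in the basis above and compare with $fE_2$.

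The $x_a^2$ coefficients give $M_a^2=\beta_aE_2$ for each $a$. Since $\beta_a\neq 0$, each $M_a$ is invertible.

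For $(i,j,k)\in \mathrm{Cyc}(1,2,3)$, the $x_0x_i$ and $x_jx_k$ coefficients give two linear conditions. Using $x_ix_0=x_0x_i-\alpha_i(x_jx_k+x_kx_j)$ and the $r_i^+$ relation, these become the vanishing of combinations of the form
\[
(M_0M_i+M_iM_0)\pm(\text{factor involving }\alpha_i)(M_0M_i-M_iM_0)
\]
and similar expressions in $M_jM_k$, $M_kM_j$. Concretely, we should obtain that the anticommutator $\{M_0,M_i\}$ and the commutator $[M_j,M_k]$ are linked by explicit linear relations with coefficients in $\alpha_i$.

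Next we normalize. Rescaling $f$ to set $\beta_0=1$ and conjugating $\Phi$ by $\GL_2(k)$, we may take $M_0=\mathrm{diag}(1,-1)$. We cannot have $M_0=\pm E_2$: the resulting equations, combined with $M_i^2$ scalar, would force $f$ to be reducible, or would contradict $\beta_0\beta_1\beta_2\beta_3\neq0$. Every traceless $2\times 2$ matrix $M$ satisfies $M^2=-\det(M)E_2$, while $M=\pm\sqrt{\beta}E_2$ are the scalar solutions. So each $M_a$ is either scalar or traceless.

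We then do a case analysis on which $M_i$ are traceless and which are scalar. In the traceless case, the symmetric pairing $\{M,N\}=\operatorname{tr}(MN)E_2$ and the commutator of Pauli-type matrices reduce everything to $\mathfrak{sl}_2$ algebra. Writing $M_i$ in the basis $\mathrm{diag}(1,-1)$, $e_{12}$, $e_{21}$ turns the conditions into polynomial equations in a few unknowns. Solving them should yield exactly four solution families, one for each choice of which $M_i$ (if any) anticommutes with $M_0$ and which commutes with it. Mirroring the pattern of $\delta_1,\delta_2,\delta_3$, these correspond to $\Omega_1,\delta_1(\Omega_1),\delta_2(\Omega_1),\delta_3(\Omega_1)$. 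We should then read off the $\beta$'s and check them against Lemma~\ref{lem_del}, using $\alpha_1+\alpha_2+\alpha_3+\alpha_1\alpha_2\alpha_3=0$.

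For the converse, we would exhibit $\Phi$ explicitly for $\Omega_1$, e.g.\ $M_0=\mathrm{diag}(1,-1)$ with $M_1,M_2,M_3$ built from Pauli matrices times suitable scalars. We then transport it via the automorphisms $\delta_i$ of Lemma~\ref{lem_auto4skl}: $\delta_i(\Phi)^2=\delta_i(\Omega_1)E_2$.

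For uniqueness, note that in each case the system determines the $M_a$ up to simultaneous conjugation by the stabilizer of $M_0$, which is the diagonal torus, together with the overall sign $\Phi\mapsto-\Phi$. The remaining torus freedom is exactly conjugation, which gives the claimed uniqueness.

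The main obstacle is the case analysis of the polynomial system. The delicate point is ruling out mixed scalar and traceless configurations and identifying the solution $\beta$'s precisely with the four listed quadrics. Here the non-degeneracy $\alpha_i\neq0,\pm1$ must be used carefully to kill the spurious branches.
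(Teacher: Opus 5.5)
There is a genuine gap in your normalization step. You assert that $M_0=\pm E_2$ is impossible and then fix $M_0=\mathrm{diag}(1,-1)$. This is backwards: $M_0=\varepsilon E_2$ is exactly the configuration that produces $\Omega_1$.

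With $M_0=\varepsilon E_2$, the cross-term conditions reduce to $M_jM_k+M_kM_j=0$ and $M_jM_k=-\varepsilon M_i$ for $(i,j,k)\in\mathrm{Cyc}(1,2,3)$. Hence $\beta_iM_j=M_i^2M_j=-\varepsilon M_iM_k=\varepsilon M_kM_i=-M_j$, so $\beta_1=\beta_2=\beta_3=-1$ and $f=\Omega_1$. Indeed, the matrix
\[
\Phi=\begin{pmatrix} x_0+\sqrt{-1}x_1 & \sqrt{-1}x_2+x_3\\ \sqrt{-1}x_2-x_3 & x_0-\sqrt{-1}x_1\end{pmatrix},
\]
which has $M_0=E_2$, satisfies $\Phi^2=\Omega_1E_2$.

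Conversely, the configuration you keep, with all four $M_a$ traceless, is empty. Take $M_0=\mathrm{diag}(1,-1)$ and $M_i=\begin{pmatrix} a_i&b_i\\ c_i&-a_i\end{pmatrix}$. Then $M_jM_k+M_kM_j$ is scalar, $M_jM_k-M_kM_j$ is traceless, $M_0M_i-M_iM_0$ is off-diagonal, and $M_0M_i+M_iM_0=2a_iE_2$. So the relations
$\alpha_i(M_0M_i-M_iM_0)+(M_jM_k+M_kM_j)=0$ and $(M_0M_i+M_iM_0)+(M_jM_k-M_kM_j)=0$
force $\alpha_ib_i=\alpha_ic_i=a_i=0$, i.e.\ $M_i=0$, contradicting $\beta_i\neq0$.

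Hence, with $M_0$ traceless, some $M_i$ ($i\geq1$) must be scalar, and these branches yield only $\delta_1(\Omega_1),\delta_2(\Omega_1),\delta_3(\Omega_1)$. This has three consequences:
\begin{enumerate}
\item Your ``four families'' with $M_0=\mathrm{diag}(1,-1)$ do not exist; there are only three.
\item $\Omega_1$ is missed entirely.
\item The converse construction you propose for $\Omega_1$ with $M_0=\mathrm{diag}(1,-1)$ cannot exist, since $\Omega_1$ is not a scalar multiple of any $\delta_i(\Omega_1)$ when $\alpha_i\neq0,\pm1$.
\end{enumerate}

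Your uniqueness argument inherits the same defect. For $\Omega_1$ the stabilizer of $M_0=\varepsilon E_2$ is all of $\GL_2(k)$, not a diagonal torus, so you need a genuine rigidity statement. The paper gets it by observing that $u_i\mapsto M_i$ defines a $2$-dimensional representation of the Clifford algebra $Cl_3(k)$ (with $u_i^2=-1$ and $u_iu_j=-u_ju_i$ for $i\neq j$). This algebra has exactly two irreducible representations, related by the sign $\varepsilon=\pm1$.

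The correct organization is to split according to which $M_a$ is scalar; at least one is, by the computation above.
\begin{itemize}
\item If $M_0$ is scalar, you get $\Omega_1$ as described.
\item If $M_i$ is scalar for some $i\geq1$, you need not solve anything by hand. $\delta_i^{-1}(\Phi)$ has scalar $x_0$-coefficient and squares to $\delta_i^{-1}(f)E_2$, so $f=\delta_i(\Omega_1)$ up to scalar, and uniqueness transports along $\delta_i$.
\end{itemize}
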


\begin{proof}
	Without loss of generality, we may assume that $\b_0=1$.  Let $\Phi=\sum_{i=0}^{3}M_ix_i$, where $M_i\in M_2(k)$. If $\Phi^2=fE_2$, then $M_0^2=E_2$, $M_i^2=\b _iE_2, i=1,2,3$ and $\sum_{\substack{i,j=0\\ i\neq j}}^3 M_i M_j x_i x_j = 0$. Since
	$$M_iM_jx_ix_j+M_jM_ix_jx_i = \frac{1}{2}(M_iM_j+M_jM_i)(x_ix_j+x_jx_i) +  \frac{1}{2}(M_iM_j-M_jM_i)(x_ix_j-x_jx_i),$$
	we have
	\begin{align*}
		&\sum_{\substack{i,j=0\\ i\neq j}}^3 2M_i M_j x_i x_j \\
		=&\sum_{(i,j,k)\in \mathrm{Cyc}(1,2,3)}\{\alpha_i(M_0M_i-M_iM_0) + (M_jM_k+M_kM_j) \}(x_jx_k + x_kx_j)\\
		+& \sum_{(i,j,k)\in \mathrm{Cyc}(1,2,3)}\{(M_0M_i+M_iM_0) + (M_jM_k-M_kM_j)\}(x_jx_k - x_kx_j).
	\end{align*}
	It follows that
	\begin{align}
		&\alpha_i(M_0M_i-M_iM_0) + (M_jM_k+M_kM_j) = 0 \label{r_1},\\
		&(M_0M_i+M_iM_0) + (M_jM_k-M_kM_j) = 0\label{r_2}
	\end{align}
	for $(i,j,k)\in \mathrm{Cyc}(1,2,3)$.

(Case 1)
	First suppose that none of $M_0,M_1,M_2,M_3$ is a scalar matrix. If $\Phi^2=fE_2$ for some $\Phi\in M_2(S_1)$, then $M_i^2=\b_iE_2$, so we may assume that
	\[
	M_0=
	\begin{pmatrix}
		1&0\\
		0&-1
	\end{pmatrix},
	\,
	M_i=
	\begin{pmatrix}
		a_i&b_i\\
		c_i&-a_i
	\end{pmatrix}
	\quad (i=1,2,3)
	\]
	 by simultaneous conjugation.  By (\ref{r_1}) and (\ref{r_2}), we obtain
	 \begin{align*}
& \alpha_1 b_1=\alpha_1 c_1=\alpha_2 b_2=\alpha_2 c_2=\alpha_3 b_3=\alpha_3 c_3=0, \\
	 & a_1=a_2=a_3=0,
	 \end{align*}
	 so $M_1=M_2=M_3=0$.
	 Since $\b_0\b_1\b_2\b_3\neq 0$,
	 this case cannot occur.

	(Case 2) Next suppose that $M_0$ is a scalar matrix so that $M_0=\e E_2$ where $\e=\pm 1$.
	If $\Phi^2=fE_2$ for some $\Phi\in M_2(S_1)$, then (\ref{r_1}) and (\ref{r_2}) reduce to
	\begin{align}
	& M_jM_k+M_kM_j=0, \\
	& M_jM_k=-\e M_i
	\end{align}
	for every
	$(i,j,k)\in \mathrm{Cyc}(1,2,3)$.
 Since
 $$\beta _iM_j=M_i^2M_j=-\e M_iM_k=\e M_kM_i=-\e^2M_j=-M_j,$$
 $\beta_i=-1$ for $i=1, 2, 3$, so
 $f=\Omega _1$.

	We now show that there exists (unique) $\Phi\in M_2(S_1)$ such that $\Phi^2=\Omega_1E_2$.
	Since
	\begin{align}
	& M_jM_k+M_kM_j=0, \; (1\leq j\neq k\leq 3),  \\
	& M_i^2=-E_2, \; (1\leq i\leq 3),
	\end{align}
	the assignment $\rho(u_i) = M_i$ for $1 \leq i \leq 3$ induces a $2$-dimensional (irreducible) representation $\rho: Cl_3(k) \longrightarrow M_2(k)$ of the Clifford algebra
	\[
	Cl_3(k) := \frac{k\langle u_1, u_2, u_3 \rangle}{(u_i u_j + u_j u_i + 2\delta_{ij})_{1 \le i,j \le 3}},
	\]
	  where $\delta_{ij}$ denotes the Kronecker delta.  By \cite[Theorem 5.8, Proposition 5.9]{LaM}, $Cl_3(k)$ has two irreducible representations and the two representations are related by $\e=\pm 1$.
Therefore, $(M_1,M_2,M_3)$ is simultaneously conjugate to
	\[
	\left(\e \begin{pmatrix}\sqrt{-1}&0\\0&-\sqrt{-1}\end{pmatrix},\,
	\e \begin{pmatrix}0&\sqrt{-1}\\\sqrt{-1}&0\end{pmatrix},\,
	\e \begin{pmatrix}0&1\\ -1&0\end{pmatrix}\right).
	\]
	In fact if we choose $(M_1,M_2,M_3)$ as above (and $M_0=\e E_2$), then we can verify that
	\begin{align*} \Phi^2&=\left(\sum_{i=0}^3M_ix_i\right)^2=\e^2\begin{pmatrix}
		x_0+\sqrt{-1}x_1 & \sqrt{-1}x_2+x_3\\
		\sqrt{-1}x_2-x_3 & x_0-\sqrt{-1}x_1
	\end{pmatrix}^2\\
&=(x_0^2-x_1^2-x_2^2-x_3^2)E_2=\Omega _1E_2,
	\end{align*}
	so there exists a matrix $\Phi \in M_2(S_1)$ such that $\Phi^2=\Omega _1E_2$,
	and it is unique up to conjugation and multiplication by $-1$.

	(Case 3) Next suppose that $M_1$ is a scalar matrix.
	Since
	$$\Psi:=\delta_1^{-1}(\Phi)=\dfrac{1}{\sqrt {\a_2\a_3}}M_1x_0+\sqrt{-1}M_0x_1-\dfrac{1}{\sqrt {-\a_2}}M_3x_2-\dfrac{1}{\sqrt{\a_2}}M_2x_3
=:\sum_{i=0}^{3}N_ix_i,$$
	$N_0$ is a scalar matrix.  Since $\Psi^2=(\delta_1^{-1}(\Phi))^2=\delta_1^{-1}(\Phi^2)=\delta_1^{-1}(f)E_2$,
it follows from  the  argument of (Case 2) that
$\delta_1^{-1}(f)$ is $\Omega _1$ up to scalar, so $f$ is $\delta_1(\Omega_1)$ up to scalar, and we can reduce to (Case 2).

(Case 4) We can reduce the case where $M_2$ is a scalar matrix or $M_3$ is a scalar matrix to (Case 2) by a similar argument in (Case 3).
\end{proof}

In the proof above, we have constructed a noncommutative graded matrix factorization of $\Omega_1$ over $S(\alpha_1,\alpha_2, \alpha_3)$.  In the proof below, we construct a noncommutative graded matrix factorization of $\Omega_2$ over $S(\alpha_1,\alpha_2, \alpha_3)$.

\begin{lemma}	\label{lem_3variables} $S(\a_1, \a_2, \a_3)/(\Omega _2)$ is standard smooth.
\end{lemma}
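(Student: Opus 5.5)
Since $\Omega_2\in Z(S)_2$ is irreducible by Proposition \ref{prop_cenirr}, $A=S/(\Omega_2)$ is an irreducible noncommutative quadric with central $f$. By Lemma \ref{lem.isns}(2), it is enough to exhibit a pair $(\Phi,\Psi)\in\LMF^2_S(\Omega_2)$ with $(\Phi,\Psi)\not\cong(\Psi,\Phi)$. The plan is to write such a pair down explicitly and then rule out the isomorphism.

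\emph{Construction.} Since $\Omega_2=(1+\a_3)x_1^2+(1+\a_1\a_3)x_2^2+(1-\a_1)x_3^2$ involves only $x_1,x_2,x_3$, I look for $\Phi,\Psi\in M_2(kx_1+kx_2+kx_3)$. Set $\g_1=\sqrt{1+\a_3}$, $\g_2=\sqrt{1+\a_1\a_3}$ and $\g_3=\sqrt{1-\a_1}$; all are nonzero by the last claim of Lemma \ref{lem_del}(2). Modelled on the commutative factorization of a sum of three squares, I take
\[
\Phi=\begin{pmatrix} \g_1x_1 & \g_2x_2+\sqrt{-1}\g_3x_3 \\ \g_2x_2-\sqrt{-1}\g_3x_3 & -\g_1x_1\end{pmatrix}.
\]
I then compute $\Phi\Psi$ for a candidate $\Psi$ of the same shape, with signs and scalars adjusted. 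The computation uses the relations $r_i^\pm$, which let me rewrite $x_jx_k$ in terms of $x_kx_j$ and of $x_0x_i$, $x_ix_0$. The main obstacle is that, unlike the commutative case, the products $x_jx_k$ for $j\ne k$ in $\{1,2,3\}$ do not cancel for free. The relations only give $x_jx_k-x_kx_j=x_0x_i+x_ix_0$ and $x_jx_k+x_kx_j=\a_i^{-1}(x_0x_i-x_ix_0)$, so $x_jx_k$ and $x_kx_j$ are independent in $S_2$ modulo terms involving $x_0$. Consequently the coefficients of $\Psi$ must be chosen so that the off-diagonal entries of $\Phi\Psi$ vanish identically in $S_2$. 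I would first try an ansatz $\Psi=\sum_{i=1}^3N_ix_i$ with unknown $N_i\in M_2(k)$ and solve the resulting linear conditions $\sum_{i,j}M_iN_jx_ix_j=\Omega_2E_2$ in the basis of $S_2$. This gives equations analogous to \eqref{r_1} and \eqref{r_2} with $M_0=N_0=0$. Those reduce to $M_jN_k-M_kN_j\in$ (something forced to vanish) and $M_jN_k+M_kN_j=0$, which is solvable with $N_i=-M_i$ up to a sign twist in one variable, because the choice of coefficients $1+\a_3$, etc., is exactly what makes the $x_0$-terms cancel (this is where centrality of $\Omega_2$ is used). If $\Phi\Psi=\Omega_2E_2$ holds, then $\Psi\Phi=\Omega_2E_2$ as well, since $\Omega_2$ is central and $S$ is a domain.

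\emph{Non-isomorphism.} Suppose $(\Phi,\Psi)\cong(\Psi,\Phi)$. By Lemma \ref{lem.pp3}, $\Psi=P\Phi Q$ for some $P,Q\in\GL_2(k)$. Then $(\Phi,\Psi)$ would be isomorphic to some $(\Theta,\Theta)$ by the lemma preceding Lemma \ref{lem.isns}, giving $\Theta\in M_2(S_1)$ with $\Theta^2=\Omega_2E_2$. But $\Omega_2$ has vanishing $x_0^2$-coefficient, while Lemma \ref{lem_singular} and its proof show that such a $\Theta$ forces the coefficients $M_i$ to be very rigid. Alternatively, I would redo Cases~1--2 of the proof of Lemma \ref{lem_singular} with $\b_0=0$ to show that no $\Theta$ with $\Theta^2=\Omega_2E_2$ exists: $M_0^2=0$ forces $M_0$ nilpotent, and equations \eqref{r_1}, \eqref{r_2} then collapse. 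This contradiction shows that $A$ is smooth (not singular, by Lemma \ref{lem.isns}(1)) and standard by Lemma \ref{lem.isns}(2).
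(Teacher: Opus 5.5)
\textbf{The construction step fails as stated.} The relations $r_i^{\pm}$ only let you eliminate the six monomials $x_0x_i$ and $x_ix_0$ ($i=1,2,3$). The relation space meets the span of the remaining monomials trivially, so the nine monomials $x_ix_j$ with $1\le i,j\le 3$ are linearly independent in $S_2$. Hence, for $\Phi=\sum_{i=1}^3M_ix_i$ and $\Psi=\sum_{i=1}^3N_ix_i$, the identity $\Phi\Psi=\Omega_2E_2$ is equivalent to $M_iN_i=c_iE_2$ together with $M_iN_j=0$ for $i\neq j$, where $(c_1,c_2,c_3)=(1+\alpha_3,1+\alpha_1\alpha_3,1-\alpha_1)$. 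Since every $c_i\neq 0$, $M_1$ is invertible, so $M_1N_2=0$ forces $N_2=0$, contradicting $M_2N_2=c_2E_2$. So no sign twist of $N_i=-M_i$ can work, and in your ansatz there are no $x_0$-terms for the coefficients of $\Omega_2$ to cancel.

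Allowing $x_0$ in the partner does not save your particular $\Phi$ either. Its coefficient matrices pairwise anticommute and satisfy $M_i^2=c_iE_2$, so the partner must have $N_i=M_i$. After rewriting $2x_ix_0=(1-\alpha_i)x_jx_k-(1+\alpha_i)x_kx_j$, the coefficients of $x_jx_k$ and $x_kx_j$ give $M_jM_k=-\frac{1-\alpha_i}{2}M_iN_0=-\frac{1+\alpha_i}{2}M_iN_0$. Hence $N_0=0$ and then $M_jM_k=0$, which is absurd.

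The paper's pair has a different shape. Its $x_0$-free factor $\Psi$ has coefficient matrices squaring to scalars in the ratio $(1-\alpha_3):(1+\alpha_1\alpha_3):(1+\alpha_1)$, not $c_1:c_2:c_3$. Its partner $\Phi$ carries $-2\sqrt{-1}(\gamma_1/\gamma_2)x_0$ on the diagonal. The swap is then excluded simply because $\dim\Phi=4\neq 3=\dim\Psi$ (Lemma~\ref{lem.lisc1}).

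\textbf{Your second half, once actually proved, makes the construction unnecessary.} Since $\Omega_2$ is irreducible, $A$ is singular, standard smooth, or non-standard smooth. By Lemma~\ref{lem.isns}(1) and (3), the first and last cases each yield some $(\Theta,\Theta)\in\LMF^2_S(\Omega_2)$. So it suffices to show that no $\Theta=\sum_{i=0}^3M_ix_i\in M_2(S_1)$ satisfies $\Theta^2=\Omega_2E_2$, and then conclude via (3) rather than (2).

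Saying ``the equations collapse'' is not yet an argument, and Case~1 of Lemma~\ref{lem_singular} does not transfer, since it diagonalizes $M_0$. The proof is short, though. One gets $M_0^2=0$, $M_i^2=c_iE_2$, and \eqref{r_1}, \eqref{r_2}.
\begin{itemize}
\item If $M_0=0$, these relations give $M_jM_k=0$, which is impossible.
\item Otherwise, conjugate to $M_0=\begin{pmatrix}0&1\\0&0\end{pmatrix}$. The trace of \eqref{r_2} gives $\operatorname{tr}(M_0M_i)=(M_i)_{21}=0$, so every $M_i$ is upper triangular. The diagonal of \eqref{r_1} then gives $(M_j)_{11}(M_k)_{11}=0$, contradicting $(M_j)_{11}^2=c_j\neq 0$.
\end{itemize}
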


\begin{proof}
	Recall that $\Omega_2 = (1+\alpha_3)x_1^2
	+ (1+\alpha_1\alpha_3)x_2^2
	+ (1-\alpha_1)x_3^2 $.
	If we define
	\begin{align*}
	&\Phi=
	\begin{pmatrix}
		-2\sqrt{-1}\,\dfrac{\c_1}{\c_2}x_0+\dfrac{1-\alpha_1}{\c_2}x_3
		&
		(1+\alpha_3)x_1-\sqrt{-1}\,\dfrac{1+\alpha_1\alpha_3}{\c_1}x_2
		\\[8pt]
		(1+\alpha_3)x_1+\sqrt{-1}\,\dfrac{1+\alpha_1\alpha_3}{\c_1}x_2
		&
		-2\sqrt{-1}\,\dfrac{\c_1}{\c_2}x_0-\dfrac{1-\alpha_1}{\c_2}x_3
	\end{pmatrix},\\
	&\Psi=
	\begin{pmatrix}
		\c_2x_3 & x_1-\sqrt{-1} \c_1x_2\\[4pt]
		x_1+\sqrt{-1} \c_1x_2 & -\c_2x_3
	\end{pmatrix}, \text{ where }	\c_1^2=\frac{1+\alpha_1\alpha_3}{1-\alpha_3},\ \c_2^2=\frac{1+\alpha_1}{1-\alpha_3},
	\end{align*}
then we can check that $\Phi\Psi=\Psi\Phi=\Omega _2E_2$ by brute-force computations so that $(\Phi, \Psi), (\Psi, \Phi)\in \LMF^2_{S(\a_1, \a_2, \a_3)}(\Omega_2)$. (Here we use identities such as
\begin{align*}
& 2x_0x_1=(1+\alpha_1)x_2x_3-(1-\alpha_1)x_3x_2, \\
& 2x_0x_2=(1+\alpha_2)x_3x_1-(1-\alpha_2)x_1x_3, \\
& 2x_0x_3=(1+\alpha_3)x_1x_2-(1-\alpha_3)x_2x_1,
\end{align*}
and $\alpha_1+\alpha_2+\alpha_3+\alpha_1\alpha_2\alpha_3=0$.)
	Since $\dim \Phi=4\neq \dim \Psi=3$, $\mathrm{Coker}\, \Phi \not \cong \mathrm{Coker}\, \Psi $ by Lemma \ref{lem.lisc1}.  It follows that $(\Phi, \Psi)\not \cong (\Psi, \Phi)$, so $S(\a_1, \a_2, \a_3)/(\Omega_2)$ is standard smooth by Lemma \ref{lem.isns}.
\end{proof}

\begin{remark} \label{rem.det} For $\Phi, \Psi$ in the proof of the above lemma, $\cV(\det \Phi^\natural)\neq \cV(\det\Psi^\natural)$.  In fact, we can check that $\cV(\det \Phi^\natural)\subset \PP^3$ is smooth while $\cV(\det\Psi^\natural)\subset \PP^3$ is singular.
\end{remark}

By Lemma \ref{lem_3variables}, we obtain the following result.

\begin{proposition}\label{prop.Om2}
Let $S=S(\alpha_1,\alpha_2,\alpha_3)$ be a $4$-dimensional Sklyanin algebra and $0\neq  f = \b_0x_0^2 + \b_1 x_1^2 + \b_2 x_2^2 + \b_3 x_3^2\in k\Omega_1 + k\Omega_2 $.
If $\b_0\b_1\b_2\b_3=0$, then $S/(f)\cong S/(\Omega_2)$.  In this case, $S/(f)$  is standard smooth.
\end{proposition}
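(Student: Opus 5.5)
By Lemma \ref{lem_del} (2), if $\b_0\b_1\b_2\b_3=0$ then, up to a nonzero scalar, $f$ is one of $\Omega_2$, $\delta_1(\Omega_2)$, $\delta_2(\Omega_2)$ or $\delta_3(\Omega_2)$. Scaling $f$ does not change the ideal $(f)$, so we may assume $f=\Omega_2$ or $f=\delta_i(\Omega_2)$ for some $i\in\{1,2,3\}$. The first case is trivial.

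For the second case, write $\delta:=\delta_i$. By Lemma \ref{lem_auto4skl}, $\delta$ is a graded algebra automorphism of $S$. It therefore maps the two-sided ideal $(\Omega_2)=\Omega_2S$ onto $\delta(\Omega_2)S=(f)$. Consequently $\delta$ induces a graded algebra isomorphism $S/(\Omega_2)\to S/(f)$, which gives $S/(f)\cong S/(\Omega_2)$.

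The remaining claim is that $S/(f)$ is standard smooth. By Lemma \ref{lem_3variables}, $S/(\Omega_2)$ is standard smooth, and both properties are invariant under graded algebra isomorphism. For smoothness this is clear, since a graded isomorphism induces an equivalence of $\tails$. For standardness, take the matrix factorizations from the proof of Lemma \ref{lem_3variables}. Applying $\delta$ entrywise gives $(\delta(\Phi),\delta(\Psi))\in\LMF^2_S(f)$ with $\delta(\Phi)\delta(\Psi)=\delta(\Psi)\delta(\Phi)=fE_2$. Since $\delta$ restricts to a linear automorphism of $S_1$, we have $\dim\delta(\Phi)=4\neq 3=\dim\delta(\Psi)$. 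Lemma \ref{lem.lisc1} then shows that $(\delta(\Phi),\delta(\Psi))\not\cong(\delta(\Psi),\delta(\Phi))$.

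To conclude we invoke Lemma \ref{lem.isns} (2). Its hypotheses hold here: $f$ is central in $S$ because $\delta$ preserves the center, and $f$ is irreducible by Proposition \ref{prop_cenirr}. There is no real obstacle in this argument; the only point requiring care is checking these hypotheses for $f=\delta_i(\Omega_2)$.
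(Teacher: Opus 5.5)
Your proof is correct and follows the paper's argument. Lemma \ref{lem_del}(2) identifies $f$ with some $\delta_i(\Omega_2)$ up to scalar, the automorphism $\delta_i$ gives $S/(f)\cong S/(\Omega_2)$, and Lemma \ref{lem_3variables} supplies standard smoothness; your extra step of transporting $(\Phi,\Psi)$ by $\delta_i$ and reapplying Lemmas \ref{lem.lisc1} and \ref{lem.isns} just makes explicit the invariance of standardness under graded isomorphism, which the paper leaves implicit.
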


\begin{proof}
By Lemma \ref{lem_del}, $\b_0 \b_1 \b_2 \b_3 = 0$ if and only if $f = \delta_i(\Omega_2)$ for some $0 \leq i \leq 3$, where $\delta_0 := \operatorname{id}_S$. Since each $\delta_i$ is a graded algebra automorphism of $S$, we have an isomorphism of graded algebras $S/(f) \cong S/(\Omega_2)$, which is standard smooth by Lemma \ref{lem_3variables}.
\end{proof}

We finally give a characterization of the smoothness of $S/(f)$ for a central Sklyanin quadric. According to Theorem \ref{thm.10.2}, there are four singular quadrics $S/(f)$ for each $S$.  We identify such four singular quadrics and show that they are isomorphic to each other, that is, there is a unique singular quadric $S/(f)$ up to  isomorphism for each fixed $S$.  By this characterization, we will show that all smooth $S/(f)$ are standard.

\begin{theorem}\label{thm_sing}
Let $S=S(\alpha_1,\alpha_2,\alpha_3)$ be a $4$-dimensional Sklyanin algebra and $0\neq f = \b_0x_0^2 + \b_1 x_1^2 + \b_2 x_2^2 + \b_3 x_3^2\in k\Omega_1 + k\Omega_2 $.
\begin{enumerate}
\item The following conditions are equivalent.
\begin{enumerate}
\item $S/(f)$ is singular.
\item $f$ is one of $\Omega_1$, $\d_1(\Omega_1)$, $\d_2(\Omega_1)$,
or $\d_3(\Omega_1)$ up to scalar.
\item $S/(f) \cong S/(\Omega_1)$.
\end{enumerate}
\item  If $S/(f)$ is smooth, then $S/(f)$ is standard.
\end{enumerate}
\end{theorem}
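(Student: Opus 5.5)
We split according to whether $\b_0\b_1\b_2\b_3=0$ and use Lemma \ref{lem.isns}; this is possible because $f$ is central (Remark \ref{lem.skce}) and irreducible (Proposition \ref{prop_cenirr}).

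\emph{Case $\b_0\b_1\b_2\b_3=0$.} Proposition \ref{prop.Om2} shows that $S/(f)\cong S/(\Omega_2)$ is standard smooth. Here (a) and (c) both fail; (c) fails because $S/(\Omega_1)$ turns out to be singular, as shown next. Condition (b) also fails: the four listed elements have all coefficients nonzero, since $\a_i\neq 0$. So the equivalences hold in this case, and (2) holds as well.

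\emph{Case $\b_0\b_1\b_2\b_3\neq 0$.} First we show (b)$\Rightarrow$(c)$\Rightarrow$(a). The implication (b)$\Rightarrow$(c) holds because each $\d_i$ is a graded automorphism of $S$, so $S/(\d_i(\Omega_1))\cong S/(\Omega_1)$. For (c)$\Rightarrow$(a) it suffices to show that $S/(\Omega_1)$ is singular. The proof of Lemma \ref{lem_singular} gives $\Phi$ with $\Phi^2=\Omega_1E_2$, so $(\Phi,\Phi)\in\LMF^2_S(\Omega_1)$. This factorization is indecomposable: a decomposition would give rank-one linear factors of $\Omega_1$, contradicting irreducibility.

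Suppose $S/(\Omega_1)$ were smooth. Lemma \ref{lem.isns} then forces it to be non-standard, because $\Coker\Phi\cong\Omega(\Coker\Phi)(1)$ and in the standard case these two modules are non-isomorphic. In the non-standard case we need two non-isomorphic factorizations $(\Phi,\Phi)\not\cong(\Psi,\Psi)$. Applying the previous lemma on self-matched factorizations (the one citing \cite{CKMW}), each $X\in\mathbb M$ with $\Omega X(1)\cong X$ is represented by some $(\Psi,\Psi)$ with $\Psi^2=\Omega_1E_2$. The uniqueness part of Lemma \ref{lem_singular} says $\Psi$ is conjugate to $\pm\Phi$. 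The sign could a priori produce a second class, so we must check that $-\Phi\sim\Phi$ via $P,Q\in\GL_2(k)$. For the explicit matrix this holds: conjugating by $\begin{pmatrix}0&1\\-1&0\end{pmatrix}$ and transposing are both available within $\sim$, and one verifies that some product $P\Phi Q$ equals $-\Phi$. Hence $\mathbb M$ has a single class, contradicting Proposition \ref{prop.sism}(1). So $S/(\Omega_1)$ is singular.

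Next we show (a)$\Rightarrow$(b). If $S/(f)$ is singular, Lemma \ref{lem.isns}(1) gives $(\Phi,\Phi)\in\LMF^2_S(f)$, that is, $\Phi^2=fE_2$. Lemma \ref{lem_singular} then places $f$ in the list.

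Finally we prove (2). Suppose $S/(f)$ is smooth and non-standard. Lemma \ref{lem.isns}(3) gives some $(\Phi,\Phi)\in\LMF^2_S(f)$. In the first case this is impossible, since $S/(\Omega_2)$ is standard and standardness is preserved under isomorphism. In the second case Lemma \ref{lem_singular} puts $f$ in the list, so $S/(f)$ is singular by the above, a contradiction.

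The main obstacle is the sign issue: showing that $\Phi$ and $-\Phi$ give isomorphic factorizations, so that the uniqueness in Lemma \ref{lem_singular} really yields just one element of $\mathbb M$. I would settle this by an explicit computation with the Clifford-type matrix. If that computation fails, a fallback is to compare with Theorem \ref{thm.10.2}, which says there are exactly four singular quadrics when $|\t|=\infty$. That route, however, would not cover finite $|\t|$.
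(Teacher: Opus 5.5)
Your proposal is correct and follows the paper's proof. Like the paper, you use Proposition \ref{prop.Om2} to dispose of $\b_0\b_1\b_2\b_3=0$ and then combine Lemma \ref{lem_singular} with Lemma \ref{lem.isns} for (a)$\Rightarrow$(b), (c)$\Rightarrow$(a) and part (2); your (c)$\Rightarrow$(a) is just an unpacked form of the paper's one-line argument. The sign issue you flag as the main obstacle is not one, since $-\Phi=(-E_2)\Phi E_2$ gives $\Phi\sim-\Phi$ and hence $(\Phi,\Phi)\cong(-\Phi,-\Phi)$ directly by Lemma \ref{lem.pp3}; transposition is not part of $\sim$, and no fallback to Theorem \ref{thm.10.2} is needed.
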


\begin{proof}
(1) (a) $\Rightarrow$ (b):
If $S/(f)$ is singular, then there exists $(\Phi, \Phi)\in \LMF_S^2(f)$ by Lemma \ref{lem.isns}(1), and
$\b_0\b_1\b_2\b_3\neq 0$ by Proposition \ref{prop.Om2}, so (b)
holds from  Lemma~\ref{lem_singular}.

(b) $\Rightarrow$ (c): Since each $\delta_i$ is a graded algebra
automorphism of $S$, we have an isomorphism of graded algebras
$S/(f) \cong S/(\Omega_1)$ in each case.

(c) $\Rightarrow$ (a): By Lemmas \ref{lem_singular} and \ref{lem.isns},
we see that $S/(f) \cong S/(\Omega_1)$ is singular.

\smallskip
(2) If  $S/(f)$ is smooth but non-standard, then there exists $(\Phi, \Phi)\in \LMF_S^2(f)$  by Lemma \ref{lem.isns}(3), and $\b_0\b_1\b_2\b_3\neq 0$ by Proposition \ref{prop.Om2}, so (b)
holds by Lemma~\ref{lem_singular} again.  However, in this case, $S/(f)$ is singular by (1), which is a contradiction.
Therefore, if $S/(f)$ is smooth, then it is standard.
\end{proof}

\end{document}